\newcommand{\R}{{\mathbb R}}
\newcommand{\E}{{\mathbb E}}
\newcommand{\N}{{\mathbb N}}
\DeclareMathOperator{\NtG}{NtG}
\newtheorem{theorem}{Theorem}[section]
\newtheorem{proposition}[theorem]{Proposition}
\newtheorem{lemma}[theorem]{Lemma}
\newtheorem{remark}[theorem]{Remark}
\newtheorem*{remark*}{Remark}
\newtheorem*{condition*}{Condition}
\numberwithin{equation}{section}
\newcounter{rcnt}[section]
\renewcommand{\thercnt}{(\roman{rcnt})}
\newcommand{\rem}[1]{}
\begin{document}
\sloppy

\title{
	Admissibility of the usual confidence set \\
	for the mean of a univariate or bivariate normal population: 
	The unknown-variance case
}

\date{(revised manuscript)}

\author{Hannes Leeb (University of Vienna) and\\
	Paul Kabaila (La Trobe University)}

\maketitle

\begin{abstract}
In the Gaussian linear regression model (with unknown mean and variance),
we show that the standard confidence set
for one or two regression coefficients 
is admissible in the sense of \cite{Jos69a}.
This solves a  long-standing open problem in mathematical statistics,
and this has important implications on the performance
of modern inference procedures post-model-selection or post-shrinkage,
particularly in situations where the number of parameters is larger
than the sample size.
As a technical contribution of independent interest, 
we introduce a new class of
conjugate priors for the Gaussian location-scale model.
\end{abstract}



%

\section{Introduction and overview}
\label{S1}

Among the most widely used statistical methods are
inference procedures based on the Gaussian linear regression model
(with unknown mean and variance).
Studentized confidence intervals, in particular,
are a staple tool in applied analyses.
It is therefore important to know whether such simple
inference procedures are optimal.
This need not be the case. 
Indeed, any confidence interval for the variance
that is based only on the sample variance 
is sub-optimal and one can construct a confidence
interval that is uniformly shorter 
while having the same coverage probability;
see \cite{Gou91a} as well as  \cite{Tat59a}.
In this paper, we show that the usual standard  confidence
set for one or two regression coefficients can not be improved
in this way, because
this confidence set is
admissible in the sense of \cite{Jos69a}:
Consider a confidence procedure, for one or two regression coefficients,
whose measure (i.e., length or
area) is at least as small as that of the standard procedure,
and whose minimal coverage probability is at least as large as that
of the standard procedure.
Our results entail that such a procedure must coincide with the 
standard procedure almost everywhere;
see Theorem~\ref{t3} for details.
This extends earlier findings of \cite{Kab10a}. 
This also entails that no confidence interval, whose minimal coverage 
probability
equals that of the standard confidence set, can improve over
the standard set in terms of length.
[Valid confidence intervals that improve over the standard
interval in terms of length locally, e.g., at some
point or in some region of sample space, 
at the expense of increased length
elsewhere, are studied by 
\cite{Bro95a, Far08a, Kab09b}.]

Our results provide insights into the Stein phenomenon
for set-estimation
in the unknown-variance case. To explain, consider first
the known-variance case with 
independent observations from a $p$-dimensional 
normal distribution whose
mean vector is unknown and whose covariance matrix is the identity.
By sufficiency, this can be reduced to the Gaussian location model where 
$x\sim N(\mu, I_p)$ with unknown parameter $\mu\in\R^p$.
And recall the Stein phenomenon for point-estimation, i.e., the fact that
the standard estimator $x$ for the mean is admissible
with respect to squared error loss
if $p=1$ or $p=2$, 
while this standard estimator is inadmissible and can be
dominated by shrinkage estimators if $p\geq 3$; cf.
\cite{Ste56a} and \cite{Jam61a}.
It is well-known that the Stein phenomenon for point-estimation  
here also carries over to set-estimation: The standard
confidence set for the mean, i.e., a ball of fixed radius centered at $x$,
is admissible if $p=1$ or $p=2$, and this confidence set
can be  dominated if $p \geq 3$, e.g., by so-called re-centered
confidence sets; cf. \cite{Bro66a} and \cite{Jos67a,Jos69a}.

Now consider the corresponding unknown-variance case, i.e.,
independent observations from a $p$-dimensional normal distribution whose
mean is unknown and whose covariance matrix is an unknown
positive multiple of the identity. Assume that the number of observations,
that we denote by $n$, exceeds $p$.
By sufficiency, this can be reduced to the Gaussian location-scale model where
$x \sim N(\mu,\sigma^2 I_p)$ and $s /\sigma^2 \sim \chi^2_m$ independent
of $x$ (for $m=n-p \geq 1$).
Here, the data are $(x,s)$ and
the unknown parameters are $\mu\in \R^p$ and $\sigma^2>0$.
For point-estimation, it is easy to extend the Stein phenomenon from
the Gaussian location model to the Gaussian location-scale model.
It is thus tempting to conjecture, for set-estimation, 
that the Stein phenomenon 
can be extended in a similar fashion.
But in spite of strong numerical support reported in several of
the references that follow, an appropriate analytic result has
yet to be established.
\citet[p.~205]{Sal06a} notes that `{\em the confidence set with
unknown variance turned out to be a difficult problem that is open
for solution}.'
Partial results on dominance of the standard confidence set by
re-centered confidence sets in large dimensions are reported by 
\cite{	Ber80a,
	Car90a,
	Cas83a,
	Cas87c,
	Che88a,
	Hwa94a,
	Rob90a,
	Sam05a}.

Our results allow us to extend the results on the usual confidence set
in small dimensions from the known-variance case to the unknown-variance
case: We obtain that for $p=1$ and $p=2$ 
the standard confidence set 
in the Gaussian location-scale model
is admissible 
in the class
of all (possibly randomized) confidence sets 
in the sense
of \cite{Jos69a}; cf. Theorem~\ref{t1}.
In particular, we show that
any confidence set, that performs at least as well
as the standard confidence set in terms of minimal coverage probability
and in terms of measure, 
coincides with the standard confidence set almost surely.

Our findings also have important conceptual implications on the ongoing
development of valid inference procedures post-model-selection
or post-shrinkage. 
Consider first a Gaussian linear regression model with $d$ explanatory
variables, $n$ observations, and assume that, say,
a confidence interval is desired for a particular regression coefficient
or a linear contrast.  
[Similar considerations apply for two-dimensional quantities of interest.]
Assume for now that $n>d$.
Our results show that
non-standard confidence intervals obtained, e.g., 
through model selection or shrinkage,
that maintain a user-specified minimal coverage probability, can not be
smaller than
the corresponding standard interval.
And if a non-standard  confidence interval is smaller than the 
standard interval, then its minimal coverage probability must be smaller
than that of the standard interval.
This provides some vindication for 
recently proposed 
inference procedures
in this area  that are valid but  conservative,
in the sense that the resulting confidence sets 
have coverage probabilities at or above the nominal level,
and that these confidence sets  are larger
than the standard confidence set 
based on the overall model with positive probability. See
\cite{And09a, Poe09a, Poe10a, Sch14a}; and also the 
discussion in \cite{Lee14d}.
The situation becomes even more pronounced if the number of
parameters in the overall model exceeds the sample-size, i.e.,
in situations where $n < d$. Procedures relying on model selection or
shrinkage are particularly attractive in these situations. 
But here, any non-standard confidence interval can be compared
to the (infeasible) standard confidence interval that is constructed
from a sample of size $d+1$. In particular, we see that
non-standard confidence intervals here either 
have small minimal coverage probabilities
or  they must be quite large with positive probability.
In view of this, our results might also be seen as providing further support
to approaches to inference in `small-$n$-large-$d$' scenarios
that focus on non-standard quantities of interest instead of
the  underlying true parameter,
as in the works of 
\cite{Ber13a, Gen07a, Lee05a, Lee09a, Lee14b, Lee14a, Bac15a}.

The paper is organized as follows.
In Section~\ref{S2} we present our main findings, i.e.,
Theorems~\ref{t1} and Theorem~\ref{t3}, which
are both derived from a technical core result that we give in 
Proposition~\ref{t2}.
We in fact establish, for the standard procedure,
a version of admissibility that is stronger than
admissibility as considered by \cite{Jos69a}; see Remark~\ref{joshi}
(and also Remarks~\ref{Rproof}\ref{Rproof.2} and~\ref{Rstrong}).
The proof of Proposition~\ref{t2} is  lengthy and
is hence presented in a top-down fashion.
Section~\ref{S3} contains a high-level version of the
proof. 
Our main arguments rely on
a new class of conjugate priors for the Gaussian location-scale
model, which is presented in Section~\ref{S4}. 
Further technical details and proofs are collected in the 
supplementary material.
[In the supplementary material,
Appendix~\ref{AA} contains some technical remarks for Section~\ref{S2},
and the proofs of Proposition~\ref{PRD1} and Proposition~\ref{PRD2} are given
in Appendix~\ref{AB} and~\ref{AC}, respectively. Further auxiliary
results are presented in Appendix~\ref{AD}.]

\section{Main results}
\label{S2}

\subsection{The location-scale model}
\label{S2.1}
Throughout fix integers $p\geq 1$ and $m\geq 1$, and consider
independent random variables $x$ and $s$ with values in $\R^p$ and
$(0,\infty)$, respectively, so that  $x \sim N(\mu,\sigma^2 I_p)$
and so that $s/\sigma^2 \sim \chi^2_m$. 
The unknown parameters here are the mean $\mu\in\R^p$
and the variance $\sigma^2>0$.
In the following, we will write $E_{\mu,\sigma^2}[\cdots]$ for
the expectation of functions of $x$ and $s$
under the parameters $\mu$ and $\sigma^2$.

To study (possibly randomized) confidence sets for $\mu$
that depend on $x$ and $s$, we follow \cite{Jos69a} and
define a confidence procedure for $\mu$ as a measurable function
$\phi(x,s,\mu)$ from the product space $\R^p\times(0,\infty)\times \R^p$
to the unit  interval.
If $\phi$ takes on only the values $0$ and $1$, then it
can be interpreted as
a non-randomized confidence set with 
$\phi(x,s,\mu)=1$ if $\mu$ is included and
$\phi(x,s,\mu)=0$ otherwise;
the corresponding confidence set is $ C(x,s) = \{ \mu: \phi(x,s,\mu)=1\}$.
The standard procedure will be denoted by $\phi_0$ and is given
by $\phi_0(x,s,\mu)=1$ if $\|x-\mu\|^2 < c s/m$ and
$\phi_0(x,s,\mu)=0$ otherwise, for some $c>0$.
In general, a confidence procedure
$\phi$ can be interpreted as a randomized confidence set
with $\phi(x,s,\mu)$ equal to the conditional probability of
including $\mu$ given $x$ and $s$.
For any confidence procedure $\phi(x,s,\mu)$
and for fixed parameters $\mu$ and $\sigma^2$, note that
the coverage probability of $\phi$ is given by
$E_{\mu,\sigma^2}[ \phi(x,s,\mu)]$;
the (Lebesgue-) measure of $\phi$  is
denoted by $\upsilon(\phi(x,s,\cdot))$ and is
defined as $\upsilon(\phi(x,s,\cdot))= \int \phi(x,s,\mu) d \mu$.

\begin{theorem}\label{t1}
Fix $p \in \{1,2\}$ and $m\geq 1$, 
and recall that $\phi_0 = \phi_0(x,s,\mu)$ denotes
the standard confidence procedure.
Let $\phi_1=\phi_1(x,s,\mu)$ be any confidence procedure 
that performs at least as well as $\phi_0$ in terms of
expected measure conditional on $s$, and in terms of coverage probability; i.e,
$\phi_1$ satisfies
\begin{align}\label{t1.1}
E_{\mu,\sigma^2} \Big[ \upsilon( \phi_1(x,s,\cdot)) \;\Big\|\; s\Big] & 
\quad\leq \quad
E_{\mu,\sigma^2} \Big[ \upsilon( \phi_0(x,s,\cdot))\;\Big\|\; s\Big] 
\end{align}
almost everywhere and
\begin{align}\label{t1.2}
E_{\mu,\sigma^2} \Big[\phi_1(x,s,\mu)\Big] & 
\quad \geq \quad 
E_{\mu,\sigma^2} \Big[\phi_0(x,s,\mu)\Big]
\end{align}
for each $(\mu,\sigma^2) \in\R^p \times (0,\infty)$.
Then $\phi_1 = \phi_0$ almost everywhere.
\end{theorem}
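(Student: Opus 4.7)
My plan is to recast admissibility as a Bayesian uniqueness argument and exploit the conjugate-prior machinery promised in Section~\ref{S4}. For any proper prior $\pi$ on $(\mu,\sigma^2)$ and any nonnegative weight $\lambda(\mu,\sigma^2)$, taking the full expectation of \eqref{t1.1} in $s$ and integrating both \eqref{t1.1} and \eqref{t1.2} against $\pi$ and $\lambda\,\pi$ respectively, then subtracting, yields
\[
\int \Bigl( E_{\mu,\sigma^2}\bigl[\upsilon(\phi_1(x,s,\cdot)) - \upsilon(\phi_0(x,s,\cdot))\bigr] \;-\; \lambda(\mu,\sigma^2)\, E_{\mu,\sigma^2}\bigl[\phi_1(x,s,\mu) - \phi_0(x,s,\mu)\bigr] \Bigr)\, d\pi \;\leq\; 0.
\]
This is the integrated difference in a Lagrangian-type risk trading off set volume against non-coverage, and it is the object on which the whole argument hinges.

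Next I would try to choose $\pi$ and $\lambda$ so that $\phi_0$ is the pointwise Bayes procedure for this risk. After swapping integrals via Fubini, the displayed inequality can be written as $\int (\phi_1(x,s,\mu) - \phi_0(x,s,\mu))\, K_{\pi,\lambda}(x,s,\mu)\, dx\,ds\,d\mu \leq 0$ for a kernel $K_{\pi,\lambda}$ computed from the $(x,s)$-likelihood and the prior. The goal is to arrange that $K_{\pi,\lambda}$ has the same sign as $\phi_0(x,s,\mu) - \tfrac12$, i.e.\ is positive exactly when $\|x-\mu\|^2 < cs/m$; pointwise optimality of $\phi_0$ then forces equality in the displayed inequality and hence $\phi_1 = \phi_0$ almost everywhere against the $(x,s,\mu)$-measure induced by $\pi$. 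A limiting argument along a sequence of priors $\pi_n$ whose $\mu$-marginals approach Lebesgue measure would extend the conclusion to almost everywhere on $\R^p \times (0,\infty) \times \R^p$; the dimension restriction $p\in\{1,2\}$ enters precisely here, exactly as in the known-variance Brown--Joshi argument, because Lebesgue measure on $\R^p$ serves as a ``permissible'' improper prior only in dimensions at most two.

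The main obstacle is the simultaneous presence of the nuisance scale $\sigma^2$ and the fact that the standard confidence set has \emph{random} radius proportional to $\sqrt{s/m}$: any prior used in the limiting argument must be tractable under both factors of the likelihood ($N(\mu,\sigma^2 I_p)$ and $\chi^2_m$) while admitting an improper limit that reproduces $\phi_0$. Standard normal-inverse-gamma priors are not quite flexible enough, which is precisely why the authors introduce the new conjugate-prior class in Section~\ref{S4}. Once those priors and the resulting kernel identification are in place, the technical core Proposition~\ref{t2} packages the limit and the required integrability conditions, and Theorem~\ref{t1} follows from Proposition~\ref{t2} after unpacking the conditional expectation in \eqref{t1.1}.
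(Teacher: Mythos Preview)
Your final sentence is exactly right and is in fact the entire content of the paper's proof of Theorem~\ref{t1}: the theorem is the special case of Proposition~\ref{t2} in which $\phi_1$ happens not to depend on its last argument. So the reduction step is correct and matches the paper.

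Where your outline of the underlying argument (i.e., of Proposition~\ref{t2}) diverges from the paper is in two linked places, and both are substantive rather than cosmetic.

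\textbf{The weight must depend on the data $s$.} You propose a Lagrange multiplier $\lambda(\mu,\sigma^2)$ depending only on parameters, and you integrate \eqref{t1.1} unconditionally in $s$. The paper instead weights the \emph{volume} term by $r_\kappa(cs/m\,|\,\lambda)$, the Gaussian posterior density of $\mu$ evaluated on the boundary $\|\mu-\mu_\kappa\|^2=cs/m$; this weight is a function of $s$ (and of $\lambda=1/\sigma^2$). That $s$-dependence is precisely what makes the conditional Bayes procedure a ball of the \emph{data-driven} radius $\sqrt{cs/m}$ for every $(x,s,\lambda)$; see the proof of Proposition~\ref{bayesprocedure}. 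It is also the reason the hypothesis \eqref{t1.1} is stated conditionally on $s$: one needs $E[\,r_\kappa(cs/m|\lambda)\,\upsilon(\phi_1)\,]\le E[\,r_\kappa(cs/m|\lambda)\,\upsilon(\phi_0)\,]$, and this follows from \eqref{t1.1} only because the inequality holds $s$-by-$s$. With a weight that does not see $s$, the resulting Bayes level set is governed by the marginal posterior of $\mu$ given $(x,s)$ (a $t$-type density), and there is no choice of $\lambda(\mu,\sigma^2)$ that makes its level sets coincide with $\{\|\mu-x\|^2<cs/m\}$ for all $(x,s)$. Remark~\ref{Rstrong} in the paper makes the same point from the other side: replacing the conditional expectation in \eqref{t2.1} by an unconditional one breaks Proposition~\ref{t2}.

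\textbf{$\phi_0$ is not Bayes for any proper prior.} You write that the goal is to choose $\pi$ so that the kernel $K_{\pi,\lambda}$ has the same sign as $\phi_0-\tfrac12$, i.e., so that $\phi_0$ is the pointwise Bayes rule, and then conclude $\phi_1=\phi_0$ from equality in the risk inequality. For the proper priors actually used (the $\NtG(p,0,\kappa,-p/2,0,\epsilon)$ family with $\kappa>0$), the Bayes rule is $\phi_\kappa$, the ball centred at $\mu_\kappa=x/(1+\kappa)$, not at $x$; see Proposition~\ref{bayesprocedure}. The argument therefore cannot proceed by forcing equality for a single proper prior. Instead the paper runs a genuine Blyth-type comparison: from \eqref{t2.1}--\eqref{t2.2} one gets $Q_{\kappa,\epsilon}(L_\kappa(\phi_1))\le Q_{\kappa,\epsilon}(L_\kappa(\phi_0))$ (inequality \eqref{key}); Proposition~\ref{PRD1} shows the gap $Q_{\kappa,\epsilon}(L_\kappa(\phi_0))-Q_{\kappa,\epsilon}(L_\kappa(\phi_\kappa))$ is $O(\kappa)$ under $P_{\kappa,\epsilon}$, hence $O(\kappa^{1-p/2})$ under $Q_{\kappa,\epsilon}$; and Proposition~\ref{PRD2} shows that if $\phi_1\neq\phi_0$ on a set of positive measure then $Q_{\kappa,\epsilon}(L_\kappa(\phi_1))-Q_{\kappa,\epsilon}(L_\kappa(\phi_0))>0$ for small $\kappa,\epsilon$, a contradiction. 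The restriction $p\in\{1,2\}$ enters through this rate comparison (the gap under $Q_{\kappa,\epsilon}$ stays bounded only when $p\le 2$), which is morally the ``permissible prior'' phenomenon you allude to, but the mechanism is the explicit $F$-distribution identity of Proposition~\ref{PRD1} rather than an abstract permissibility statement. The case $p=2$ is delicate because the gap does not vanish but merely stays bounded, and the bulk of Appendix~\ref{AC} (Lemmata~\ref{N1}--\ref{N3}) is devoted to it.
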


\begin{remark}\normalfont\label{joshi}
Following \cite{Jos69a}, the standard procedure $\phi_0$ is admissible
if any other procedure $\phi_1$ that satisfies
$$
 \upsilon( \phi_1(x,s,\cdot))  
\quad\leq \quad
 \upsilon( \phi_0(x,s,\cdot))
$$
almost everywhere, and that satisfies \eqref{t1.2} for each
$(\mu,\sigma^2)$, is such that $\phi_1=\phi_0$ almost everywhere.
Theorem~\ref{t1} entails that the standard procedure
is admissible, because \eqref{t1.1} follows from the relation in the
preceding display. 
In view of this,
our theorem delivers a stronger
version of admissibility, because condition \eqref{t1.1} is weaker
than the condition expressed in the preceding display.
The same applies, mutatis mutandis,  to
the standard procedure in the linear regression model,
which is discussed in the following section; cf. Theorem~\ref{t3}.
\end{remark}

Both Theorem~\ref{t1} and Theorem~\ref{t3}, which is presented in
the next section, are consequences of the following technical result, where
we consider a function $\phi_1$ that also depends on $\sigma^2$,
i.e., $\phi_1 = \phi_1(x,s,\mu,\sigma^2)$, so that $\phi_1$ is
a measurable function from 
$\R^p\times (0,\infty) \times \R^p\times (0,\infty)$ 
to $[0,1]$. 
Because of its dependence on $\sigma^2$, such a function $\phi_1$ need
not correspond to a (feasible) confidence procedure.
Of course, the standard procedure $\phi_0$ can also be viewed as
a function $\phi_0(x,s,\mu,\sigma^2)$ (that is constant in its last
argument).
Similarly to before, we set, e.g., 
$\upsilon(\phi_1(x,s,\cdot,\sigma^2)) = \int \phi_1(x,s,\mu,\sigma^2) d\mu$.

\begin{proposition}\label{t2}
Fix $p \in \{1,2\}$ and $m\geq 1$,
and let $\phi_0 = \phi_0(x,s,\mu,\sigma^2)$ 
and $\phi_1 = \phi_1(x,s,\mu,\sigma^2)$ 
be as in the
preceding paragraph.
If $\phi_1$ satisfies
\begin{align}\label{t2.1}
E_{\mu,\sigma^2} \Big[ 
\upsilon( \phi_1(x,s,\cdot,\sigma^2)) \;\Big\|\;s \Big] & 
\quad\leq \quad
E_{\mu,\sigma^2} \Big[ 
\upsilon( \phi_0(x,s,\cdot,\sigma^2))\;\Big\|\;s \Big] 
\end{align}
almost everywhere and
\begin{align}
\label{t2.2}
E_{\mu,\sigma^2} \Big[\phi_1(x,s,\mu,\sigma^2)\Big] & 
\quad \geq \quad 
E_{\mu,\sigma^2} \Big[\phi_0(x,s,\mu,\sigma^2)\Big]
\end{align}
for each $(\mu,\sigma^2) \in\R^p \times (0,\infty)$,
then $\phi_1 = \phi_0$ almost everywhere.
\end{proposition}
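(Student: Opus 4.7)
My plan is a Bayes-type admissibility argument driven by the new conjugate priors of Section~\ref{S4}. Fix an auxiliary prior $\pi(\mu,\sigma^2)$ on the parameter space and a non-negative function $\lambda(\sigma^2)$. Multiplying \eqref{t2.1} by the density of $s$ given $\sigma^2$ weighted by $\pi(\mu,\sigma^2)$ and integrating over $(s,\mu,\sigma^2)$, and multiplying \eqref{t2.2} by $\lambda(\sigma^2)\pi(\mu,\sigma^2)$ and integrating over $(\mu,\sigma^2)$, Fubini consolidates the two hypotheses into the single inequality
\[
\int (\phi_1 - \phi_0)(x,s,\mu,\sigma^2) \bigl[ \lambda(\sigma^2)\, p(x,s\mid\mu,\sigma^2)\, \pi(\mu,\sigma^2) \;-\; \bar q(x,s,\sigma^2) \bigr]\, d(x,s,\mu,\sigma^2) \;\geq\; 0,
\]
where $p$ denotes the joint density of $(x,s)$ given $(\mu,\sigma^2)$ and $\bar q(x,s,\sigma^2) := \int p(x,s\mid\mu',\sigma^2)\, \pi(\mu',\sigma^2)\, d\mu'$ is the $\mu$-marginal of $p\pi$.

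The next step is to choose $\pi$ and $\lambda$ so that $\phi_0$ is the pointwise Bayes maximizer of $\phi\cdot[\lambda p\pi - \bar q]$ over $\phi\in[0,1]$, i.e.\
\[
\phi_0(x,s,\mu,\sigma^2) \;=\; \mathbf{1}\bigl\{\lambda(\sigma^2)\, p(x,s\mid\mu,\sigma^2)\, \pi(\mu,\sigma^2) > \bar q(x,s,\sigma^2)\bigr\}
\]
almost everywhere. Concretely, the ``posterior ratio'' $p\pi/\bar q$, viewed as a function of $\mu$ with $(x,s,\sigma^2)$ held fixed, must be spherically symmetric about $x$ with level sets whose $\lambda^{-1}(\sigma^2)$-cut equals the ball of radius $\sqrt{cs/m}$ defining $\phi_0$. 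When such a matching is in place, the integrand $(\phi_1-\phi_0)[\lambda p\pi - \bar q]$ is non-positive pointwise, so the non-negative integral above forces it to vanish almost everywhere, which yields $\phi_1=\phi_0$ a.e.\ on the set $\{\lambda p\pi\neq \bar q\}$. This is the role of the new conjugate priors of Section~\ref{S4}.

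To upgrade the a.e.\ conclusion from ``with respect to the measure carried by a single $\pi$'' to ``with respect to Lebesgue measure on $\R^p\times(0,\infty)\times\R^p\times(0,\infty)$'', I would vary the hyperparameters within the conjugate family so that the associated sets $\{\lambda p\pi\neq \bar q\}$ collectively exhaust the parameter space up to a null set; the residual ``boundary'' $\{\lambda p\pi=\bar q\}$ corresponds to the measure-zero sphere $\|x-\mu\|^2 = cs/m$ and is handled routinely.

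The main obstacle is the construction of the conjugate priors themselves. To make $\phi_0$ exactly Bayes, the prior $\pi(\mu,\sigma^2)$ must reshape the Gaussian kernel $\exp(-\|x-\mu\|^2/(2\sigma^2))$ so that its $\mu$-level sets are balls of radius $\sqrt{cs/m}$, a radius governed by $s$ rather than $\sigma^2$, with $\lambda$ allowed to depend only on $\sigma^2$ (since \eqref{t2.2} provides only a pointwise-in-$(\mu,\sigma^2)$ inequality). This forces a non-trivial coupling between the $\mu$- and $\sigma^2$-components of $\pi$, ruling out the standard conditionally-conjugate product form. Moreover, the resulting density must still integrate to a finite (or sufficiently localized) measure, and it is in this integrability requirement that the dimension restriction $p\in\{1,2\}$ enters, paralleling Joshi's known-variance analysis where admissibility of the ball estimator fails for $p\geq 3$ precisely because the uniform prior on $\mu$ ceases to be approximable by proper priors.
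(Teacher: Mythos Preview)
Your single-prior, exact-Bayes plan has a structural gap: for the posterior ratio $p\pi/\bar q$ to be spherically symmetric about $x$ for \emph{every} $x$, the $\mu$-marginal of $\pi$ must be translation-invariant, hence improper. No proper prior makes $\phi_0$ the exact pointwise maximizer; with the improper limit, your displayed integral inequality is a difference of two infinite integrals and does not directly force the integrand to vanish. A second issue is your weight $\lambda(\sigma^2)$: because \eqref{t2.1} is conditional on $s$, you may (and must) multiply by an $s$-dependent weight before integrating. Without that, you cannot arrange for the Bayes level set to be the ball of radius $\sqrt{cs/m}$, a radius that depends on $s$ and not on $\sigma^2$.

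The paper's argument is a genuine Blyth limiting argument, not a single-prior one. For each $\kappa>0$ one has a \emph{proper} prior whose Bayes procedure $\phi_\kappa$ is the ball of radius $\sqrt{cs/m}$ centered at $\mu_\kappa=x/(1+\kappa)$, not at $x$; the loss carries an $(s,\lambda,\kappa)$-dependent weight $r_\kappa(cs/m\mid\lambda)$ on the volume term, which is what forces the $s$-dependent radius. One then computes the (proper) Bayes risk gap $P_{\kappa,\epsilon}(L_\kappa(\phi_0))-P_{\kappa,\epsilon}(L_\kappa(\phi_\kappa))=F_{p,m}\!\big(\tfrac{c}{p}(1+\kappa)\big)-F_{p,m}\!\big(\tfrac{c}{p}\big)=O(\kappa)$, while the normalization of the prior is $O(\kappa^{p/2})$. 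For $p=1$ the un-normed gap vanishes and the standard Blyth contradiction closes. For $p=2$ the un-normed gap converges to a nonzero constant, so the naive Blyth step fails; the paper then needs a substantially more delicate analysis (decomposing the risk difference over annular regions in $\mu$, bounding each piece via H{\"o}lder and convexity estimates) to show that if $\phi_1\neq\phi_0$ on a set of positive measure the un-normed risk difference is nevertheless eventually positive. Your proposal does not identify or address this $p=2$ obstruction, which is where most of the work lies.
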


Theorem~\ref{t1} obviously is a special case of Proposition~\ref{t2}.
We will see that Proposition~\ref{t2} can also be used to deal
with the Gaussian linear regression model.

\subsection{The linear regression model}
\label{S2.2}

Consider the linear regression model $y = Z \beta + u$, 
where $Z$ is a fixed $n\times d$ matrix
of rank $d<n$ and $u \sim N(0,\sigma^2 I_n)$. The unknown parameters
here are $\beta \in \R^d$ and $\sigma^2>0$.
Write $\hat{\beta}$ and $\hat{\sigma}^2$ for the usual unbiased
estimators for $\beta$ and $\sigma^2$, i.e.,
$\hat{\beta} = (Z'Z)^{-1} Z' y$ and
$\hat{\sigma}^2 = \| y - Z \hat{\beta}\|^2/(n-d)$.
The expectation of functions of $\hat{\beta}$ and $\hat{\sigma}^2$
under the true parameters $\beta$ and $\sigma^2$ will be denoted
by $\E_{\beta,\sigma^2}[\cdots]$.

Fix an integer $p \leq d$ and consider the standard confidence set
for the first $p$ components of $\beta$, which we denote by $\beta_{(p)}$.
To this end, partition 
$\beta$ as $\beta' = (\beta_{(p)}', \beta_{(\neg p)}')$
and partition 
$\hat{\beta}$ conformably as
$\hat{\beta}' = (\hat{\beta}_{(p)}', \hat{\beta}_{(\neg p)}')$
in case $p<d$; in case $p=d$, we set  
$\hat{\beta}_{(p)} = \hat{\beta}$ and
$\beta_{(p)} = \beta$.
Finally, write 
$\sigma^2 S_{(p)}$ for  the  
covariance matrix of
$\hat{\beta}_{(p)}$, i.e., $S_{(p)}$ denotes the
leading $p\times p$ submatrix of $(Z'Z)^{-1}$.
As (possibly randomized) confidence procedures, we consider
measurable functions $\varphi(\hat{\beta}, \hat{\sigma}^2, \beta_{(p)})$
from the product space $\R^d\times (0,\infty)\times \R^p$
to the unit interval.
The standard confidence procedure here will be denoted by
$\varphi_0$ and is defined by
$\varphi_0(\hat{\beta}, \hat{\sigma}^2, \beta_{(p)}) = 1$ if
$\|S_{(p)}^{-1/2}(\beta_{(p)} - \hat{\beta}_{(p)})\|^2 < c 
\hat{\sigma}^2$
and $\varphi_0(\hat{\beta}, \hat{\sigma}^2, \beta_{(p)}) = 0$ otherwise.
For any confidence procedure 
$\varphi(\hat{\beta},\hat{\sigma}^2,\beta_{(p)})$ 
and for fixed parameters $\beta$ and $\sigma^2$,
the coverage probability of $\varphi$ is given by
$\E_{\beta,\sigma^2}[ \varphi(\hat{\beta},\hat{\sigma}^2,\beta_{(p)})]$;
and the measure of $\varphi$ is given by
$\upsilon(\varphi(\hat{\beta},\hat{\sigma}^2,\cdot)) =
\int \varphi(\hat{\beta},\hat{\sigma}^2,\beta_{(p)}) d \beta_{(p)}$.

\begin{theorem}\label{t3}
Fix $p \in \{1,2\}$ and recall that 
$\varphi_0 = \varphi_0(\hat{\beta},\hat{\sigma}^2, \beta_{(p)})$ 
denotes the standard confidence procedure. Let 
$\varphi_1 = \varphi_1(\hat{\beta},\hat{\sigma}^2, \beta_{(p)})$ be any
confidence procedure that performs at least as well as $\varphi_0$
in terms of expected measure conditional on $\hat{\sigma}^2$,
and in terms of coverage probability;
that is, $\varphi_1$ satisfies
\begin{align}\label{t3.1}
\E_{\beta,\sigma^2}\Big[
\upsilon( 
	\varphi_1(\hat{\beta},\hat{\sigma}^2,\cdot)) \;\Big\|\; \hat{\sigma}^2
		\Big]& 
\quad\leq \quad
\E_{\beta,\sigma^2}\Big[
\upsilon( 
	\varphi_0(\hat{\beta},\hat{\sigma}^2,\cdot)) \;\Big\|\; \hat{\sigma}^2
		\Big]
\end{align}
almost everywhere and
\begin{align}\label{t3.2}
\E_{\beta,\sigma^2} \Big[\varphi_1(\hat{\beta},\hat{\sigma}^2,\beta_{(p)})
	\Big] & 
\quad \geq \quad 
\E_{\beta,\sigma^2} \Big[\varphi_0(\hat{\beta},\hat{\sigma}^2,\beta_{(p)})
	\Big]
\end{align}
for each $\beta\in \R^d$ and each $\sigma^2 >0$. 
Then $\varphi_1 = \varphi_0$ almost everywhere.
\end{theorem}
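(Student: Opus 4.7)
My plan is to deduce Theorem \ref{t3} from Proposition \ref{t2} by reducing the regression model to the location-scale model, after absorbing the nuisance coefficients $\hat{\beta}_{(\neg p)}$ via Gaussian averaging. Set $m = n - d$ and partition $(Z'Z)^{-1}$ conformably with $(\beta_{(p)}',\beta_{(\neg p)}')'$, so that its leading $p\times p$ block is $S_{(p)}$; write $S_{12}$ and $S_{22}$ for the off-diagonal and lower-right blocks, and set $\Sigma_\gamma := S_{22} - S_{12}'S_{(p)}^{-1}S_{12}$. With $\gamma := \beta_{(\neg p)} - S_{12}'S_{(p)}^{-1}\beta_{(p)}$, the vector $\tilde{\beta} := \hat{\beta}_{(\neg p)} - S_{12}'S_{(p)}^{-1}\hat{\beta}_{(p)}$ is $N(\gamma,\sigma^2\Sigma_\gamma)$-distributed and independent of $(\hat{\beta}_{(p)},\hat{\sigma}^2)$, so $(\beta_{(p)},\gamma,\sigma^2)$ freely reparametrizes $(\beta,\sigma^2)$. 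Under the identifications $x = S_{(p)}^{-1/2}\hat{\beta}_{(p)}$, $\mu = S_{(p)}^{-1/2}\beta_{(p)}$, $s = m\hat{\sigma}^2$ the location-scale setup of Section \ref{S2.1} is recovered, and the function $\phi_0(x,s,\mu,\sigma^2) := \varphi_0(\hat{\beta},\hat{\sigma}^2,\beta_{(p)})$ coincides with the standard procedure of that section.

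For each $\gamma^* \in \R^{d-p}$ I will define
\[
\phi_1^{(\gamma^*)}(x,s,\mu,\sigma^2) \;=\; E\bigl[\varphi_1\bigl(S_{(p)}^{1/2}x,\, W,\, s/m,\, S_{(p)}^{1/2}\mu\bigr)\bigr], \qquad W \sim N\bigl(\gamma^* + S_{12}'S_{(p)}^{-1}S_{(p)}^{1/2}x,\, \sigma^2\Sigma_\gamma\bigr),
\]
a $[0,1]$-valued measurable function of the kind allowed by Proposition \ref{t2}. Fubini (and, for the measure, the Jacobian $|S_{(p)}|^{-1/2}$ from $\beta_{(p)}\leftrightarrow\mu$) shows that $E_{\mu,\sigma^2}[\phi_1^{(\gamma^*)}(x,s,\mu,\sigma^2)] = \E_{\beta,\sigma^2}[\varphi_1(\hat{\beta},\hat{\sigma}^2,\beta_{(p)})]$ and $E_{\mu,\sigma^2}[\upsilon(\phi_1^{(\gamma^*)}(x,s,\cdot,\sigma^2))\,|\,s] = |S_{(p)}|^{-1/2}\,\E_{\beta,\sigma^2}[\upsilon(\varphi_1(\hat{\beta},\hat{\sigma}^2,\cdot))\,|\,\hat{\sigma}^2]$ at the regression parameter value $(\beta_{(p)},\,\gamma^*+S_{12}'S_{(p)}^{-1}\beta_{(p)},\,\sigma^2)$. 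The analogous identities for $\phi_0$ and $\varphi_0$ are immediate because $\varphi_0$ does not involve $\hat{\beta}_{(\neg p)}$. Hypotheses \eqref{t3.1} and \eqref{t3.2} therefore yield \eqref{t2.1} and \eqref{t2.2} for the pair $\phi_0,\phi_1^{(\gamma^*)}$, and Proposition \ref{t2} gives $\phi_1^{(\gamma^*)} = \phi_0$ almost everywhere, for every $\gamma^* \in \R^{d-p}$.

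To complete the argument I will invert the averaging. By Fubini in $\gamma^*$, the family of a.e.\ equalities $\phi_1^{(\gamma^*)} = \phi_0$ consolidates into joint a.e.\ vanishing of $\phi_1^{(\gamma^*)}(x,s,\mu,\sigma^2) - \phi_0(x,s,\mu,\sigma^2)$ on $\R^p\times(0,\infty)\times\R^p\times(0,\infty)\times\R^{d-p}$. Thus, for almost every $(x,s,\mu,\sigma^2)$, the bounded function
\[
t \;\mapsto\; \varphi_1\bigl(\hat{\beta}_{(p)},\,t+S_{12}'S_{(p)}^{-1}\hat{\beta}_{(p)},\,\hat{\sigma}^2,\,\beta_{(p)}\bigr) \;-\; \varphi_0\bigl(\hat{\beta},\hat{\sigma}^2,\beta_{(p)}\bigr)
\]
has its convolution with the $N(\gamma^*,\sigma^2\Sigma_\gamma)$ density vanishing for a.e.\ $\gamma^*\in\R^{d-p}$, and hence (by continuity of the convolution in $\gamma^*$) identically. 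Completeness of the Gaussian location family then forces this function to vanish almost everywhere in $t$, and reverting the substitution $\hat{\beta}_{(\neg p)} = t+S_{12}'S_{(p)}^{-1}\hat{\beta}_{(p)}$ together with a final application of Fubini delivers $\varphi_1 = \varphi_0$ almost everywhere.

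The main obstacle will be this last step: Proposition \ref{t2} produces only an a.e.\ equality for each fixed $\gamma^*$, so one must assemble a single joint null set in $\gamma^*$ (via Fubini on the enlarged parameter space) and then invert the Gaussian averaging to pass from the averaged statement $\phi_1^{(\gamma^*)} = \phi_0$ to the pointwise a.e.\ statement $\varphi_1 = \varphi_0$. Everything else is routine Fubini and change-of-variable bookkeeping.
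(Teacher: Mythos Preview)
Your proposal is correct and follows essentially the same route as the paper: average out the nuisance coordinates $\hat{\beta}_{(\neg p)}$ to obtain a $\sigma^2$-dependent function, apply Proposition~\ref{t2}, then use Fubini in the nuisance parameter together with completeness of the Gaussian location family to invert the averaging. Your parametrization via $\gamma = \beta_{(\neg p)} - S_{12}'S_{(p)}^{-1}\beta_{(p)}$ and the independent residual $\tilde{\beta}$ is a cosmetic variant of the paper's direct conditioning on $(\hat{\beta}_{(p)},\hat{\sigma}^2)$ for fixed $\beta_{(\neg p)}$, but the substance is identical.
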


\begin{proof}
Consider first the case where $d>p$, and
fix $\beta_{(\neg p)}$ for the moment.
The standard procedure $\varphi_0$ depends on $\hat{\beta}$
only through $\hat{\beta}_{(p)}$, i.e.,
$\varphi_0 = 
\varphi_0(\hat{\beta}_{(p)}, \hat{\sigma}^2, \beta_{(p)})$.
Consider now the conditional mean of $\varphi_1$ given $\hat{\beta}_{(p)}$
and $\hat{\sigma}^2$,
which is a function of $\hat{\beta}_{(p)}$
and $\hat{\sigma}^2$, and which also depends on
$\beta_{(p)}$, $\beta_{(\neg p)}$ and $\sigma^2$
(because the law of $\hat{\beta}_{(\neg p)}$ given 
$\hat{\beta}_{(p)}$ and $\hat{\sigma}^2$ depends on these
parameters). Set
\begin{equation}\label{pt3.1}
\varphi_{1|\beta_{(\neg p)}}
	(\hat{\beta}_{(p)}, \hat{\sigma}^2, \beta_{(p)}, \sigma^2)
\quad=\quad
\E_{\beta,\sigma^2}\left[\left.
	\varphi_1
	(\hat{\beta}, \hat{\sigma}^2, \beta_{(p)})
	\right\| \hat{\beta}_{(p)}, \hat{\sigma}^2\right].
\end{equation}
Note that this conditional mean also depends on $\sigma^2$;
the implications of this are further discussed in Remark~\ref{r1}.
It is not difficult to show that $\varphi_{1|\beta_{(\neg p)}}$
is a measurable function from 
$\R^p\times (0,\infty)\times \R^p\times (0,\infty)$
to the unit interval; see Remark~\ref{TR1} for details.

Set $x =  S_{(p)}^{-1/2}\hat{\beta}_{(p)}$,
$\mu = S_{(p)}^{-1/2}\beta_{(p)}$ and $s = (n-d) \hat{\sigma}^2$,
and
define $\phi_0$ and $\phi_1$ by
\begin{align*}
\phi_0(x,s,\mu,\sigma^2)& \quad=\quad
	\varphi_{0}\left(
	\hat{\beta}_{(p)}, \hat{\sigma}^2, \beta_{(p)}
	\right) \qquad
	\text{ and}
\\
\phi_{1}(x,s,\mu,\sigma^2)& \quad=\quad
	\varphi_{1|\beta_{(\neg p)}}\left(
	\hat{\beta}_{(p)}, \hat{\sigma}^2, \beta_{(p)}, \sigma^2
	\right).
\end{align*}
It is easy to see
that $\phi_0$ and $\phi_1$ satisfy
the assumptions of Proposition~\ref{t2}; cf. Remark~\ref{TR2}.

Proposition~\ref{t2} entails that $\phi_0 = \phi_1$ almost everywhere or,
equivalently, that $\varphi_0 = \varphi_{1|\beta_{(\neg p)}}$ almost
everywhere. More precisely, we have 
\begin{equation}\label{pt3.2}
\varphi_0(\hat{\beta}_{(p)}, \hat{\sigma}^2, \beta_{(p)}) 
\quad=\quad
\varphi_{1|\beta_{(\neg p)}}(\hat{\beta}_{(p)}, \hat{\sigma}^2, \beta_{(p)},
	\sigma^2)
\end{equation}
for almost all $(\hat{\beta}_{(p)}, \hat{\sigma}^2, \beta_{(p)},\sigma^2)
\in \R^p\times (0,\infty)\times \R^p\times (0,\infty)$.
Dropping the assumption that $\beta_{(\neg p})$ is fixed we can conclude
that the relation in the preceding display holds
for almost all $(\hat{\beta}_{(p)}, \hat{\sigma}^2, \beta,\sigma^2) 
\in \R^p\times (0,\infty) \times \R^d\times (0,\infty)$.

Fix $\hat{\beta}_{(p)}$, $\hat{\sigma}^2$, $\beta_{(p)}$, and $\sigma^2$
for the moment, so that the \eqref{pt3.2} holds for almost all
$\beta_{(\neg p)}$, and recall that $\varphi_{1|\beta_{(\neg p)}}$ is 
the conditional mean of a function of $\hat{\beta}_{(\neg p)}$ given 
$\hat{\beta}_{(p)}$ and $\hat{\sigma}^2$.
Because the conditional distributions of $\hat{\beta}_{(\neg p)}$
given $\hat{\beta}_{(p)}$ and $\hat{\sigma}^2$
can be viewed as a full-rank exponential family parameterized
by $\beta_{(\neg p)}$, and because $\hat{\beta}_{(\neg p)}$ is a
complete statistic for that family, it follows from \eqref{pt3.2} that
\begin{equation}\label{pt3.3}
\varphi_0(\hat{\beta}_{(p)}, \hat{\sigma}^2, \beta_{(p)})
\quad=\quad
\varphi_1(\hat{\beta}, \hat{\sigma}^2, \beta_{(p)})
\end{equation}
holds 
for almost all $(\hat{\beta}, \hat{\sigma}^2, \beta_{(p)},\sigma^2 )
\in \R^d \times (0,\infty) \times \R^p\times(0,\infty)$. For details, see 
Remark~\ref{TR3}.
This completes the proof in case $d>p$.

In the case where $d=p$, we argue as in the second paragraph and in the
first two sentences of the the third
paragraph of the proof, with $\hat{\beta}_{(p)}$ and $\beta_{(p)}$ 
now set 
equal to $\hat{\beta}$ and $\beta$, respectively, with
$S_{(p)}= (Z'Z)^{-1}$, and with 
$\varphi_{1|\beta_{(\neg p)}}(\hat{\beta}_{(p)}, \hat{\sigma}^2,
\beta_{(p)}, \sigma) = 
\varphi_1(\hat{\beta}, \hat{\sigma}^2, \beta)$.
\end{proof}

\begin{remark}\normalfont
\label{r1}
In the known-variance case, corresponding results for the 
linear regression model follow directly from corresponding results
for the location model; in other words, a known-variance
version of Theorem~\ref{t3}  follows from a known-variance
version of Theorem~\ref{t1} (by arguing as in Section 4 of \citealt{Kab10a}).
This is not so when the variance is unknown, and a more
general result, namely Proposition~\ref{t2}, is needed here.
See also the first paragraph in the proof of Theorem~\ref{t3}.
\end{remark}

\section{Proof of Proposition~\ref{t2}}
\label{S3}

We re-parameterize the variance as
$\sigma^2 = 1/\lambda$, so that the procedures in Proposition~\ref{t2}
are $\phi_0 = \phi_0(x,s,\mu,\lambda)$ and
$\phi_1 = \phi_1(x,s,\mu,\lambda)$, and so that
expectations under the parameters $\mu$ and $\lambda$ are written
as $E_{\mu,\lambda}[\cdots]$.
Following \cite{Bly51a},  we consider a hierarchical model with
a certain prior on the parameters $\mu\in \R^p$ and $\lambda>0$,
so that $x$ and $s$ are distributed as described earlier
conditional on $(\mu,\lambda)$.
As the prior, we use a particular instance of 
a new conjugate prior for the Gaussian location-scale model.
This new conjugate prior, which we call the
normal-truncated-gamma prior, is introduced in
Section~\ref{S4.1} in its general form
along with some basic properties of that prior.
The particular instance of the normal-truncated-gamma prior,
which we use in the following, is denoted by 
$\NtG(p,0,\kappa, -p/2,0,\epsilon)$
in the notation of Section~\ref{S4.1}.
Using this prior, we obtain a joint density 
for $x,s,\mu,\lambda$ that depends on the hyper-parameters $\kappa>0$
and $\epsilon>0$,
that we denote by $p_{\kappa,\epsilon}(x,s,\mu,\lambda)$,
and that we factorize as 
$$
	p_{\kappa,\epsilon}(x,s,\mu,\lambda) \quad =\quad
	p_{\kappa,\epsilon}(x,s)
	\;\;
	p_{\kappa,\epsilon}(\lambda\|x,s) 
	\; \;
	p_{\kappa,\epsilon}(\mu\|x,s,\lambda)  
	.
$$
In the preceding display, $p_{\kappa,\epsilon}(x,s)$ 
denotes the marginal density
of $(x,s)$, $p_{\kappa,\epsilon}(\lambda\|x,s)$ 
denotes the conditional marginal
density of $\lambda$ given $(x,s)$, and 
$p_{\kappa,\epsilon}(\mu\|x,s,\lambda)$
denotes the conditional density of $\mu$ given $(x,s,\lambda)$,
all under the $\NtG(p,0,\kappa, -p/2,0,\epsilon)$-prior.
Lastly, set $p_{\kappa,\epsilon}(\mu,\lambda\|x,s) = 
p_{\kappa,\epsilon}(\lambda\|x,s) p_{\kappa,\epsilon}(\mu\|x,s,\lambda)$.
It will always be clear from the context how expressions
like $p_{\kappa,\epsilon}(\cdots)$ are to be interpreted.
Explicit expressions for the densities in the preceding display,
and for related quantities,
are given in Section~\ref{S4.2}. For the level of discussion
maintained here, it suffices to point out that
$p_{\kappa,\epsilon}(\mu\|x,s,\lambda)$ 
is the density of the $N(\mu_\kappa,(\lambda(1+\kappa))^{-1}I_p)$-distribution
at $\mu$. This density is does not depend on $\epsilon$ and $s$,
is spherically symmetric in $\mu$
around $\mu_\kappa = x/(1+\kappa)$, is maximized at $\mu=\mu_\kappa$,
and decreases  as $\|\mu-\mu_\kappa\|$ increases.
We will also write $p_{\kappa,\epsilon}(\mu\|x,s,\lambda)$ as
$$
p_{\kappa,\epsilon}(\mu\| x,s,\lambda)\quad=\quad 
r_\kappa(\|\mu-\mu_\kappa\|^2 |  \lambda)
$$
in the following.

Consider an improper (un-normed) version of the prior
that we denote by $q_{\kappa,\epsilon}(\mu,\lambda)$
and that is of the form
$q_{\kappa,\epsilon}(\mu,\lambda) = 
	K_{\kappa,\epsilon} p_{\kappa,\epsilon}(\mu,\lambda)$
for 
$$
K_{\kappa,\epsilon} \quad=\quad
	\frac{2}{p}\;
	\Gamma\left(\frac{m}{2}\right)
	\left(\frac{2 \pi}{\epsilon}\;
	\frac{1+\kappa}{\kappa}\right)^\frac{p}{2}.
$$
The constant $K_{\kappa,\epsilon}$ is such
that $q_{\kappa,\epsilon}(\mu,\lambda)$ converges, as $\kappa\to 0$ while
$\epsilon>0$ is fixed,
to the density of a $\sigma$-finite measure on $\R^p\times (0,\infty)$,
that we denote by $q_{0,\epsilon}(\mu,\lambda)$.
Again, explicit formulae are given in Section~\ref{S4.2}.
When $\epsilon>0$ is fixed and $\kappa\to 0$, note that
$K_{\kappa,\epsilon}$ is of the order $O(\kappa^{-p/2})$.
Also consider the function
$$
	q_{\kappa,\epsilon}(x,s,\mu,\lambda) \quad=\quad 
	K_{\kappa,\epsilon}
	p_{\kappa,\epsilon}(x,s,\mu,\lambda).
$$
Obviously, we can factorize 
$q_{\kappa,\epsilon}(x,s,\mu,\lambda)$ as
$$
q_{\kappa,\epsilon}(x,s,\mu,\lambda) \quad=\quad 
q_{\kappa,\epsilon}(x,s) \;\; 
p_{\kappa,\epsilon}(\lambda\|x,s) \;\; p_{\kappa,\epsilon}(\mu\|x,s,\lambda)
$$
for 
$q_{\kappa,\epsilon}(x,s) = K_{\kappa,\epsilon} p_{\kappa,\epsilon}(x,s)$.
While the proper prior $p_{\kappa,\epsilon}(\mu,\lambda)$ is defined only
for $\kappa>0$ and $\epsilon>0$, 
the improper prior $q_{\kappa,\epsilon}(\mu,\lambda)$,
the  function $q_{\kappa,\epsilon}(x,s)$, 
the conditional densities 
$p_{\kappa,\epsilon}(\lambda\|x,s)$ 
and
$p_{\kappa,\epsilon}(\mu\|x,s,\lambda)$ or, equivalently,
$r_\kappa(\|\mu-\mu_\kappa\|^2|\lambda)$,
as well $\mu_\kappa$ are well-defined
also in case $\kappa=0$ and $\epsilon>0$ 
(by the formulae in Section~\ref{S4.2} and in view of
the preceding conventions).
In particular, $\mu_0=x$.

In the following, posterior means will be denoted by
expressions of the form $P_{\kappa,\epsilon}(\cdots\|x,s)$, i.e., 
$P_{\kappa,\epsilon}(\cdots\|x,s) =
	\iint\cdots p_{\kappa,\epsilon}(\mu,\lambda\|x,s)\,d\mu\,d\lambda$.
The corresponding proper and improper prior means 
are $P_{\kappa,\epsilon}(\cdots) = 
	\iint P_{\kappa,\epsilon}(\cdots\|x,s) 
		p_{\kappa,\epsilon}(x,s) \,d x\,d s$
(in case $\kappa>0$)
and $Q_{\kappa,\epsilon}(\cdots) = 
	\iint P_{\kappa,\epsilon}(\cdots\|x,s) 
	q_{\kappa,\epsilon}(x,s) \,d x\,d s$,
respectively.
Similar notation will be used to denote other conditional means
like $P_{\kappa,\epsilon}(\cdots \|\mu,\lambda)$
and $P_{\kappa,\epsilon}(\cdots\|s,\mu,\lambda)$.
Note that the latter two expressions coincide with (frequentist)
means and conditional means given $s$, respectively,
in the notation of Section~\ref{S2}; i.e.,
$P_{\kappa,\epsilon}(\cdots \|\mu,\lambda)
	= E_{\mu,\lambda}[ \cdots]$ 
and
$P_{\kappa,\epsilon}(\cdots \|s, \mu,\lambda)
	= E_{\mu,\lambda}[ \cdots\| s]$.
We use the symbols $x$, $s$, $\mu$ and
$\lambda$ do denote both random variables and integration variables.
It will always be clear from the context how these symbols are to be
interpreted.

Let $\phi = \phi(x,s,\mu,\lambda)$  be a function
from $\R^p\times (0,\infty)\times \R^p\times (0,\infty)$ to the
unit interval that is measurable, and set
$\upsilon( \phi(x,s,\cdot,\lambda)) = \int \phi(x,s,\mu,\lambda) d \mu$.
For the standard procedure, note that $\upsilon(\phi_0(x,s,\cdot,\lambda)$,
i.e., the volume of a ball of radius $(c s/m)^{1/2}$ in $\R^p$, 
equals $(\pi c s/m)^{p/2}/ \Gamma(p/2+1)$.
For each $\kappa\geq 0$ and $\epsilon>0$, we consider the  
loss-function 
$$
L_\kappa (\phi) \quad=\quad
	r_\kappa( c s/m|\lambda) 
	\upsilon(\phi(x,s,\cdot,\lambda)) - \phi(x,s,\mu,\lambda),
$$
where $c$ governs the diameter of the standard confidence set,
i.e., $\phi_0(x,s,\mu)$ equals one if $\|\mu-x\|^2 < c s/m$ and 
zero otherwise.  
Note that $L_\kappa(\phi_0)$ is equal to the volume of the standard
confidence set, weighted by $r_\kappa(c s/m|\lambda)$, minus the indicator
on the event that $\mu$ is covered.
Of course, $L_\kappa(\phi)$
depends on $x$, $s$, $\lambda$ and $\mu$, but this dependence is not shown
explicitly in the notation for the sake of brevity.
If $\kappa>0$, the corresponding risk is
$P_{\kappa,\epsilon} (L_\kappa (\phi))$.
This risk is well-defined for each $\kappa>0$ and $\epsilon>0$,
and it satisfies 
$P_{\kappa,\epsilon} (L_\kappa (\phi))\geq -1$, 
because $L_\kappa(\phi)$
is the difference of two non-negative functions where the second one,
namely $\phi(x,s,\mu,\lambda)$, is bounded from above by $1$.

\begin{proposition} \label{bayesprocedure}
Fix $p\geq 1$ and $m\geq 1$.
For each $\kappa\geq 0$, each $\epsilon>0$ and
each $(x,s) \in \R^p\times(0,\infty)$,
the posterior risk 
$P_{\kappa,\epsilon}(L_\kappa(\phi)\|x,s)$ is 
minimized for $\phi=\phi_\kappa$, where $\phi_\kappa(x,s,\mu,\lambda) = 1$
if $\|\mu-\mu_\kappa\|^2 < c s/m$ and $\phi_\kappa(x,s,\mu,\lambda)=0$ 
otherwise.
In particular, the minimizer $\phi_\kappa$ is independent of $\lambda$
and $\epsilon$.
If $\kappa>0$, then $\phi_\kappa$ is a (proper) Bayes procedure 
whose risk satisfies 
$-1 \leq P_{\kappa,\epsilon}(L_\kappa(\phi_\kappa)) \leq 0$.
Moreover, $\phi_0$ is a generalized  Bayes procedure with 
$Q_{0,\epsilon}(L_\kappa(\phi_0))\leq 0$.
\end{proposition}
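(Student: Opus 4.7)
The plan is to minimize the posterior risk pointwise in $\mu$ (and $\lambda$), after separating the two terms of $L_\kappa(\phi)$ using the fact that one of them depends on $\mu$ only through the dummy integration variable in $\upsilon(\phi(x,s,\cdot,\lambda))$, while the other is pointwise in $\mu$.

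First, I would write the posterior risk by factorizing $p_{\kappa,\epsilon}(\mu,\lambda\|x,s) = p_{\kappa,\epsilon}(\lambda\|x,s)\,p_{\kappa,\epsilon}(\mu\|x,s,\lambda) = p_{\kappa,\epsilon}(\lambda\|x,s)\,r_\kappa(\|\mu-\mu_\kappa\|^2|\lambda)$ and swapping integration orders. Because the first summand of $L_\kappa$ does not depend on the argument $\mu$ against which the posterior integrates (only on the dummy variable in the volume integral), a quick rewrite gives
\begin{equation*}
P_{\kappa,\epsilon}(L_\kappa(\phi)\|x,s) \;=\;
\int\!\!\left[\int \phi(x,s,\mu,\lambda)\bigl(r_\kappa(cs/m|\lambda) - r_\kappa(\|\mu-\mu_\kappa\|^2|\lambda)\bigr)d\mu\right] p_{\kappa,\epsilon}(\lambda\|x,s)\,d\lambda.
\end{equation*}
At this point minimization over $\phi(x,s,\mu,\lambda)\in[0,1]$ can be done pointwise: choose $\phi=1$ where the bracket is negative and $\phi=0$ where it is positive. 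Since $r_\kappa(\cdot|\lambda)$ is strictly decreasing in its first argument (it arises from a spherically symmetric normal density maximized at $\mu_\kappa$, as recalled in the excerpt), the sign of the bracket depends only on whether $\|\mu-\mu_\kappa\|^2$ is less than or greater than $cs/m$. Hence the minimizer is exactly $\phi_\kappa$, and it is automatically independent of $\lambda$ (the threshold $cs/m$ is $\lambda$-free) and of $\epsilon$ (which enters only through $p_{\kappa,\epsilon}(\lambda\|x,s)$, a non-negative weight). On the Lebesgue-null set where equality holds, $\phi_\kappa$ is still a valid minimizer.

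For the risk bounds when $\kappa>0$: the trivial procedure $\phi\equiv 0$ yields $L_\kappa(0)\equiv 0$, so by minimality the posterior risk $P_{\kappa,\epsilon}(L_\kappa(\phi_\kappa)\|x,s)\le 0$, and integrating against $p_{\kappa,\epsilon}(x,s)$ gives $P_{\kappa,\epsilon}(L_\kappa(\phi_\kappa))\le 0$. The lower bound $\ge -1$ is immediate because $L_\kappa(\phi_\kappa) = r_\kappa(cs/m|\lambda)\upsilon(\phi_\kappa(x,s,\cdot,\lambda)) - \phi_\kappa(x,s,\mu,\lambda)$ is the difference of a non-negative quantity and a quantity in $[0,1]$, hence $\ge -1$ almost surely.

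The case $\kappa=0$ requires only a bookkeeping adjustment: the conventions in the excerpt make $p_{0,\epsilon}(\lambda\|x,s)$, $r_0(\cdot|\lambda)$, $\mu_0=x$, and hence the full posterior $p_{0,\epsilon}(\mu,\lambda\|x,s)$ all well-defined, so the pointwise argument above goes through verbatim and identifies $\phi_0$ (the standard procedure) as the minimizer of $P_{0,\epsilon}(L_0(\phi)\|x,s)$. The improper-prior risk then satisfies $Q_{0,\epsilon}(L_0(\phi_0)) = \iint P_{0,\epsilon}(L_0(\phi_0)\|x,s)\,q_{0,\epsilon}(x,s)\,dx\,ds \le 0$ by integrating the pointwise inequality $P_{0,\epsilon}(L_0(\phi_0)\|x,s)\le P_{0,\epsilon}(L_0(0)\|x,s)=0$ against the non-negative measure $q_{0,\epsilon}(x,s)\,dx\,ds$. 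The main thing to be careful about is that all the Fubini swaps between the $\mu$-integral (defining $\upsilon$) and the $\lambda$- and $\mu$-integrals (defining the posterior mean) are legitimate; this is fine because every integrand in sight is non-negative for the first summand and bounded by $1$ times a probability density for the second, so Tonelli applies without fuss.
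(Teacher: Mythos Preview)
Your proof is correct and follows essentially the same approach as the paper: factorize the posterior, rewrite the posterior risk as $\int p_{\kappa,\epsilon}(\lambda\|x,s)\int\phi(x,s,\mu,\lambda)\bigl(r_\kappa(cs/m|\lambda)-r_\kappa(\|\mu-\mu_\kappa\|^2|\lambda)\bigr)\,d\mu\,d\lambda$, minimize pointwise using the strict monotonicity of $r_\kappa(\cdot|\lambda)$, and then read off the risk bounds from the nonpositivity of the minimized posterior risk and the trivial lower bound $L_\kappa(\phi)\ge -1$. Your treatment is slightly more explicit about the Tonelli justification and the comparison with $\phi\equiv 0$, but the argument is the same.
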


\begin{proof}
The posterior risk 
$P_{\kappa,\epsilon}(L_\kappa(\phi)\|x,s)$ can be written as
the integral over $\lambda \in (0,\infty)$ of the product of 
$p_{\kappa,\epsilon}(\lambda\|x,s)$ and
\begin{align*}
	&\int 
	\left( r_\kappa(c s/m|\lambda) 
		\upsilon(\phi(x,s,\cdot,\lambda))  
		- \phi(x,s,\mu,\lambda)\right) 
	\, p_{\kappa,\epsilon}(\mu\|x,s,\lambda) \; d \mu.
\end{align*}
Because $r_\kappa(c s/m|\lambda)$ does not depend
on $\mu$, the integral in the preceding display can also be written as
\begin{align*}
	& r_\kappa( c s/m|\lambda) 
		\int \phi(x,s,\mu,\lambda) \;d \mu - 
	\int \phi(x,s,\mu,\lambda) \, p_{\kappa,\epsilon}(\mu\|x,s,\lambda)
	\;d\mu
	\\
	&\quad=\quad
	\int \left(r_\kappa(c s/m|\lambda) 
		- p_{\kappa,\epsilon}(\mu\|x,s,\lambda)\right)
	\, \phi(x,s,\mu,\lambda) \; d\mu.
\end{align*}
In the preceding display,
the integral on the right-hand side of the equality is minimized
by taking $\phi(x,s,\mu,\lambda) = 1$ whenever 
$r_\kappa(c s/m|\lambda) < 
	p_{\kappa,\epsilon}(\mu\|x,s,\lambda)$ and zero otherwise.
Recalling that 
$p_{\kappa,\epsilon}(\mu\|x,s,\lambda) = 
	r_\kappa(\|\mu-\mu_\kappa\|^2| \lambda)$ 
is decreasing as $\|\mu-\mu_\kappa\|$ increases, we see that
a minimizing procedure is obtained by setting
$\phi(x,s,\mu,\lambda)=1$ whenever $\|\mu-\mu_\kappa\|^2 < c s/m$
and zero otherwise, i.e., by setting $\phi = \phi_\kappa$.
By construction, we have 
$P_{\kappa,\epsilon}( L_\kappa(\phi_\kappa) \|x,s) \leq 0$.

Finally, $Q_{\kappa,\epsilon}(L_\kappa(\phi_\kappa))$,
and also $P_{\kappa,\epsilon}(L_\kappa(\phi_\kappa))$ 
in case $\kappa>0$,
is bounded from above by zero, because the 
posterior risk
is non-positive by construction.
The lower bound on $P_{\kappa,\epsilon}(L_\kappa(\phi_\kappa))$ 
in case $\kappa>0$
has already been derived in the discussion leading up to 
Proposition~\ref{bayesprocedure}.
\end{proof}

We now compare the standard procedure $\phi_0$ and the Bayes procedure
$\phi_\kappa$ in terms of risk. [The proofs of the following two propositions
are more technical and therefore relegated to the Appendix.]

\begin{proposition} \label{PRD1}
Fix $p\geq 1$, $m\geq 1$, as well as $\kappa>0$ and $\epsilon>0$.
Then the risk difference satisfies
$$
P_{\kappa,\epsilon}( L_\kappa(\phi_0)) - 
	P_{\kappa,\epsilon}(L_\kappa(\phi_\kappa)) 
	\quad=\quad  
	F_{p,m}\bigg(\frac{c}{p}(1+\kappa)\bigg) 
	-
	F_{p,m}\bigg(\frac{c}{p}\bigg),
$$
where $F_{p,m}(\cdot)$ denotes the cumulative distribution function
of the $F$-distribution with $p$ and $m$ degrees of freedom.
\end{proposition}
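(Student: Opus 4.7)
The plan is to exploit the fact that $\phi_0$ and $\phi_\kappa$ are both indicators of Euclidean balls in $\mu$-space with the \emph{same} radius $(cs/m)^{1/2}$, differing only in their centers ($x$ versus $\mu_\kappa=x/(1+\kappa)$). Consequently $\upsilon(\phi_0(x,s,\cdot,\lambda)) = \upsilon(\phi_\kappa(x,s,\cdot,\lambda))$, so the volume-weighted terms in $L_\kappa$ cancel and
$$
L_\kappa(\phi_0) - L_\kappa(\phi_\kappa) \;=\; \phi_\kappa(x,s,\mu,\lambda)-\phi_0(x,s,\mu,\lambda).
$$
Taking $P_{\kappa,\epsilon}$-expectations reduces the problem to computing two coverage probabilities under the joint prior-data distribution, namely $P_{\kappa,\epsilon}(\|\mu-x\|^2<cs/m)$ and $P_{\kappa,\epsilon}(\|\mu-\mu_\kappa\|^2<cs/m)$.

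For $\phi_0$, I would argue by conditioning on $(\mu,\lambda)$ first. Under the sampling model, $\lambda\|x-\mu\|^2\sim\chi^2_p$ is independent of $s\lambda\sim\chi^2_m$, so the event $\|\mu-x\|^2<cs/m$ is equivalent to $\{\chi^2_p/p\} / \{\chi^2_m/m\} < c/p$. This yields $P_{\kappa,\epsilon}(\phi_0) = F_{p,m}(c/p)$, irrespective of the prior. For $\phi_\kappa$, I would invoke the explicit posterior from Section~\ref{S4.2}: conditional on $(x,s,\lambda)$, $\mu\sim N(\mu_\kappa,(\lambda(1+\kappa))^{-1}I_p)$, so that $\lambda(1+\kappa)\|\mu-\mu_\kappa\|^2\mid(x,s,\lambda)\sim\chi^2_p$; crucially, this conditional distribution does not depend on $(x,s)$. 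Combined with the fact that, under the joint distribution, $s\lambda\sim\chi^2_m$ given $\lambda$ independently of $(\mu,x)$ (because the data model makes $s$ independent of $\mu$ given $\lambda$, and $\mu_\kappa$ is a function of $x$), one obtains that $\lambda(1+\kappa)\|\mu-\mu_\kappa\|^2$ and $s\lambda$ are independent given $\lambda$, with $\chi^2_p$ and $\chi^2_m$ marginals. Rewriting $\|\mu-\mu_\kappa\|^2<cs/m$ as
$$
\frac{\lambda(1+\kappa)\|\mu-\mu_\kappa\|^2/p}{s\lambda/m} \;<\; \frac{c}{p}(1+\kappa)
$$
then gives $P_{\kappa,\epsilon}(\phi_\kappa) = F_{p,m}(c(1+\kappa)/p)$, and subtraction delivers the claim.

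The main obstacle is justifying the conditional independence/distribution assertion in the Bayesian setting: that under the proper $\NtG(p,0,\kappa,-p/2,0,\epsilon)$ prior, $\lambda(1+\kappa)\|\mu-\mu_\kappa\|^2$ and $s\lambda$ are, conditionally on $\lambda$, independent with the stated chi-squared marginals. This is really a statement about the joint density $p_{\kappa,\epsilon}(x,s,\mu,\lambda)$, and will presumably be handled by plugging in the explicit formulas from Section~\ref{S4.2} for $p_{\kappa,\epsilon}(\mu\|x,s,\lambda)$ and the relevant marginals, then performing the change of variables $\mu\mapsto\lambda^{1/2}(1+\kappa)^{1/2}(\mu-\mu_\kappa)$ and verifying that the Jacobian and remaining factors factor as the product of a standard normal density in this new variable and a $\chi^2_m$-density in $s\lambda$, uniformly in $\lambda$. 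Once this factorization is in hand, Fubini reduces each probability to an integral against the product of $\chi^2_p$ and $\chi^2_m$ densities, yielding the $F$-distribution values above and hence the stated risk difference.
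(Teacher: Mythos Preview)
Your approach is correct and, for the computation of $P_{\kappa,\epsilon}(\phi_\kappa)$, genuinely different from the paper's. The first two steps---cancelling the volume terms because $\phi_0$ and $\phi_\kappa$ are balls of equal radius, and evaluating $P_{\kappa,\epsilon}(\phi_0)=F_{p,m}(c/p)$ by conditioning on $(\mu,\lambda)$---are exactly what the paper does. For $P_{\kappa,\epsilon}(\phi_\kappa)$, however, the paper proceeds analytically: it writes the triple integral $\iint\int_{\|\mu-\mu_\kappa\|^2<cs/m}p_{\kappa,\epsilon}(\mu\|x,s)\,d\mu\,p_{\kappa,\epsilon}(x,s)\,dx\,ds$ explicitly, performs three substitutions to remove the dependence on $\epsilon$, calls the result $I(\tau)$ with $\tau=c$, and then shows $\partial I(\tau)/\partial\tau$ equals the $F_{p,m}$-density at $(1+\kappa)\tau/p$ by sandwiching the difference quotient between explicit upper and lower bounds.

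Your probabilistic route is shorter and more transparent. The conditional-independence step you flag as the ``main obstacle'' is in fact immediate from the hierarchical structure and does not require unpacking the density formulas: since $s\mid(\mu,\lambda)$ depends only on $\lambda$, one has $s\perp(x,\mu)\mid\lambda$ directly from the factorization $p_{\kappa,\epsilon}(x,s,\mu,\lambda)=p_{\kappa,\epsilon}(\lambda)\,p_{\kappa,\epsilon}(\mu\mid\lambda)\,p(x\mid\mu,\lambda)\,p(s\mid\lambda)$. Hence $V:=s\lambda\sim\chi^2_m$ is independent of $(x,\mu)$ given $\lambda$, while $W:=\lambda(1+\kappa)\|\mu-\mu_\kappa\|^2\sim\chi^2_p$ given $(x,\lambda)$ by the stated posterior, and $W$ is a function of $(x,\mu,\lambda)$ only. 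This already gives $W\perp V\mid\lambda$ with the required marginals, and integrating out $\lambda$ yields $F_{p,m}(c(1+\kappa)/p)$. What your argument buys is a one-line identification of the $F$-ratio; what the paper's explicit computation buys is a self-contained verification that does not appeal to posterior conditional-independence reasoning, at the cost of a longer calculus argument.
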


Note that the risk difference does not depend on $\epsilon$.
For $\kappa\to 0$,
we see that the risk difference between the standard procedure $\phi_0$
and the Bayes procedure $\phi_\kappa$ under $P_{\kappa,\epsilon}$ 
is $O(\kappa)$.
Compared to the scaling constant of $p_{\kappa,\epsilon}(x,s) = 
K_{\kappa,\epsilon}^{-1} q_{\kappa,\epsilon}(x,s)$, which is
of the order $O(\kappa^{p/2})$, the risk difference converges to zero
at a faster rate if $p=1$, and the same rate if $p=2$, and at a slower
rate if $p>2$.
In other words, for fixed $\epsilon>0$ and as $\kappa\to 0$,
the risk difference of
$\phi_0$ and $\phi_\kappa$ under $Q_{\kappa,\epsilon}$ 
converges to zero in case $p=1$,
to a constant in case $p=2$, and to infinity in case $p\geq 3$.
This will allow us to derive the conclusion of Proposition~\ref{t2}.

Now assume that the assumptions of Proposition~\ref{t2} are satisfied.
In particular, $p$ equals $1$ or $2$, $m$ is a fixed integer,
and the confidence procedure $\phi_1$ performs at least as well as the 
standard procedure $\phi_0$ in terms of measure (almost surely) and 
coverage probability (everywhere in parameter space).
Fix $\kappa>0$ and $\epsilon>0$ for the moment.
Because 
$$
P_{\kappa,\epsilon}( \phi_1(x,s,\mu,\lambda)\|\mu,\lambda)
\quad=\quad
E_{\mu,\lambda} [\phi_1(x,s,\mu,\lambda)]
$$
and 
$$
P_{\kappa,\epsilon}( \upsilon( \phi_1(x,s,\cdot,\lambda)) \|s, \mu,\lambda)
\quad=\quad
E_{\mu,\lambda} [\upsilon(\phi_1(x,s,\cdot,\lambda)) \|s],
$$
and because the relations in the two preceding displays also hold
with $\phi_0$ replacing $\phi_1$, it follows from
\eqref{t2.1} and \eqref{t2.2} that
\begin{equation}
\label{key}
Q_{\kappa,\epsilon}( 
	L_\kappa( \phi_1)) - 
Q_{\kappa,\epsilon}( L_\kappa( \phi_0)) \quad \leq \quad 0,
\end{equation}
in view of the definition of the loss
$L_\kappa$ (note that $r_\kappa(c s/m|\lambda)$ is a function of $s$.
If $\phi_1(x,s,\mu,\lambda) = 
\phi_0(x,s,\mu,\lambda)$ holds Lebesgue almost-everywhere,
then the conclusion of Proposition~\ref{t2} holds.
If $\phi_1(x,s,\mu,\lambda) \neq \phi_0(x,s,\mu,\lambda)$ 
on a set of positive
Lebesgue measure, then the next result entails that the
left-hand side of the preceding display must be positive for
certain $\kappa>0$ and $\epsilon>0$.
This leads to a contradiction and completes the proof of Proposition~\ref{t2}.

\begin{proposition} \label{PRD2}
Assume that Proposition~\ref{t2} applies.
If the confidence procedure $\phi_1$ 
is such that $\phi_1(x,s,\mu,\lambda) \neq \phi_0(x,s,\mu,\lambda)$ 
on a set 
of positive Lebesgue measure,
then
$$
Q_{\kappa,\epsilon}( L_\kappa(\phi_1)) -
Q_{\kappa,\epsilon}( L_\kappa(\phi_0))
\quad > \quad 0
$$
whenever $\kappa>0$ and $\epsilon>0$ 
are sufficiently close to zero.
\end{proposition}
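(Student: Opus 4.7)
\emph{Proof plan.}
The plan is a Blyth--Stein style admissibility argument, using the proper Bayes procedures $\phi_\kappa$ associated to the priors $p_{\kappa,\epsilon}$ as an approximation to the generalized Bayes procedure $\phi_0$ under the limiting improper prior $q_{0,\epsilon}$. First I split
\[
Q_{\kappa,\epsilon}(L_\kappa(\phi_1)) - Q_{\kappa,\epsilon}(L_\kappa(\phi_0)) \;=\; \Delta_1(\kappa,\epsilon) - \Delta_0(\kappa,\epsilon),
\]
where $\Delta_i(\kappa,\epsilon) := Q_{\kappa,\epsilon}(L_\kappa(\phi_i)) - Q_{\kappa,\epsilon}(L_\kappa(\phi_\kappa))$ for $i \in \{0,1\}$. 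Both quantities are non-negative because Proposition~\ref{bayesprocedure} identifies $\phi_\kappa$ as the pointwise minimizer of the posterior risk $P_{\kappa,\epsilon}(L_\kappa(\cdot)\|x,s)$. The goal is to show $\Delta_1 > \Delta_0$ when $\kappa$ and $\epsilon$ are sufficiently close to zero.

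The upper bound on $\Delta_0$ is immediate from Proposition~\ref{PRD1}:
\[
\Delta_0(\kappa,\epsilon) \;=\; K_{\kappa,\epsilon}\bigl[F_{p,m}(c(1+\kappa)/p) - F_{p,m}(c/p)\bigr].
\]
A first-order Taylor expansion gives $F_{p,m}(c(1+\kappa)/p) - F_{p,m}(c/p) = O(\kappa)$, while the explicit formula for $K_{\kappa,\epsilon}$ yields $K_{\kappa,\epsilon} = O(\kappa^{-p/2}\epsilon^{-p/2})$ as $\kappa \to 0$. Hence $\Delta_0(\kappa,\epsilon)$ vanishes for $p=1$ and converges to a quantity of order $1/\epsilon$ for $p=2$, as already highlighted in the discussion preceding the statement.

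For the lower bound on $\Delta_1$, I rewrite the conditional version using the identity derived in the proof of Proposition~\ref{bayesprocedure}:
\[
P_{\kappa,\epsilon}\bigl(L_\kappa(\phi_1) - L_\kappa(\phi_\kappa) \,\big\|\, x,s\bigr)
\;=\; \int p_{\kappa,\epsilon}(\lambda\|x,s) \int \bigl[r_\kappa(cs/m|\lambda) - p_{\kappa,\epsilon}(\mu\|x,s,\lambda)\bigr](\phi_1 - \phi_\kappa)\, d\mu\, d\lambda,
\]
which is non-negative pointwise by the Bayes property. As $\kappa \to 0$, the center $\mu_\kappa = x/(1+\kappa) \to x$, so $\phi_\kappa \to \phi_0$ Lebesgue-almost everywhere, and the kernels $r_\kappa$, $p_{\kappa,\epsilon}(\lambda\|x,s)$ together with the weight $q_{\kappa,\epsilon}(x,s) = K_{\kappa,\epsilon} p_{\kappa,\epsilon}(x,s)$ converge to their $\kappa=0$ counterparts. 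A Fatou argument, combined with the hypothesis that $\phi_1 \neq \phi_0$ on a set of positive Lebesgue measure and with the fact that the boundary $\{\|\mu - x\|^2 = cs/m\}$ is Lebesgue-null while $q_{0,\epsilon}$ has positive density elsewhere, yields
\[
\liminf_{\kappa \to 0} \Delta_1(\kappa,\epsilon) \;\geq\; Q_{0,\epsilon}\bigl(L_0(\phi_1) - L_0(\phi_0)\bigr) \;>\; 0
\]
for every fixed $\epsilon > 0$.

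For $p=1$ the argument then closes at once: $\Delta_0 \to 0$ while $\liminf \Delta_1 > 0$, so $\Delta_1 > \Delta_0$ for any fixed $\epsilon > 0$ and all $\kappa$ sufficiently small. The main obstacle is the case $p=2$, where $\Delta_0$ converges to a strictly positive constant (of order $1/\epsilon$) that does not vanish as $\kappa \to 0$. Here one must also send $\epsilon \to 0$ and verify that the lower bound on $\Delta_1$ grows in $\epsilon^{-1}$ at least as fast as the upper bound on $\Delta_0$. This requires a quantitative asymptotic analysis of $Q_{0,\epsilon}(L_0(\phi_1) - L_0(\phi_0))$ as $\epsilon \to 0$, based on the explicit formulas for $q_{0,\epsilon}$ in Section~\ref{S4.2}; this rate comparison is the technical crux of the proof and the reason the statement requires \emph{both} $\kappa$ and $\epsilon$ to be small.
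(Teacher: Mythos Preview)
Your plan is essentially the paper's for $p=1$, but for $p=2$ it has a genuine gap, and the paper takes a substantially different route.

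\textbf{The gap for $p=2$.} Your strategy is to compare the Fatou lower bound
$\liminf_{\kappa\to 0}\Delta_1(\kappa,\epsilon)\ge Q_{0,\epsilon}\bigl(L_0(\phi_1)-L_0(\phi_0)\bigr)=:\Delta(\epsilon)$
with $\lim_{\kappa\to 0}\Delta_0(\kappa,\epsilon)=\rho(\epsilon)\asymp 1/\epsilon$, and then send $\epsilon\to 0$ hoping $\Delta(\epsilon)$ also grows like $1/\epsilon$. It does not, in general. From Section~\ref{S4.2} one checks that $q_{0,\epsilon}(x,s)\,p_{0,\epsilon}(\lambda\|x,s)$ depends on $\epsilon$ only through the indicator $\{\lambda>\epsilon\}$; since the integrand defining $\Delta(\epsilon)$ is nonnegative, $\Delta(\epsilon)$ is monotone in $\epsilon$ and, if the set $\{\phi_1\neq\phi_0\}$ happens to be bounded away from $\lambda=0$, it is eventually constant in $\epsilon$. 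So the rate comparison you call ``the technical crux'' is simply false in general, and the decomposition $\Delta_1-\Delta_0$ cannot close the argument. (In fact the paper shows, in Proposition~\ref{PRDa2}, only that $\Delta(\epsilon)\le 2\rho(\epsilon)$, which gives no useful lower bound for your purpose.)

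\textbf{What the paper does instead.} For $p=2$ the paper \emph{fixes} $\epsilon$ once and for all (small enough that $\phi_1\neq\phi_0$ on positive measure in $\{\lambda>\epsilon\}$) and never sends it to zero. It decomposes $Q_{\kappa,\epsilon}(L_\kappa(\phi_1)-L_\kappa(\phi_0))$ \emph{directly}, without routing through $\phi_\kappa$, as $M_{\kappa,\epsilon}-N^{(1)}_{\kappa,\epsilon}-N^{(2)}_{\kappa,\epsilon}-N^{(3)}_{\kappa,\epsilon}$: the term $M$ is the contribution over a large compact set $C$ in $(x,s,\lambda)$, and the $N^{(i)}$ correspond to three annuli in $\mu$ (inside, near, and far from the standard ball) on $C^c$. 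Dominated convergence on compacta gives $M_{\kappa,\epsilon}>\Delta/2$; the real work is to show each $N^{(i)}_{\kappa,\epsilon}<\Delta/6$ for $\kappa$ small and $C$ large. This requires, for each annulus, pointwise comparisons between $p_{\kappa,\epsilon}(\mu\|x,s,\lambda)$ and $r_\kappa(cs/m|\lambda)$, convexity of the exponential, and H\"older inequalities that tie $N^{(i)}_{\kappa,\epsilon}$ back to the smallness of $N^{(i)}_{0,\epsilon}$ on $C^c$ (Lemmata~\ref{N1}--\ref{N3}). None of this structure is visible from the split $\Delta_1-\Delta_0$.
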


\begin{remark}\normalfont
\label{Rproof}
\begin{list}{\thercnt}{
        \usecounter{rcnt}
        \setlength\itemindent{5pt}
        \setlength\leftmargin{0pt}
        \setlength\partopsep{0pt}
        }
\item\label{Rproof.1}
Proposition~\ref{t2} can not be derived using the result that
is often called Blyth's method in the literature; cf., for example,
Theorem 7.13 in Chapter 5 of \cite{Leh98a}.
This is because the loss function $L_\kappa$ changes with the
prior $P_{\kappa,\epsilon}$ or the un-normed prior $Q_{\kappa,\epsilon}$.
But in the case where $p=1$, 
that result and our Proposition~\ref{PRD2} are derived
from essentially the same arguments.
These arguments rely on the property that 
$\phi_0$ is sufficiently close to the Bayes
procedure $\phi_\kappa$ here, in the sense that
$Q_{\kappa,\epsilon}(L_\kappa(\phi_0)) - 
	Q_{\kappa,\epsilon}(L_\kappa(\phi_\kappa))$
converges to zero as $\kappa\to 0$.
In the case where $p=2$, however, these arguments break down because
$Q_{\kappa,\epsilon}(L_\kappa(\phi_0)) - 
	Q_{\kappa,\epsilon}(L_\kappa(\phi_\kappa))$
converges to a constant. 
Nevertheless, the ideas of \cite{Bly51a}
can be adapted to also deal with this case.
This adaptation, i.e., the derivation of 
Proposition~\ref{PRD2} in the case where $p=2$,
is more intricate and comprises the bulk of the Appendix.
\item\label{Rproof.2}
Both the results of \cite{Jos69a} and our Proposition~\ref{t2} are 
derived by using ideas of \cite{Bly51a}.
But the arguments by which we derive Proposition~\ref{t2} differ from
those used by \cite{Jos69a} in two respects:
First, the weight $r_\kappa(c s/m|\lambda)$ in the
loss function in our setting depends on $s$, $\lambda$ and $\kappa$,
while this is not the case in the setting considered
by \cite{Jos69a}.
Its dependence on $s$
is also the reason why our proof establishes, for the standard procedure,
a stronger version of 
admissibility than that considered by \cite{Jos69a},
but it does not establish what is called strong admissibility by that author.
Second, at the technical core of the proof, it appears that the arguments used
by \cite{Jos69a} can not be adapted to our setting and 
that different tactics are required.  Compare  the proofs of
Lemma 6.1 and, in particular,  Lemma 6.2 in \cite{Jos69a} with 
those of lemmata~\ref{N1}--\ref{N3} in our paper.
\item \label{Rproof.3}
As a referee points out, the methods of proof used here
and by \cite{Jos69a} also rely on non-Bayesian techniques 
for proving admissibility that can be traced back to 
\cite{Bla51a} 
and that
were further developed by \cite{Bro66a} as well as \cite{Bro74a,Bro74b}.
\item\label{Rproof.4}
The function $L_\kappa(\phi)$ is not a loss function in the sense
of \cite{Leh98a}, because it depends on $r_\kappa(c s /m|\lambda)$ and hence
is not a function of the decision (e.g., the chosen confidence set)
and the parameter $\mu$ only.
Loss functions similar to $L_\kappa(\phi)$,
that depend on the decision, the true parameter and on the data,
are considered, for example, by \cite{Bro66a}, \cite{Bro74a, Bro74b} or by
\cite{Ste08a}.
\end{list}
\end{remark}

\begin{remark}\normalfont
\label{Rstrong}
It would be most interesting to know whether the standard procedure
is also strongly admissible in the sense of \cite{Jos69a}, i.e.,
whether Theorem~\ref{t1} and Theorem~\ref{t3} continue to hold
if the conditional expectations in \eqref{t1.1} and \eqref{t3.1}
are replaced by unconditional expectations. The results in this
paper do not answer this question. Indeed, Proposition~\ref{t2},
our main technical result, does not hold if the conditional expectation in
\eqref{t2.1} is replaced by an unconditional one. [Assuming otherwise,
take $\phi_1$ to be the standard confidence set in the known-variance
case to obtain a contradiction.]
It is not clear whether
our methods can be extended or adapted to also cover strong admissibility.
In the literature, there are several examples demonstrating that the
conditional behavior of tests or confidence sets can differ substantially
from their unconditional behavior; see, for example,
\cite{Bro67a,Ols73a,Rob75a,Rob79a,Bro84a};
and the references given therein.

\end{remark}

\section{On the normal-truncated-gamma prior}
\label{S4}

\subsection{General formulae}
\label{S4.1}

Throughout this section, fix $p\geq 1$.
The normal-truncated-gamma prior is a distribution on 
the parameter space $\R^p \times (0,\infty)$  that depends
on the hyper-parameters 
$\mu_\circ \in \R^p$, 
$\kappa_\circ> 0$, 
$\alpha_\circ \in \R$, 
$\beta_\circ\geq 0$, and 
$\epsilon_\circ\geq 0$.
For the density of this prior, which is defined in the following,
to be proper, we also assume that either
$\epsilon_\circ = 0$, $\alpha_\circ>0$ and $\beta_\circ>0$;
or $\epsilon_\circ>0$ and $\beta_\circ>0$; 
or $\epsilon_\circ>0$, $\alpha_\circ<0$ and $\beta_\circ=0$.
The density of this prior is given, for $\mu\in \R^p$ and $\lambda>0$, by
$$
p(\mu,\lambda) \quad=\quad
	C_{p, \alpha_\circ,\beta_\circ,\epsilon_\circ}
	\kappa_\circ^{p/2}
	\lambda^{\alpha_\circ+p/2-1} 
	e^{-\lambda (\beta_\circ + \kappa_\circ\| \mu-\mu_\circ\|^2/2)}
	\{ \lambda > \epsilon_\circ\},
$$
where the scaling constant 
$C_{p, \alpha_\circ,\beta_\circ,\epsilon_\circ}$
equals 
$(2 \pi)^{-p/2} \beta_\circ^{\alpha_\circ}/\Gamma(\alpha_\circ,
\beta_\circ \epsilon_\circ)$ in case $\beta_\circ >0$ and
$(2 \pi)^{-p/2} (-\alpha_\circ)\epsilon_\circ^{-\alpha_\circ}$ in
case $\beta_\circ=0$.
[Here $\Gamma(\alpha_\circ, \epsilon_\circ \beta_\circ)$
denotes the incomplete Gamma function $\int_{\epsilon_\circ\beta_\circ}^\infty
t^{\alpha_\circ-1} e^{-t} d t$.]
It is elementary to verify that the density $p(\cdot,\cdot)$ is proper for the
hyper-parameters as chosen here.
In the following, we use the symbol 
$\NtG(p,\mu_\circ, \kappa_\circ, \alpha_\circ,\beta_\circ, \epsilon_\circ)$
to denote the normal-truncated-gamma prior with the indicated hyper-parameters,
always assuming that these are such that the prior is proper.

\begin{remark} \normalfont
\label{rNtG}
\begin{list}{\thercnt}{
        \usecounter{rcnt}
        \setlength\itemindent{5pt}
        \setlength\leftmargin{0pt}
        \setlength\partopsep{0pt}
        }
\item
\label{rNtG.i}
In the case where $\epsilon_\circ=0$ (and hence $\alpha_0>0$ and 
$\beta_0>0$),
this prior reduces to the well-known normal-gamma prior, which is obtained
by taking $\lambda\sim \Gamma(\alpha_\circ,\beta_\circ)$ and
$\mu \|\lambda \sim N(\mu_\circ, (\kappa_\circ \lambda)^{-1} I_p)$;
cf., say, \cite{Rai61a}.
Also, in the case where case $\epsilon_\circ>0$, $\alpha_\circ>0$ and
$\beta_\circ>0$, this prior corresponds to taking 
$\lambda$ as $\Gamma(\alpha_0,\beta_0)$-distributed conditional on
the event that $\lambda > \epsilon_\circ$, and to then taking
$\mu\|\lambda \sim N(\mu_\circ, (\kappa_\circ \lambda)^{-1} I_p)$.
In general, taking a conjugate family of priors and then conditioning
on some region in parameter space, one again obtains a conjugate family.
\item
\label{rNtG.ii}
The normal-truncated-gamma prior 
$\NtG(p,0, \kappa_\circ, -p/2, 0, \epsilon_\circ)$ approximates
the non-informative prior with density $1/\lambda$
(a reference prior; cf. \citealp{Ber92a}),
as $\kappa_\circ\to 0$ and $\epsilon_\circ\to 0$,
in the sense that, for the former prior, 
the re-scaled density 
$p(\mu,\lambda)/(C_{p,-p/2,0,\epsilon_\circ}\kappa_\circ^{p/2})$
converges to $1/\lambda$ as $\kappa_\circ\to 0$ and $\epsilon_\circ\to 0$.
It is not possible to approximate this non-informative prior by
the normal-gamma priors mentioned earlier
or by those proposed by \cite{Ath86a} and \cite{Dic71a}. 
\end{list}
\end{remark}

In the next two results,
we collect some basic properties of the normal-truncated-gamma prior, that
are elementary to verify, and that are used heavily throughout
the proof of Proposition~\ref{t2}.

\begin{proposition}
\label{propNtG2}
Consider a hierarchical model with a $\NtG(p, \mu_\circ, \kappa_\circ,
\alpha_\circ, \beta_\circ, \epsilon_\circ)$ prior on the
parameters $\mu \in \R^p$ and $\lambda >0$, and 
with observations $x\in \R^p$ and $s>0$ 
so that $x$ and $s$ are independent conditional on $(\mu,\lambda)$,
and so that
$$
x\|(\mu,\lambda) \quad \sim \quad N(\mu, \lambda^{-1} I_p)
\qquad\text{and}\qquad
s \|(\mu,\lambda) \quad\sim\quad \lambda^{-1} \chi^2_m
$$
for some $m\geq 1$.
Then the posterior density of $\mu$ and $\lambda$ given $x$ and $s$ is
the density of the $\NtG(p, \mu_1,\kappa_1, \alpha_1, \beta_1,\epsilon_\circ)$
prior with
\begin{align*}
\mu_1 &\quad=\quad \frac{x + \kappa_\circ \mu_\circ}{1+\kappa_\circ}, 
\quad\quad\quad 
&\kappa_1 &\quad=\quad 1+\kappa_\circ,
\\
\alpha_1 &\quad=\quad \alpha_\circ+\frac{p+m}{2}, 
\quad\quad\quad 
&\beta_1 &\quad=\quad \beta_\circ+\frac{s}{2} + 
\frac{\kappa_\circ}{1+\kappa_\circ} \frac{\|x-\mu_\circ\|^2}{2}.
\end{align*}
\end{proposition}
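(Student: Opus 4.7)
The plan is to verify this by a direct Bayes-rule calculation: I will form the joint density of $(x,s,\mu,\lambda)$ as the product of the prior and the likelihood, regroup terms so that the $\mu$-dependence appears as a single quadratic, and then read off the parameters of the posterior. Everything reduces to a standard completing-the-square manipulation, combined with careful bookkeeping of the powers of $\lambda$ and the truncation indicator $\{\lambda>\epsilon_\circ\}$.

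Concretely, the likelihood is
\begin{align*}
f(x,s\|\mu,\lambda)\;\propto\;\lambda^{p/2}e^{-\lambda\|x-\mu\|^2/2}\cdot \lambda^{m/2}e^{-\lambda s/2},
\end{align*}
where the omitted factor depends on $s$ and $m$ only. Multiplying by the prior density and collecting powers of $\lambda$ yields
\begin{align*}
p(\mu,\lambda\|x,s)\;\propto\;\lambda^{\alpha_\circ+(p+m)/2+p/2-1}\,e^{-\lambda\bigl(\beta_\circ+s/2+\tfrac{1}{2}[\kappa_\circ\|\mu-\mu_\circ\|^2+\|\mu-x\|^2]\bigr)}\{\lambda>\epsilon_\circ\},
\end{align*}
so the exponent on $\lambda$ in the posterior is already $\alpha_\circ+(p+m)/2+p/2-1=\alpha_1+p/2-1$, which gives $\alpha_1$. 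The truncation $\{\lambda>\epsilon_\circ\}$ is preserved automatically since neither the prior support nor the likelihood depends on any cut-off.

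The only nontrivial step is identifying $\mu_1$, $\kappa_1$ and $\beta_1$ by completing the square in $\mu$. Expanding
\begin{align*}
\kappa_\circ\|\mu-\mu_\circ\|^2+\|\mu-x\|^2\;=\;(1+\kappa_\circ)\|\mu\|^2-2\mu'(x+\kappa_\circ\mu_\circ)+\kappa_\circ\|\mu_\circ\|^2+\|x\|^2,
\end{align*}
and then factoring out $\kappa_1=1+\kappa_\circ$ and adding/subtracting $\kappa_1\|\mu_1\|^2$ with $\mu_1=(x+\kappa_\circ\mu_\circ)/(1+\kappa_\circ)$, one finds
\begin{align*}
\kappa_\circ\|\mu-\mu_\circ\|^2+\|\mu-x\|^2\;=\;\kappa_1\|\mu-\mu_1\|^2+\frac{\kappa_\circ}{1+\kappa_\circ}\|x-\mu_\circ\|^2.
\end{align*}
The residual term is $\mu$-free, so it gets absorbed into $\beta_1$, yielding exactly the formula given in the statement. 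Substituting back shows that $p(\mu,\lambda\|x,s)$ has the $\NtG(p,\mu_1,\kappa_1,\alpha_1,\beta_1,\epsilon_\circ)$ form, up to a normalising constant depending on $(x,s)$ only.

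Finally, to be fully rigorous one should check that the proposed parameters $(\alpha_1,\beta_1,\kappa_1,\epsilon_\circ)$ still fall within one of the three admissible regimes listed before the definition of the prior, so that the posterior is itself proper; but since $\kappa_1>0$, $\alpha_1>\alpha_\circ$ and $\beta_1\geq\beta_\circ$, and $\epsilon_\circ$ is unchanged, each of the three cases is clearly preserved. The main (and only) obstacle is the algebraic completing-the-square identity above, which is entirely routine.
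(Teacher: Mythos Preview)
Your proof is correct and is exactly the elementary verification the paper alludes to (the paper does not spell out a proof, stating only that the result is ``elementary to verify''). One minor wording issue: in case~3 of the prior hyper-parameter regimes ($\epsilon_\circ>0$, $\alpha_\circ<0$, $\beta_\circ=0$) the posterior actually moves to case~2 since $\beta_1\geq s/2>0$, so the cases are not literally ``preserved'' one-to-one, but your conclusion that the posterior is proper is unaffected.
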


\begin{proposition}
\label{propNtG1}
Under the 
$\NtG(p,\mu_\circ, \kappa_\circ,\alpha_\circ,\beta_\circ,\epsilon_\circ)$-prior,
the marginal density of $\mu$ is given by
\begin{align*}
p(\mu)\quad=\quad  
	C_{p, \alpha_\circ,\beta_\circ, \epsilon_\circ}
	\kappa_\circ^{p/2}
	\frac{
		\Gamma(\alpha_\circ+p/2, 
			\epsilon_\circ(\beta_\circ+\kappa_\circ \|
				\mu-\mu_\circ\|^2/2))
	}{
	(\beta_\circ+\kappa_\circ \|\mu-\mu_\circ\|^2/2)^{\alpha_\circ+p/2}
	}
\end{align*}
and the marginal density of $\lambda$ is given by 
\begin{align*}
p(\lambda)\quad=\quad  
C_{p,\alpha_0,\beta_0,\epsilon_0} (2 \pi)^{p/2} \lambda^{\alpha_0-1}
	e^{-\lambda \beta_0} \{ \lambda > \epsilon_0\}.
\end{align*}
Under the hierarchical model from Proposition~\ref{propNtG2},
the marginal density of $x$ and $s$ is given by
\begin{align*}
&p(x,s) \quad=\quad 
	\frac{
		C_{p, \alpha_\circ, \beta_\circ, \epsilon_\circ}}{
	2^\frac{m}{2}
	\Gamma(m/2)}
	\,
	\left(
	\frac{\kappa_\circ 
	}{1+\kappa_\circ}
	\right)^{p/2}
	\,
	s^{\frac{m}{2}-1} \,
	\frac{
		\Gamma(\alpha_1, \epsilon_\circ \beta_1)
	}{
	\beta_1^{\alpha_1}
	} ,
\end{align*}
for $\alpha_1$ and $\beta_1$ as in Proposition~\ref{propNtG2} above.
\end{proposition}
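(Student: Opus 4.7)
The plan is to obtain each of the three marginals by direct integration of the joint density, using the explicit form of the $\NtG$-density and, for the last part, the quadratic identity that underlies Proposition~\ref{propNtG2}.

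For the marginal of $\mu$, I would integrate $p(\mu,\lambda)$ over $\lambda>\epsilon_\circ$. As a function of $\lambda$, the joint density is proportional to $\lambda^{\alpha_\circ+p/2-1}e^{-\lambda(\beta_\circ+\kappa_\circ\|\mu-\mu_\circ\|^2/2)}$, and the substitution $t=\lambda(\beta_\circ+\kappa_\circ\|\mu-\mu_\circ\|^2/2)$ turns the remaining integral into an upper incomplete gamma function divided by a suitable power of $\beta_\circ+\kappa_\circ\|\mu-\mu_\circ\|^2/2$; this is exactly the stated formula. For the marginal of $\lambda$, I would instead integrate $p(\mu,\lambda)$ over $\mu\in\R^p$ with $\lambda>\epsilon_\circ$ held fixed. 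The integrand is a spherical Gaussian in $\mu$ with precision $\lambda\kappa_\circ$, so the Gaussian integral contributes $(2\pi/(\lambda\kappa_\circ))^{p/2}$; after cancelling the $\kappa_\circ^{p/2}$ prefactor and collapsing the $\lambda$-powers to $\lambda^{\alpha_\circ-1}$, the claimed form follows.

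For $p(x,s)$, I would start from the factorization $p(x,s,\mu,\lambda)=p(x\|\mu,\lambda)\,p(s\|\mu,\lambda)\,p(\mu,\lambda)$ and apply the quadratic identity
$$
\|x-\mu\|^2+\kappa_\circ\|\mu-\mu_\circ\|^2 \;=\; \kappa_1\|\mu-\mu_1\|^2 + \frac{\kappa_\circ}{1+\kappa_\circ}\|x-\mu_\circ\|^2
$$
(with $\mu_1$ and $\kappa_1$ as in Proposition~\ref{propNtG2}) to complete the square in the exponent. The joint density then factors into a function of $(x,s)$ alone times an unnormalized $\NtG(p,\mu_1,\kappa_1,\alpha_1,\beta_1,\epsilon_\circ)$-density in $(\mu,\lambda)$; integrating out $(\mu,\lambda)$ produces a factor $(C_{p,\alpha_1,\beta_1,\epsilon_\circ}\kappa_1^{p/2})^{-1}$, and since $\beta_1>0$ (because $s>0$) we have $C_{p,\alpha_1,\beta_1,\epsilon_\circ}=(2\pi)^{-p/2}\beta_1^{\alpha_1}/\Gamma(\alpha_1,\beta_1\epsilon_\circ)$. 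Substituting this in and collecting the remaining constants yields the claimed formula.

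The computations are all elementary. The only risk is in bookkeeping: keeping the scaling constants $C_{p,\cdots}$, the powers of $\kappa_\circ$ and $\kappa_1$, the $(2\pi)$'s from the Gaussian integrals, and the incomplete gamma and Gamma factors all consistent. The conceptual step is completing the square in $\mu$ for the $(x,s)$ marginal, but that is precisely the algebraic identity that underlies the conjugacy statement in Proposition~\ref{propNtG2}; once it is invoked, the rest of the argument is bookkeeping, so no substantial obstacle is anticipated.
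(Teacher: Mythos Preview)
Your proposal is correct and matches the paper's approach: the paper does not give a detailed proof but simply states that these formulae ``are elementary to verify,'' and the direct integrations you outline (integrating out $\lambda$ via the incomplete gamma function, integrating out $\mu$ via the Gaussian integral, and completing the square for $p(x,s)$) are precisely the elementary verifications the authors have in mind.
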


\subsection{Formulae for the  specific priors used in Section~\ref{S3}}
\label{S4.2}

Because $p_{\kappa,\epsilon}(\mu,\lambda)$ denotes the
density of the $\NtG(p,0,\kappa, -p/2,0,\epsilon)$-prior,
the following are immediate consequences of the statements
in Section~\ref{S4.1}:
Recall that $\mu_\kappa = x/(1+\kappa)$, 
and set $\beta_\kappa = (s+\frac{\kappa}{1+\kappa}\|x\|^2)/2$.
For each $\kappa>0$ and $\epsilon>0$, 
the marginal density of the observables $(x,s)$ is given by
\begin{equation}\label{marginal}\nonumber
\begin{split}
	&p_{\kappa,\epsilon}(x,s) \quad=\quad
		\frac{ 1
			}{ 
			 \Gamma(\frac{m}{2})}
		\frac{p}{2} 
		\left(
		\frac{\epsilon}{2 \pi}\;
		\frac{\kappa}{1+\kappa}
		\right)^{\frac{p}{2}}
		s^{\frac{m}{2}-1}
		\frac{
			\Gamma\left( \frac{m}{2}, 
			\epsilon \beta_\kappa\right)
		}{
		\left( 2 \beta_\kappa \right)^{\frac{m}{2}}
		}
		;
\end{split}
\end{equation}
the posterior marginal of $\lambda$ given $(x,s)$ satisfies
$$
p_{\kappa,\epsilon}(\lambda\|x,s)\quad=\quad
	\frac{\beta_\kappa^{\frac{m}{2}}}{ 
		\Gamma\left( \frac{m}{2}, \epsilon \beta_\kappa\right)}
	\lambda^{\frac{m}{2}-1}
	e^{-\lambda \beta_\kappa}
	\{ \lambda > \epsilon\};
$$
the posterior marginal of $\mu$ given $(x,s)$ satisfies
\begin{align*}
&p_{\kappa,\epsilon}(\mu\|x,s)\quad=\quad
\\
& \qquad\left( \frac{1+\kappa}{2 \pi}
\right)^\frac{p}{2}
\frac{ 
	\beta_\kappa^\frac{m}{2}
}{
	\Gamma\left( \frac{m}{2}, \epsilon \beta_\kappa
	\right)
}
\frac{ 
	\Gamma\left( \frac{m+p}{2}, \epsilon( \beta_\kappa+
	(1+\kappa)\|\mu-\mu_\kappa\|^2/2)\right)
}{
	(\beta_\kappa+(1+\kappa)\|\mu-\mu_\kappa\|^2/2)^\frac{m+p}{2}
};
\end{align*}
and
the conditional density of $\mu$ given $(x,s,\lambda)$,
i.e., $p_{\kappa,\epsilon}(\mu\|x,s,\lambda)$,
is the density the $N(\mu_\kappa, I_p/((1+\kappa)\lambda))$-distribution.
The conditional density  of $\mu$ given $x$, $s$, and $\lambda$ 
is well-defined only on the event $\lambda>\epsilon$, because that
event has probability one under the prior.
For $0<\lambda\leq \epsilon$, we define $p_{\kappa,\epsilon}(\mu\|x,s,\lambda)$
also as the density the $N(\mu_\kappa, I_p/((1+\kappa)\lambda))$-distribution,
for convenience.

For $K_{\kappa,\epsilon}$ as in Section~\ref{S3}, 
it is also easy to see that
$q_{\kappa,\epsilon}(\mu,\lambda) 
= K_{\kappa,\epsilon} p_{\kappa,\epsilon}(\mu,\lambda)$ 
and
$q_{\kappa,\epsilon}(x,s) 
= K_{\kappa,\epsilon} p_{\kappa,\epsilon}(x,s)$ 
satisfy
\begin{align*}
	q_{\kappa,\epsilon}(\mu,\lambda) &\quad=\quad
		\Gamma\left(\frac{m}{2}\right) (1+\kappa)^\frac{p}{2}
		\lambda^{-1} e^{-\lambda \kappa \frac{\|\mu\|^2}{2}}
		\;\{\lambda > \epsilon\},
\\
	q_{\kappa,\epsilon}(x,s) &\quad=\quad
		s^{\frac{m}{2}-1}
		\left(
		2 \beta_\kappa
		\right)^{-\frac{m}{2}}
		\Gamma\left( \frac{m}{2}, 
		\epsilon \beta_\kappa\right).
\end{align*}

For the case where $\kappa=0$ and $\epsilon>0$, 
note first that $\mu_0=x$ and $\beta_0 = s/2$.
Moreover, the functions $q_{0,\epsilon}(\mu,\lambda)$,
$q_{0,\epsilon}(x,s)$, as well as the conditional densities
$p_{0,\epsilon}(\lambda\|x,s)$,
$p_{0,\epsilon}(\mu\|x,s,\lambda)$, and
$p_{0,\epsilon}(\mu\|x,s)$ are well-defined by the
formulae in the preceding paragraphs (because $\beta_0 > 0$).

\section*{Acknowledgments}
We thank Larry Brown, John Hartigan,
Benedikt P\"otscher, and 
David Preinerstorfer
for inspiring discussions and suggestions.
Also, helpful feedback from the Joint Editor and two
referees is greatly appreciated.

\begin{appendix}

\section{Technical remarks for Section~\ref{S2}}
\label{AA}

\begin{remark}\normalfont \label{TR1}
Write the conditional mean in \eqref{pt3.1}
as an integral with respect to the conditional density of
$\hat{\beta}_{(\neg p})$. This density is Gaussian and can hence
be approximated pointwise from below by simple functions
that are constant in $\hat{\beta}_{(p)}$ on rectangles.
The values of these
simple functions can be chosen to be continuous functions of 
$(\beta,\sigma^2,\hat{\beta}_{(p)})$.
And $\varphi_1 (\hat{\beta}, \hat{\sigma}^2, \beta_{(p)})$ can 
be approximated pointwise from below by simple functions
in $(\hat{\beta}, \hat{\sigma}^2, \beta_{(p)})$.
Using Tonelli's theorem and the monotone convergence theorem, 
we see that
the conditional mean in \eqref{pt3.1} is approximated
pointwise
from below by finite sums of functions, where each term in such sum
is the product of a function that is measurable in
$(\hat{\beta}_{(p)}, \hat{\sigma}^2,\beta_{(p)})$
and a function that is continuous in 
$(\beta,\sigma^2,\hat{\beta}_{(p)})$.
Measurability of $\varphi_{1|\beta_{(\neg p)}}$ follows, 
because the pointwise limit of measurable functions is measurable.
\end{remark}

\begin{remark}\normalfont\label{TR2}
Recalling that $\beta_{(\neg p)}$ is fixed,
we see
for each $(\mu,\sigma^2)$
or, equivalently,
for each $(\beta_{(p)}, \sigma^2)$
that
$E_{\mu,\sigma^2}[ \phi_1(x,s,\mu,\sigma^2)] =
\E_{\beta,\sigma^2}[ 
\varphi_1(\hat{\beta},\hat{\sigma}^2,\beta_{(p)})]$,
that
$\upsilon( \phi_1(x,s,\cdot,\sigma^2)) = 
\det S_{(p)}^{-1/2}
 \upsilon( 
	\varphi_1(\hat{\beta},\hat{\sigma}^2,\cdot))$,
and that these two equalities continue to hold with
$\phi_0$ and $\varphi_0$ replacing $\phi_1$ and $\varphi_{1}$,
respectively.
\end{remark}

\begin{remark}\normalfont \label{TR3}
For the moment,
fix $\hat{\beta}_{(p)}$, $\hat{\sigma}^2$, $\beta_{(p)}$,
and $\sigma^2$ so that
the relation in 
\eqref{pt3.2}
holds for almost all
$\beta_{(\neg p)}$. 
And recall that $\varphi_{1|\beta_{(\neg p)}}(\hat{\beta}_{(p)},
\hat{\sigma}^2, \beta_{(p)}, \hat{\sigma}^2)$ is the 
conditional expectation of a function of $\hat{\beta}_{(\neg p)}$
(and $\beta_{(p)}$)
given $\hat{\beta}_{(p)}$ and $\hat{\sigma}^2$.
Write 
$\mathcal L_{\beta_{(\neg p)}}(\hat{\beta}_{(\neg p)}|
\hat{\beta}_{(p)}, \hat{\sigma}^2, \beta_{(p)},\sigma^2)$
for the corresponding (Gaussian) conditional distribution.
Parameterized by $\beta_{(\neg p)}\in \R^{d-p}$,
these distributions form an exponential family.
Because the relation in \eqref{pt3.2} holds
for almost all $\beta_{(\neg p)}$, it 
holds, in fact, for all $\beta_{(\neg p)}$; cf. 
Theorem~5.8 in Chapter 1 of \cite{Leh98a}.
The relation in the \eqref{pt3.3} now follows
from \eqref{pt3.2}, because
$\hat{\beta}_{(\neg p)}$ is a complete statistic
for the full-rank exponential family 
$\{ \mathcal L_{\beta_{(\neg p)}}(\hat{\beta}_{(\neg p)}|
\hat{\beta}_{(p)}, \hat{\sigma}^2, \beta_{(p)},\sigma^2):\;
\beta_{(\neg p)} \in \R^{d-p}\}$;
cf. Theorem~6.22 in Chapter 1 of \cite{Leh98a}.
\end{remark}

\section{Proof of Proposition~\ref{PRD1}}
\label{AB}
\label{proofrd1}

\begin{proof}[Proof of Proposition~\ref{PRD1}]
Recall that both $\phi_0$ and $\phi_\kappa$ correspond to balls
of radius $\sqrt{c s/m}$ centered at $x$ and $\mu_\kappa$, respectively.
We 
therefore have 
$\upsilon(\phi_0(x,s,\cdot,\lambda)) = 
	\upsilon(\phi_\kappa(x,s,\cdot,\lambda))$,
and the risk difference reduces to
$$
P_{\kappa,\epsilon}( \phi_0(x,s,\mu,\lambda)) \;-\; 
	P_{\kappa,\epsilon}(\phi_\kappa(x,s,\mu,\lambda)).
$$
For the standard procedure, we note, for fixed $\lambda$ and $\mu$, that
$(\|x-\mu\|^2/p)/(s/m) \| (\lambda,\mu)$ is $F$-distributed with $p$ and $m$
degrees of freedom, so that 
$P_{\kappa,\epsilon}(\phi_0(x,s,\mu,\lambda)) = F_{p,m}(c/p)$.
It remains to compute $P_{\kappa,\epsilon}(\phi_\kappa(x,s,\mu,\lambda))$. 
Because $\phi_\kappa(x,s,\mu,\lambda)$ is constant in $\lambda$, 
we have
\begin{align*}
&P_{\kappa,\epsilon}(\phi_\kappa(x,s,\mu,\lambda)) \quad=\quad
\iint\int\limits_{\|\mu-\mu_\kappa\|^2 \,<\,\, c \frac{s}{m}}
p_{\kappa,\epsilon}(\mu\|x,s) \; d \mu \; 
p_{\kappa,\epsilon}(x,s) \; d x \; d s
\\
&=\quad
\frac{p/2}{\pi^p \Gamma(m/2)}
\iint t^{\frac{m}{2}-1} 
\int\limits_{\|u\|^2\,<\,\, c t \frac{1+\kappa}{m}}
\frac{
	\Gamma(\frac{p+m}{2}, t+\|z\|^2+ \|u\|^2)
	}{
	(t+\|z\|^2+ \|u\|^2)^{\frac{p+m}{2}}
}
\; d u \; d z \; d t,
\end{align*}
where the second equality is obtained by plugging-in the formulae
for $p_{\kappa,\epsilon}(\mu\|x,s)$ and 
$p_{\kappa,\epsilon}(x,s)$ given in Section~\ref{S4.2}, 
by substituting $u$ for $(\mu-\mu_\kappa)\sqrt{(1+\kappa)(\epsilon/2)}$ in
the innermost integral,
by then substituting $z$ for $x \sqrt{(\kappa/(1+\kappa))(\epsilon/2)}$
in the middle integral, and
lastly substituting $t$ for $s (\epsilon/2)$ in the outermost integral.
Note that the resulting integral, and hence 
$P_{\kappa,\epsilon}(\phi_\kappa(x,s,\mu,\lambda))$, does not
depend on $\epsilon$.

Write $I(\tau)$ for the expression on the far right-hand side of the
preceding display with $\tau$ replacing $c$. 
Clearly $I(\tau)$ is well-defined
for each $\tau>0$.
We need to show that $I(c) = F_{p,m}((1+\kappa)c/p)$.
This will follow if we show that $I(\tau)$ is differentiable in $\tau>0$,
and that the derivatives of $I(\tau)$ and $F_{p,m}((1+\kappa)\tau/p)$ 
with respect to $\tau$ agree, i.e.,
that
\begin{equation}\label{tmp0}
\frac{\partial I(\tau)}{\partial \tau} \quad=\quad
\frac{1}{
B\left(\frac{p}{2}, \frac{m}{2}\right)}
\left(\frac{1+\kappa}{m}\right)^\frac{p}{2}
\tau^{\frac{p}{2}-1}
\left(1+\tau \frac{1+\kappa}{m}\right)^{-\frac{p+m}{2}}
\end{equation}
holds for each $\tau>0$. 

For fixed $\tau>0$ and for each $\delta>0$, the difference quotient
$(I(\tau+\delta) - I(\tau))/\delta$ can be written as
$p / (2 \pi^p \Gamma(m/2))$ multiplied by
$$
\iint t^{\frac{m}{2}-1} 
\int\limits_{  \tau t\frac{1+\kappa}{m} \,\leq\, \|u\|^2\,<\,\, 
(\tau+\delta)t\frac{1+\kappa}{m}
}
\frac{1}{\delta}
\frac{
	\Gamma(\frac{p+m}{2}, t+\|z\|^2+ \|u\|^2)
	}{
	(t+\|z\|^2+ \|u\|^2)^{\frac{p+m}{2}}
}
\; d u \; d z \; d t.
$$
Now note that the integrand in the innermost integral in the
preceding display is decreasing in $\|u\|$, and recall that
the volume of a ball of radius 
$r$ in $\R^p$ is $\pi^{p/2} r^p / \Gamma(p/2+1)$.
In view of this, the innermost integral in the preceding display 
is bounded from above by
\begin{align*}
\frac{
	\Gamma(\frac{p+m}{2}, t+\|z\|^2+ \tau t\frac{1+\kappa}{m})
	}{
	(t+\|z\|^2+ \tau t\frac{1+\kappa}{m})^{\frac{p+m}{2}}
}
\frac{1}{\delta}
\frac{\pi^{p/2}}{\Gamma(p/2+1)}
\left( 
	\left( (\tau + \delta) t\frac{1+\kappa }{m }\right)^{p/2} -
	\left( \tau t\frac{1+\kappa}{m }\right)^{p/2}
\right),
\end{align*}
and the difference quotient $(I(\tau+\delta)-I(\tau))/\delta$ is
bounded from above by
$$
\frac{1}{\pi^{\frac{p}{2}} \Gamma(\frac{p}{2})\Gamma(\frac{m}{2})}
\frac{(\tau+\delta)^\frac{p}{2} - \tau^\frac{p}{2}}{\delta}
\left(\frac{1+\kappa}{m}\right)^\frac{p}{2} 
\iint t^{\frac{p+m}{2}-1} \frac{ 
	\Gamma\left(\frac{p+m}{2}, t+\|z\|^2 + t \tau \frac{1+\kappa}{m}\right)
}{
\left(t+\|z\|^2 + t \tau \frac{1+\kappa}{m}\right)^\frac{p+m}{2}
}
\; dz \; d t.
$$
Substituting $v$ for $t+ \tau t (1+\kappa)/m$ in the integral in 
the preceding display, and using Lemma~\ref{bigint}, we
see that the upper bound is equal to
$$
\frac{1}{B\left(\frac{p}{2},\frac{m}{2}\right)}
\frac{2}{p} \frac{ (\tau+\delta)^\frac{p}{2}-\tau^\frac{p}{2}}{\delta}
\left(\frac{1+\kappa}{m}\right)^\frac{p}{2}
\left(1+\tau \frac{1+\kappa}{m}\right)^{-\frac{p+m}{2}}.
$$
Obviously, this upper bound converges to the expression on 
the right-hand side of \eqref{tmp0} as $\delta \to 0$.

In a similar fashion, the integrand in the innermost integral
in the display following \eqref{tmp0} is bounded from below  by
\begin{align*}
\frac{
	\Gamma(\frac{p+m}{2}, t+\|z\|^2+ (\tau+\delta)t\frac{1+\kappa}{m})
	}{
	(t+\|z\|^2+ (\tau+\delta) t\frac{1+\kappa}{m})^{\frac{p+m}{2}}
}
\frac{1}{\delta}
\frac{\pi^{p/2}}{\Gamma(p/2+1)}
\left( 
	\left( (\tau + \delta) t\frac{1+\kappa }{m }\right)^{p/2} -
	\left( \tau t\frac{1+\kappa}{m }\right)^{p/2}
\right).
\end{align*}
Arguing as in the preceding paragraph, we thus obtain a lower
bound for the difference quotient $I(\tau+\delta)-I(\tau))/\delta$,
which also converges to the expression on the right-hand side
of \eqref{tmp0} as $\delta\to 0$.
\end{proof}

\section{Proof of Proposition~\ref{PRD2}}
\label{AC}
\label{proofrd2}

The proof of Proposition~\ref{PRD2} is rather straight-forward in
case $p=1$ and more involved in case $p=2$. We begin with
an auxiliary result that we use for both cases. We then
prove Proposition~\ref{PRD2} for the case where $p=1$, for
completeness, and also to motivate the arguments used in the more
challenging case where $p=2$. Following this, we present
a series of lemmata that, taken together, imply the statement
in Proposition~\ref{PRD2} in case $p=2$.

\begin{lemma} \label{lemmaCompact}
Under the assumptions of Proposition~\ref{t2} and for each $\epsilon>0$,
we have
\begin{align*}
&\lim_{\kappa \to 0}
Q_{\kappa,\epsilon} \Big[ 1_C(x,s,\lambda) \;
\Big( P_{\kappa,\epsilon}(L_\kappa(\phi_1)\|x,s,\lambda) - 
	P_{\kappa,\epsilon}(L_\kappa(\phi_0)\|x,s,\lambda)
\Big) 
\Big]
\\
&=\quad
Q_{0,\epsilon} \Big[ 1_C(x,s,\lambda) \;
\Big( P_{0,\epsilon}(L_0(\phi_1)\|x,s,\lambda) - 
	P_{0,\epsilon}(L_0(\phi_0)\|x,s,\lambda)
\Big) 
\Big]
\end{align*}
for any set $C \subseteq \R^p \times (0,\infty)^2$ that is compact
in that space. Moreover, the limit in the preceding display is finite.
These statements continue to hold if the requirement 
in Proposition~\ref{t2} that $p\in \{1,2\}$
is weakened to the requirement that $p\in \N$.
\end{lemma}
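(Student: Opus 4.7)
The plan is a dominated convergence argument on $\R^p\times(0,\infty)^2$. Set
\begin{equation*}
\tilde q_{\kappa,\epsilon}(x,s,\lambda) \;:=\; q_{\kappa,\epsilon}(x,s)\, p_{\kappa,\epsilon}(\lambda\|x,s),
\end{equation*}
so that $Q_{\kappa,\epsilon}[f(x,s,\lambda)] = \iiint f\,\tilde q_{\kappa,\epsilon}\,dx\,ds\,d\lambda$ for any nonnegative measurable $f$; this is possible because the integrand in the lemma depends only on $(x,s,\lambda)$ after the inner $\mu$-integration. Using $\upsilon(\phi(x,s,\cdot,\lambda)) = \int \phi\,d\mu$, the identity
\begin{equation*}
P_{\kappa,\epsilon}(L_\kappa(\phi)\|x,s,\lambda)
\;=\; r_\kappa(cs/m|\lambda)\,\upsilon(\phi(x,s,\cdot,\lambda)) \;-\; \int \phi(x,s,\mu,\lambda)\, p_{\kappa,\epsilon}(\mu\|x,s,\lambda)\, d\mu
\end{equation*}
splits the integrand into a term linear in $\upsilon(\phi_i)$ and a term bounded by $1$ in absolute value. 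From the explicit formulas in Section~\ref{S4.2} one reads off that $\tilde q_{\kappa,\epsilon}(x,s,\lambda)$, $r_\kappa(cs/m|\lambda)$, and $p_{\kappa,\epsilon}(\mu\|x,s,\lambda)$ extend continuously to $\kappa=0$ and are jointly continuous in their arguments. In particular, on $C$ (together with a small neighbourhood $[0,\kappa_0]$ of $\kappa=0$), $\tilde q_{\kappa,\epsilon}$ and $r_\kappa$ are uniformly bounded and converge uniformly on $C$ to their values at $\kappa=0$, while by Scheff\'e's theorem $p_{\kappa,\epsilon}(\cdot\|x,s,\lambda)\to p_{0,\epsilon}(\cdot\|x,s,\lambda)$ in $L^1(d\mu)$ for each $(x,s,\lambda)\in C$.

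This last fact combined with $|\phi_1-\phi_0|\le 1$ gives pointwise convergence on $C$ of $\int(\phi_1-\phi_0)\, p_{\kappa,\epsilon}(\mu\|x,s,\lambda)\, d\mu$ to its $\kappa=0$ analogue, and this family is bounded in absolute value by $1$. Similarly, $\upsilon(\phi_0(x,s,\cdot,\lambda)) = (\pi c s/m)^{p/2}/\Gamma(p/2+1)$ depends only on $s$ and is continuous on $C$. For all three of these contributions the integrand is dominated on $C$ by a constant multiple of $1_C$, which is Lebesgue-integrable, so dominated convergence handles them directly.

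The main obstacle is the remaining piece $\iiint 1_C(x,s,\lambda)\, r_\kappa(cs/m|\lambda)\, \upsilon(\phi_1(x,s,\cdot,\lambda))\,\tilde q_{\kappa,\epsilon}(x,s,\lambda)\, dx\,ds\,d\lambda$, since $\upsilon(\phi_1(x,s,\cdot,\lambda))$ is a priori only finite a.e.\ and not bounded on $C$. To produce an integrable majorant I invoke \eqref{t2.1} at $(\mu,\sigma^2)=(0,1/\lambda)$, which yields
\begin{equation*}
\int \upsilon(\phi_1(x,s,\cdot,\lambda))\,\Bigl(\tfrac{\lambda}{2\pi}\Bigr)^{p/2} e^{-\lambda\|x\|^2/2}\, dx \;\le\; \upsilon(\phi_0(x,s,\cdot,\lambda))
\end{equation*}
for a.e.\ $s$ and every $\lambda>0$. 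Projecting $C$ onto its $x$- and $(s,\lambda)$-factors gives compact sets $C_x\subseteq\R^p$ and $C_{s,\lambda}\subseteq(0,\infty)^2$ with $C\subseteq C_x\times C_{s,\lambda}$. On $C$, the Gaussian weight $(\lambda/(2\pi))^{p/2}e^{-\lambda\|x\|^2/2}$ is bounded below by a positive constant and $\upsilon(\phi_0(x,s,\cdot,\lambda))$ is bounded above, so $\int_{C_x}\upsilon(\phi_1(x,s,\cdot,\lambda))\, dx$ is essentially bounded on $C_{s,\lambda}$; integrating over the bounded set $C_{s,\lambda}$ yields
\begin{equation*}
\iiint 1_C(x,s,\lambda)\, \upsilon(\phi_1(x,s,\cdot,\lambda))\, dx\,ds\,d\lambda \;<\;\infty.
\end{equation*}
Multiplying by the uniform bound for $r_\kappa\,\tilde q_{\kappa,\epsilon}$ on $C\times[0,\kappa_0]$ supplies the required integrable dominating function; dominated convergence then delivers the stated limit, and the same estimate shows the limit is finite. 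None of these steps uses $p\in\{1,2\}$, so the argument goes through for every $p\in\N$, which gives the final sentence of the lemma.
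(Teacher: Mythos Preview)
Your argument is correct and follows the same dominated-convergence strategy as the paper: pointwise convergence of the integrand from continuity in $\kappa$, together with an integrable majorant on the compact set $C$, where assumption~\eqref{t2.1} is used to control the unbounded piece $\upsilon(\phi_1)$. The only tactical difference is in how that majorant is built: you apply \eqref{t2.1} at $(\mu,\sigma^2)=(0,1/\lambda)$ and lower-bound the Gaussian weight on $C$ to get $\int_{C_x}\upsilon(\phi_1)\,dx$ bounded, while the paper instead replaces $q_{\kappa,\epsilon}(x,s)\,p_{\kappa,\epsilon}(\lambda\|x,s)$ on $C$ by a constant multiple of the proper density $p_{1,\epsilon}(x,s)\,p_{1,\epsilon}(\lambda\|x,s)$ and bounds $P_{1,\epsilon}(\upsilon(\phi_1))$ globally via \eqref{t2.1} and Lemma~\ref{smoments}; your route is slightly more direct and avoids Lemma~\ref{smoments}, though you should note that $\tilde q_{\kappa,\epsilon}$ carries the indicator $\{\lambda>\epsilon\}$ and is therefore not literally continuous on $C$ (the paper handles this by passing to $\tilde C=C\cap\{\lambda\ge\epsilon\}$), but this does not affect the boundedness or pointwise-convergence claims you need.
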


\begin{proof}
Fix $p\in \N$.
The expression of the left-hand side of the preceding display is
the limit, as $\kappa \to 0$, of the integral of
$$
\Big(
P_{\kappa,\epsilon}(L_\kappa(\phi_1)\|x,s,\lambda) - 
	P_{\kappa,\epsilon}(L_\kappa(\phi_0)\|x,s,\lambda)
\Big)\;\;
p_{\kappa,\epsilon}(\lambda\|x,s)
q_{\kappa,\epsilon}(x,s) 
$$
over $(x,s,\lambda) \in C$ 
with respect to Lebesgue measure.
It suffices to show
that (i) the expression in
preceding display converges pointwise to 
the same expression with $\kappa = 0$,
and that (ii) the expression in the preceding display 
is bounded in 
absolute value, for each $(x,s,\lambda)\in C$ 
and each sufficiently small $\kappa$, 
e.g., $\kappa \leq 1$, by a function that is integrable with respect
to Lebesgue measure on $C$. 
With this, the result follows from the 
dominated convergence theorem (where the reference measure is
Lebesgue measure on $C$).

Define $\tilde{p}_{\kappa,\epsilon}(\lambda\|x,s)$ as
$p_{\kappa,\epsilon}(\lambda\|x,s)$ but with the indicator 
$\{\lambda>\epsilon\}$ replaced by $\{\lambda\geq\epsilon\}$;
cf. Section~\ref{S4.2}. Moreover, set 
$\tilde{C} = C \cap \{(x,s,\lambda): \lambda \geq \epsilon\}$.
Then integrals over $C$ with respect to $p_{\kappa,\epsilon}(\lambda\|x,s)$
coincide with integrals over $\tilde{C}$  with respect to 
$\tilde{p}_{\kappa,\epsilon}(\lambda\|x,s)$ (because 
$\tilde{p}_{\kappa,\epsilon}(\lambda\|x,s) = p_{\kappa,\epsilon}(\lambda\|x,s)$
for Lebesgue-almost all $\lambda$, and because
$\tilde{p}_{\kappa,\epsilon}(\lambda\|x,s) = 0$ whenever
$\lambda < \epsilon$).
In particular, it suffices to prove (i) and (ii) with
$\tilde{p}_{\kappa,\epsilon}(\lambda\|x,s)$  and $\tilde{C}$
replacing
$p_{\kappa,\epsilon}(\lambda\|x,s)$  and $C$, respectively.
Also, note that $\tilde{C}$ is a compact subset of $\R^p\times(0,1)^2$,
and that $\tilde{p}_{\kappa,\epsilon}(\lambda\|x,s)$  is positive and
continuous on $\tilde{C}$.

For (i), fix $(x,s,\lambda)\in \tilde{C}$.
Obviously, we have 
$\tilde{p}_{\kappa,\epsilon}(\lambda\|x,s) q_{\kappa,\epsilon}(x,s) \to
\tilde{p}_{0,\epsilon}(\lambda\|x,s) q_{0,\epsilon}(x,s)$ as $\kappa\to 0$.
The first factor in the preceding display
can be written as
\begin{align*}
&r_\kappa(c s/m|\lambda) \Big(
\upsilon(\phi_1(x,s,\cdot,\lambda)) -
\upsilon(\phi_0(x,s,\cdot,\lambda))\Big)
\\
&
-\int p_{\kappa,\epsilon}(\mu\|x,s,\lambda) \Big(
\phi_1(x,s,\mu,\lambda) -
\phi_0(x,s,\mu,\lambda)\Big)\; d \mu.
\end{align*}
Obviously, the first term in the preceding display converges
to the same term with $\kappa=0$, because
$r_\kappa(c s/m|\lambda)$ converges to 
$r_0(c s/m|\lambda)$ as
$\kappa\to 0$. For the second term, we note that
$p_{\kappa,\epsilon}(\mu\|x,s,\lambda)$ converges to
$p_{0,\epsilon}(\mu\|x,s,\lambda)$ for each $\mu$, so that
the corresponding (conditional) probability measures converge weakly.
Because $| \phi_1(x,s,\mu,\lambda) - \phi_0(x,s,\mu,\lambda)| \leq 1$,
the left-hand side of this inequality,
when viewed as a random variable with density
$p_{\kappa,\epsilon}(\mu\|x,s,\lambda)$, is uniformly integrable.
It follows that also the second expression in the preceding display
converges as required. This proves (i).

For (ii), we first note that 
$\tilde{p}_{\kappa,\epsilon}(\lambda\|x,s)$, 
as a function of 
$(x,s,\lambda)$ and $\kappa$, 
is continuous and positive on the compact set $\tilde{C}\times [0,1]$.
It follows that
$0 < \tilde{p}_\ast \leq \tilde{p}_{\kappa,\epsilon}(\lambda\|x,s)
\leq \tilde{p}^\ast < \infty$ for each $(x,s,\lambda)\in \tilde{C}$ 
and each $\kappa\in [0,1]$,
for some constants $\tilde{p}_\ast$ and $\tilde{p}^\ast$.
By a similar argument, we also have 
$0 < q_\ast \leq q_{\kappa,\epsilon}(x,s) \leq q^\ast < \infty$ and
$0 < r_\ast \leq r_\kappa(c s/m|\lambda) \leq r^\ast<\infty$
for some constants $q_\ast$, $q^\ast$, $r_\ast$, and $r^\ast$.
Moreover, recall that  
so that $q_{1,\epsilon}(x,s) = K_{1,\epsilon} p_{1,\epsilon}(x,s)$.
Our aim is to bound the product of 
$\tilde{p}_{\kappa,\epsilon}(\lambda\|x,s) q_{\kappa,\epsilon}(x,s)$ and
the expression in the preceding display, in absolute value and
for each $\kappa\in [0,1]$, by 
a function that is integrable on $\tilde{C}$.
To derive the desired bound, we first note
that $|\phi_1 - \phi_0|\leq 1$, so that the second term in the
preceding display is bounded, in absolute value, by $1$;
and the product of this  upper bound and of 
$\tilde{p}_{\kappa,\epsilon}(\lambda\|x,s) q_{\kappa,\epsilon}(x,s)$ 
is bounded by $\tilde{p}^\ast q^\ast$. Clearly, this (constant) upper bound is
integrable with respect to Lebesgue measure on $\tilde{C}$.
The product of 
$\tilde{p}_{\kappa,\epsilon}(\lambda\|x,s) q_{\kappa,\epsilon}(x,s)$ and 
the first term in the preceding display  is bounded, in absolute value,
by 
$$
\frac{\tilde{p}^\ast}{\tilde{p}_\ast}\;
\frac{q^\ast}{q_\ast}\;
K_{1,\epsilon}^{-1} \; 
\tilde{p}_{1,\epsilon}(\lambda\|x,s) \; p_{1,\epsilon}(x,s) \;
\Big(
	\upsilon( \phi_1(x,s,\cdot,\lambda)) +
	\upsilon( \phi_0(x,s,\cdot,\lambda)) 
\Big)
$$
for each $(x,s,\lambda)\in \tilde{C}$ and each $\kappa\in [0,1]$. The integral
of this upper bound with respect to Lebesgue measure on $\tilde{C}$,
and indeed also with respect to Lebesgue measure on 
$\R^p \times (0,\infty)^2$, is finite, because
\begin{align*}
& P_{1,\epsilon} \Big(\upsilon(\phi_1(x,s,\cdot,\lambda) + 
	\upsilon(\phi_0(x,s,\cdot,\lambda))\Big)
\quad\leq \quad 2 P_{1,\epsilon}
	\Big( \upsilon(\phi_0(x,s,\cdot,\lambda))\Big)
\\
&=\quad 2 \frac{ ( \pi c /m)^{p/2}}{\Gamma(p/2+1)} P_{1,\epsilon}( s^{p/2})
\quad<\quad \infty.
\end{align*}
Here, the first inequality follows because $\phi_1$ is as in Proposition~\ref{t2},
the equality holds because
$\upsilon(\phi_0(x,s,\cdot,\lambda)) = ( \pi c s/m)^{p/2} / \Gamma(p/2+1)$,
and the second inequality holds in view of Lemma~\ref{smoments}.
\end{proof}

\begin{proof}[Proof of Proposition~\ref{PRD2} in case $p=1$]
For some set $C\subseteq \R\times (0,\infty)^2$ that will be chosen
momentarily, we can write the risk difference of interest, i.e.,
$Q_{\kappa,\epsilon}(L_\kappa(\phi_1)) - 
	Q_{\kappa,\epsilon}(L_\kappa(\phi_0))$, as
\begin{align}
\label{tmp1}
&Q_{\kappa,\epsilon} \Big[ 1_C(x,s,\lambda) \;
\Big( P_{\kappa,\epsilon}(L_\kappa(\phi_1)\|x,s,\lambda) - 
	P_{\kappa,\epsilon}(L_\kappa(\phi_0)\|x,s,\lambda)
\Big) 
\Big] \;+
\\
\label{tmp2}
&Q_{\kappa,\epsilon} \Big[ 1_{C^c}(x,s,\lambda) \;
\Big( P_{\kappa,\epsilon}(L_\kappa(\phi_1)\|x,s,\lambda) - 
	P_{\kappa,\epsilon}(L_\kappa(\phi_0)\|x,s,\lambda)
\Big) 
\Big].
\end{align}
We will choose $C$ and constants $\kappa>0$ and $\epsilon>0$
so that the expression
in the preceding display is positive.

By assumption, $\phi_1(x,s,\mu,\lambda)$ and $\phi_0(x,s,\mu,\lambda)$
differ on a set of $(x,s,\mu,\lambda)$'s, i.e., on a subset of
$\R\times (0,\infty)\times \R\times(0,\infty)$,
of positive Lebesgue measure. We can choose $\epsilon>0$ so that
$\phi_1(x,s,\mu,\lambda)$ and $\phi_0(x,s,\mu,\lambda)$ also
differ on a subset of
$\R\times (0,\infty)\times \R\times(\epsilon,\infty)$,
of positive Lebesgue measure
(in view of the monotone convergence theorem).

As a preliminary consideration, we note that the
conditional risk difference 
$P_{0,\epsilon}(L_0(\phi_1)\|x,s,\lambda) - 
	P_{0,\epsilon}(L_0(\phi_0)\|x,s,\lambda)$
can be written as
\begin{equation}\label{tmp3}
\int (r_0( c s/m| \lambda) - 
	p_{0,\epsilon}(\mu\|x,s,\lambda)) 
	( \phi_1(x,s,\mu,\lambda) - \phi_0(x,s,\mu,\lambda))
\; d \mu
\end{equation}
by arguing as in the proof of Proposition~\ref{bayesprocedure}.
It is now elementary to verify that the integrand in the preceding display
is positive if $\phi_1(x,s,\mu,\lambda) \neq \phi_0(x,s,\mu,\lambda)$ and
zero otherwise.  [To this end, recall that $\phi_0(x,s,\mu,\lambda) = 1$ if 
the first factor in the integrand is negative and zero otherwise, and
that $0\leq \phi_1(x,s,\mu,\lambda) \leq 1$.]
It follows that the expression in the preceding display,
i.e., the conditional risk difference 
$P_{0,\epsilon}(L_0(\phi_1)\|x,s,\lambda) - 
	P_{0,\epsilon}(L_0(\phi_0)\|x,s,\lambda)$,
is non-negative for each $(x,s,\lambda)$.
And because $\phi_1(x,s,\mu,\lambda)$ and $\phi_0(x,s,\mu,\lambda)$ 
differ on a set of 
$(x,s,\mu,\lambda)$'s of positive Lebesgue measure,  
it is easy to see that 
$P_{0,\epsilon}(L_0(\phi_1)\|x,s,\lambda) - 
	P_{0,\epsilon}(L_0(\phi_0)\|x,s,\lambda)$,
is positive on a subset of $(x,s,\lambda)$'s 
of $\R\times (0,\infty)\times(\epsilon,\infty)$
of positive Lebesgue measure.
Noting that $p_{0,\epsilon}(\lambda\|x,s) q_{0,\epsilon}(x,s)$ is
positive on $\R\times (0,\infty)\times (\epsilon,\infty)$,
it follows that 
$$
\Delta \quad=\quad Q_{0,\epsilon} \Big[ 
 P_{0,\epsilon}(L_0(\phi_1)\|x,s,\lambda) - 
	P_{0,\epsilon}(L_0(\phi_0)\|x,s,\lambda)
\Big]\quad>\quad 0.
$$

To bound \eqref{tmp1},  we fix $\delta$ so that $0<\delta < \Delta$ and
choose a subset $C$ of $\R\times (0,\infty)^2$ so that
$$
Q_{0,\epsilon} \Big[ 1_C(x,s,\lambda)\;
\Big( P_{0,\epsilon}(L_0(\phi_1)\|x,s,\lambda) - 
	P_{0,\epsilon}(L_0(\phi_0)\|x,s,\lambda)
\Big) \Big] \quad > \quad \delta
$$
(using the considerations in the preceding paragraph and
the monotone convergence theorem).
We may also assume that $C$ is a compact
in $\R\times(0,\infty)^2$.
Now Lemma~\ref{lemmaCompact} entails that
the expression in \eqref{tmp1} is larger than $\delta/2$ 
for sufficiently small $\kappa$, i.e., $\kappa < \kappa_1$ for some
$\kappa_1>0$.

The expression in \eqref{tmp2} is bounded from below by
$Q_{\kappa,\epsilon}(L_\kappa(\phi_\kappa)) - 
	Q_{\kappa,\epsilon}(L_\kappa(\phi_0))$,
because the conditional risk 
$P_{\kappa,\epsilon}(L_\kappa(\phi)\|x,s,\lambda)$ 
is minimized
for $\phi=\phi_\kappa$
(argue as in the discussion surrounding \eqref{tmp3} but now
with $\kappa>0$).
Proposition~\ref{PRD1} entails that  the lower bound
$Q_{\kappa,\epsilon}(L_\kappa(\phi_\kappa)) - 
	Q_{\kappa,\epsilon}(L_\kappa(\phi_0))$,
and hence the expression in \eqref{tmp2}, is larger than $-\delta/2$
for sufficiently small $\kappa$, i.e., $\kappa< \kappa_2$, for some
$\kappa_2>0$.

Setting $\kappa_\ast = \min\{\kappa_1,\kappa_2\}$, we thus see
that the sum in \eqref{tmp1}-\eqref{tmp2} is positive
whenever $\kappa < \kappa_\ast$.
\end{proof}

\begin{proposition} \label{PRDa2}
Under the assumptions of Proposition~\ref{PRD2}
with $p=2$, the constant $\Delta$ defined by
$$
\Delta  \quad =\quad  Q_{0,\epsilon}( L_0(\phi_1)) - 
	Q_{0,\epsilon}(L_0(\phi_0))
$$
is well-defined, positive, and finite,
provided that $\epsilon>0$ is sufficiently small.
\end{proposition}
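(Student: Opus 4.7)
The plan is to establish well-definedness, positivity, and finiteness of $\Delta$ in turn. For well-definedness and non-negativity, Proposition~\ref{bayesprocedure} identifies $\phi_0$ as the pointwise minimizer of $P_{0,\epsilon}(L_0(\phi)\|x,s,\lambda)$ over $\phi$, so the conditional risk difference
$$D(x,s,\lambda) \;=\; \int \bigl(\phi_1(x,s,\mu,\lambda)-\phi_0(x,s,\mu,\lambda)\bigr)\bigl(r_0(c s/m|\lambda) - p_{0,\epsilon}(\mu\|x,s,\lambda)\bigr)\,d\mu$$
is non-negative everywhere, and $\Delta=\int D\,q_{0,\epsilon}(x,s,\lambda)\,d(x,s,\lambda)\in[0,\infty]$ as a Lebesgue integral of a non-negative integrand. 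For positivity I argue exactly as in the $p=1$ case just given: the integrand of $D$ is strictly positive wherever $\phi_1\neq\phi_0$ (off the negligible boundary $\|\mu-x\|^2 = cs/m$), so by monotone convergence I can choose $\epsilon>0$ small enough that the disagreement set still has positive Lebesgue measure in $\{\lambda>\epsilon\}$; since $q_{0,\epsilon}>0$ on its support, $\Delta>0$ follows by Fubini.

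The main difficulty is showing $\Delta<\infty$. Crude bounds of the form $D\leq 1+r_0(cs/m|\lambda)\bigl(\upsilon(\phi_1)+\upsilon(\phi_0)\bigr)$ are useless because $\int q_{0,\epsilon}(x,s,\lambda)\,dx=\infty$ (the improper $\mu$-prior translates to infinite marginal mass in $x$). My plan is instead to compare $\Delta$ with the proper-prior Bayes gap and pass to the limit $\kappa\to 0$. For $\kappa>0$, set $D_\kappa=P_{\kappa,\epsilon}(L_\kappa(\phi_1)\|x,s,\lambda)-P_{\kappa,\epsilon}(L_\kappa(\phi_0)\|x,s,\lambda)$. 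Although $D_\kappa$ may change sign (since $\phi_\kappa$, not $\phi_0$, minimizes the posterior risk when $\kappa>0$), Proposition~\ref{bayesprocedure} applied at level $\kappa$ yields $P_{\kappa,\epsilon}(L_\kappa(\phi_0)\|x,s,\lambda)\geq P_{\kappa,\epsilon}(L_\kappa(\phi_\kappa)\|x,s,\lambda)$, and therefore the pointwise bound $D_\kappa^+\leq P_{\kappa,\epsilon}(L_\kappa(\phi_1)\|x,s,\lambda)-P_{\kappa,\epsilon}(L_\kappa(\phi_\kappa)\|x,s,\lambda)$ (this right-hand side is non-negative by Bayes, so it dominates $D_\kappa^+$ whether $D_\kappa\geq 0$ or $D_\kappa<0$). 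Integrating against $q_{\kappa,\epsilon}(x,s)p_{\kappa,\epsilon}(\lambda\|x,s)$ and using \eqref{key} together with Proposition~\ref{PRD1},
$$Q_{\kappa,\epsilon}(D_\kappa^+) \;\leq\; Q_{\kappa,\epsilon}(L_\kappa(\phi_0))-Q_{\kappa,\epsilon}(L_\kappa(\phi_\kappa)) \;=\; K_{\kappa,\epsilon}\bigl[F_{p,m}(c(1+\kappa)/p)-F_{p,m}(c/p)\bigr].$$
Because $K_{\kappa,\epsilon}=O(\kappa^{-1})$ and the $F$-difference is $O(\kappa)$ in the critical case $p=2$, the right-hand side converges to a finite constant as $\kappa\to 0$.

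To conclude, $D_\kappa\to D\geq 0$ pointwise (using $\mu_\kappa\to x$, $(1+\kappa)\lambda\to\lambda$, dominated convergence for the $p_{\kappa,\epsilon}(\mu\|x,s,\lambda)$-integral, and $|\phi_1-\phi_0|\leq 1$), and similarly $q_{\kappa,\epsilon}(x,s)p_{\kappa,\epsilon}(\lambda\|x,s)\to q_{0,\epsilon}(x,s)p_{0,\epsilon}(\lambda\|x,s)$ pointwise from the explicit formulae of Section~\ref{S4.2}. Fatou's lemma applied to the non-negative sequence $D_\kappa^+\,q_{\kappa,\epsilon}(x,s)p_{\kappa,\epsilon}(\lambda\|x,s)$ then yields $\Delta\leq\liminf_{\kappa\to 0}Q_{\kappa,\epsilon}(D_\kappa^+)<\infty$. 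The main obstacle will be the delicate asymptotic balance $K_{\kappa,\epsilon}\cdot O(\kappa)=O(1)$ that is specific to $p=2$; for $p\geq 3$ the analogous product diverges, which is consistent with the failure of admissibility in higher dimensions and the onset of the Stein phenomenon for set-estimation.
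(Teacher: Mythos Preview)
Your argument is correct. The treatment of well-definedness and positivity matches the paper's exactly (the sign analysis of the integrand in $D(x,s,\lambda)$, together with the choice of $\epsilon$ via monotone convergence so that the disagreement set meets $\{\lambda>\epsilon\}$).

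For finiteness you take a genuinely different route from the paper. The paper argues by contradiction through a compact-set approximation: for any $\delta<\Delta$ it produces a compact $C$ on which, via Lemma~\ref{lemmaCompact}, the restricted $Q_{\kappa,\epsilon}$-risk difference exceeds $\delta/2$ for small $\kappa$; the complementary piece is bounded below by minus the Bayes gap $-\rho$, and then \eqref{key} forces $\delta/2-\rho\leq 0$, whence $\Delta\leq 2\rho$. Your argument bypasses the compact set entirely: you bound $D_\kappa^+$ pointwise by the conditional Bayes gap $P_{\kappa,\epsilon}(L_\kappa(\phi_1)\|x,s,\lambda)-P_{\kappa,\epsilon}(L_\kappa(\phi_\kappa)\|x,s,\lambda)$, integrate, use \eqref{key} to pass from $\phi_1$ to $\phi_0$, invoke Proposition~\ref{PRD1} to see the resulting bound is $O(1)$ when $p=2$, and then apply Fatou to the non-negative sequence $D_\kappa^+\,q_{\kappa,\epsilon}\,p_{\kappa,\epsilon}$. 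Both routes rest on the same two ingredients (\eqref{key} and Proposition~\ref{PRD1}), and both yield the same quantitative bound $\Delta\leq\lim_{\kappa\to 0}K_{\kappa,\epsilon}[F_{2,m}(c(1+\kappa)/2)-F_{2,m}(c/2)]$. Your Fatou argument is a bit more direct here; the paper's compact-set machinery, however, is not wasted since Lemma~\ref{lemmaCompact} is reused in Lemma~\ref{M} immediately afterwards.
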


\begin{proof}
The proof is identical to the proof of Proposition~\ref{PRD2} for
the case $p=1$, except for the last step. In the following, when we
refer to expressions like \eqref{tmp1} or \eqref{tmp2}, etc., from that
proof, these expressions are understood to be computed for the case
considered here, i.e., for $p=2$.

For sufficiently small $\epsilon>0$, we see that
$\phi_1(x,s,\mu,\lambda)$ and $\phi_0(x,s,\mu,\lambda)$ differ
on a subset of $\R^2 \times (0,\infty)\times \R^2 \times(0,\infty)$
of positive Lebesgue measure.
Arguing as in the discussion surrounding \eqref{tmp3},
we obtain that $\Delta$ is well-defined and positive. 
It remains to show that $\Delta$ is finite.

For each $\delta<\Delta$, we can find a compact subset
$C$ of $\R^2 \times (0,\infty)^2$,
so that the expression in \eqref{tmp1} is larger than
$\delta/2$ for sufficiently small  $\kappa>0$.
And the expression in \eqref{tmp2} is bounded from below by
$Q_{\kappa,\epsilon}(L_\kappa(\phi_\kappa)) - 
	Q_{\kappa,\epsilon}(L_\kappa(\phi_0))$,
where this lower bound here converges to a finite (negative) constant
that we denote by $-\rho$; cf. Proposition~\ref{PRD1}.
Taken together, we see that the sum in \eqref{tmp1}--\eqref{tmp2}
is bounded from below by $\delta/2 - \rho$. 
On the other hand, $\phi_1$ satisfies the assumptions of  Proposition~\ref{t2}, 
so that the sum in \eqref{tmp1}--\eqref{tmp2} is non-positive;
cf. \eqref{key}.
It follows that $0 \geq \delta/2 - \rho$, i.e., $\delta \leq 2 \rho$. 
Since this holds for
each $\delta<\Delta$, we get that $\Delta \leq 2 \rho$, whence $\Delta$
is finite as claimed.
\end{proof}

Throughout the following, assume that the assumptions
of Proposition~\ref{PRD2} are satisfied and that $p=2$,
and fix $\epsilon>0$ so that Proposition~\ref{PRDa2} applies.
In particular, the constant $\Delta$ defined in that proposition
is a positive real number.
For each $\kappa\geq 0$,
decompose 
$Q_{\kappa,\epsilon}(L_\kappa(\phi_1)) - 
	Q_{\kappa,\epsilon}(L_\kappa(\phi_0))$ as
$$
Q_{\kappa,\epsilon}(L_\kappa(\phi_1)) - 
	Q_{\kappa,\epsilon}(L_\kappa(\phi_0))
\quad=\quad
M_{\kappa,\epsilon} -
N^{(1)}_{\kappa,\epsilon} -
N^{(2)}_{\kappa,\epsilon} -
N^{(3)}_{\kappa,\epsilon},
$$
where $M_{\kappa,\epsilon}$ is as in \eqref{tmp1} (with $p=2$) for some
compact subset $C$ of $\R^2\times (0,\infty)^2$, and where
the sum $-N^{(1)}_{\kappa,\epsilon} - 
	N^{(2)}_{\kappa,\epsilon} - N^{(3)}_{\kappa,\epsilon}$
further decomposes the expression in \eqref{tmp2} (with $p=2$) as follows:
For each $(x,s,\lambda)$, 
write the conditional risk difference in \eqref{tmp2} as the expression
in \eqref{tmp3} with $\kappa$ replacing $0$, decompose the range of $\mu$,
i.e., $\R^2$, into three disjoints sets $A^{(i)}(x,s)$ ($i=1,2,3$) 
for each $(x,s)$, and
set
\begin{align*}
&N^{(i)}_{\kappa,\epsilon}\quad=\quad
\\&
\iiint\limits_{C^c} 
\int\limits_{A^{(i)}(x,s)} \Big(r_\kappa(c s/m|\lambda) -
	p_{\kappa,\epsilon}( \mu\|x,s,\lambda)\Big) 
	\Big(\phi_0(x,s,\mu,\lambda) - \phi_1(x,s,\mu,\lambda)\Big) 
	\; d\mu\; 
\\&
\qquad\qquad\qquad 
	p_{\kappa,\epsilon}(\lambda\|x,s) \; d \lambda \;
	q_{\kappa,\epsilon}(x,s)\; d x\; d s
\end{align*}
for $i=1,2,3$,
where
\begin{align*}
A^{(1)}(x,s) & \quad = \quad \{\mu:\; \|\mu - x\|^2 < c s/m\},\\
A^{(2)}(x,s) & \quad = \quad \{\mu:\; c s/m \leq \|\mu- x\|^2 < (c+1)s/m\}
	\text{, and}\\
A^{(3)}(x,s) & \quad = \quad \{\mu:\; (c+1) s/m \leq \|\mu- x\|^2 \}.
\end{align*}
Note that we have $M_{0,\epsilon}\geq 0$ and 
$N_{0,\epsilon}^{(i)}\leq 0$ for $i=1,2,3$;
cf. the discussion surrounding \eqref{tmp3}.
For later use, we also note that $M_{0,\epsilon}$ and 
$-N_{0,\epsilon}^{(i)}$ for $i=1,2,3$
are non-decreasing in $C$
(in the sense that, say, $M_{0,\epsilon}$, 
does not decrease if $C$ is replaced
by a superset $\tilde{C}$ of $C$). 
Moreover, $M_{0,\epsilon}$ approaches
the constant
$\Delta$ from Proposition~\ref{PRDa2}
and the $N_{0,\epsilon}^{(i)}$'s 
approach $0$ from below as $C$ increases.
In other words, for fixed $\rho>0$, we have
$\Delta-\rho < M_{0,\epsilon} \leq \Delta$ and
$-\rho < N_{0,\epsilon}^{(i)} \leq 0$ for $i=1,2,3$,
provided only that $C\subseteq \R^2 \times (0,\infty)^2$ is sufficiently
large;
and without loss of generality, we may always assume that
$C$ is compact in that space.
The next four results show that we can choose the set $C$
and the constant $\kappa>0$, so that
$M_{\kappa,\epsilon} > \Delta/2$ and so
that $N^{(i)}_{\kappa,\epsilon} < \Delta/6$ for $i=1,2,3$, and thus
prove Proposition~\ref{PRD2} in the case where $p=2$.
In the following, when we say that the constant $\kappa$ is 
sufficiently small, we mean that $\kappa<\kappa^\ast$ for
some finite number $\kappa^\ast>0$. Similarly, when we say that a set $C$
is sufficiently large, we mean that $C^\ast \subseteq C$
for some bounded set $C^\ast\neq \emptyset$.

\begin{lemma}
\label{M}
Under the assumptions of Proposition~\ref{PRD2} with $p=2$,
we have $M_{\kappa,\epsilon} > \Delta/2$ provided only that  the set
$C\subseteq \R^2\times (0,\infty)^2$ is sufficiently
large and compact, and that $\kappa$ is sufficiently small
and positive (where $\Delta$ and $\epsilon$ are as 
in Proposition~\ref{PRDa2}).
\end{lemma}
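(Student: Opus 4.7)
The plan is to combine two ingredients that have already been assembled in the excerpt: the monotonicity of $M_{0,\epsilon}$ in the set $C$, and the continuity-in-$\kappa$ result from Lemma~\ref{lemmaCompact}. There is essentially no new content to prove; the job is to stitch the two facts together in the correct order of quantifiers.

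First I would fix the compact set $C$. The discussion preceding Lemma~\ref{M} states that $M_{0,\epsilon}$ is non-decreasing in $C$ and satisfies $\Delta - \rho < M_{0,\epsilon} \leq \Delta$ for all sufficiently large compact $C \subseteq \R^2 \times (0,\infty)^2$, for any fixed $\rho > 0$. Taking $\rho = \Delta/4$ (permissible since $\Delta > 0$ by Proposition~\ref{PRDa2}), I choose a compact $C^\ast$ for which
$$
M_{0,\epsilon} \quad > \quad \frac{3\Delta}{4}.
$$

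Second, with this compact $C^\ast$ now fixed, I apply Lemma~\ref{lemmaCompact}, which is explicitly valid for all $p \in \N$, hence in particular for $p = 2$. It yields
$$
\lim_{\kappa \to 0} M_{\kappa,\epsilon} \quad = \quad M_{0,\epsilon},
$$
and this limit is finite. Consequently there exists $\kappa^\ast > 0$ such that $|M_{\kappa,\epsilon} - M_{0,\epsilon}| < \Delta/4$ whenever $0 < \kappa < \kappa^\ast$. Combining,
$$
M_{\kappa,\epsilon} \quad > \quad M_{0,\epsilon} - \frac{\Delta}{4} \quad > \quad \frac{3\Delta}{4} - \frac{\Delta}{4} \quad = \quad \frac{\Delta}{2},
$$
which is the claim.

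The only thing requiring care is the order of quantifiers: the compact set $C^\ast$ must be selected first, and only afterwards may $\kappa$ be taken small; this matches the hypothesis of Lemma~\ref{lemmaCompact}, which requires the compact set to be fixed before passing to the limit in $\kappa$. Since Lemma~\ref{lemmaCompact} handles that limit for any fixed compact set, there is no genuine obstacle here.
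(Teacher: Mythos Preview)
Your argument is correct and is precisely the approach the paper intends: the paper's own proof simply says to argue ``as in the paragraph following \eqref{tmp3} but now with $p=2$, mutatis mutandis,'' and you have carried out exactly that adaptation---first choosing a large compact $C$ so that $M_{0,\epsilon}$ is close to $\Delta$ (using the monotonicity recorded before Lemma~\ref{M}), and then invoking Lemma~\ref{lemmaCompact} for the limit in $\kappa$. Your explicit choice of $\rho=\Delta/4$ makes the quantifier bookkeeping transparent.
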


\begin{proof}
The result is derived by arguing as in the paragraph following
\eqref{tmp3} but now with $p=2$, mutatis mutandis.
\end{proof}

\begin{lemma}
\label{N1}
Under the assumptions of Proposition~\ref{PRD2} with $p=2$,
we have 
$N_{\kappa,\epsilon}^{(1)} < \Delta/6$ provided only that the set
$C\subseteq \R^2\times (0,\infty)^2$ is sufficiently large,    
and that $\kappa$ is sufficiently small and positive
(where $\Delta$ and $\epsilon$ are as in Proposition~\ref{PRDa2}).
\end{lemma}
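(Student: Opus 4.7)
The approach rests on two structural observations. First, on $A^{(1)}(x,s)$ one has $\phi_0 = 1$ (since $\|\mu - x\|^2 < cs/m$ there) and hence $\phi_0 - \phi_1 = 1 - \phi_1 \in [0,1]$. Second, because $p_{\kappa,\epsilon}(\mu\|x,s,\lambda) = r_\kappa(\|\mu-\mu_\kappa\|^2|\lambda)$ is radially decreasing in $\|\mu-\mu_\kappa\|$, the sign of the kernel $r_\kappa(cs/m|\lambda) - p_{\kappa,\epsilon}(\mu\|x,s,\lambda)$ is governed by whether $\mu$ lies inside the ball $B_\kappa(x,s) = \{\mu:\|\mu-\mu_\kappa\|^2 < cs/m\}$. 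At $\kappa=0$ we have $\mu_0 = x$, so $B_0 = A^{(1)}$ and the integrand defining $N^{(1)}$ is pointwise non-positive; in particular $N^{(1)}_{0,\epsilon}\leq 0$, and by monotone convergence $|N^{(1)}_{0,\epsilon}|$ can be made arbitrarily small by enlarging $C$, with finiteness of the relevant $\kappa=0$ integrals guaranteed by Proposition~\ref{PRDa2}.

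The plan is then to establish the continuity statement $N^{(1)}_{\kappa,\epsilon}\to N^{(1)}_{0,\epsilon}$ as $\kappa\to 0$; the conclusion then follows by first choosing $C$ large enough that $|N^{(1)}_{0,\epsilon}| < \Delta/12$ and then $\kappa$ small enough that $|N^{(1)}_{\kappa,\epsilon} - N^{(1)}_{0,\epsilon}| < \Delta/12$. For this continuity I decompose $N^{(1)}_{\kappa,\epsilon} = \tilde{N}^{(1)}_{\kappa,\epsilon} - N^{(1)}_{\kappa,\epsilon,C}$, where $\tilde{N}^{(1)}_{\kappa,\epsilon}$ denotes the same integral but taken over all of $\R^2\times(0,\infty)^2$ (rather than $C^c$) and $N^{(1)}_{\kappa,\epsilon,C}$ the integral over the compact set $C$. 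Lemma~\ref{lemmaCompact} delivers convergence of the second term to $N^{(1)}_{0,\epsilon,C}$ as $\kappa\to 0$, reducing the problem to convergence of the full-space integral $\tilde{N}^{(1)}_{\kappa,\epsilon}$.

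The main obstacle lies in this last step: a naive dominated-convergence argument fails because the improper prior $q_{\kappa,\epsilon}$ has total mass $K_{\kappa,\epsilon}\sim\kappa^{-1}$ as $\kappa\to 0$ when $p=2$, so $|I_\kappa|$ cannot be dominated uniformly in $L^1$. The resolution exploits near-cancellation in the inner $\mu$-integral on $A^{(1)}$: the positive contribution is supported on the sliver $A^{(1)}\cap B_\kappa^c$, whose Lebesgue measure is bounded (by a short planar computation, both balls having the same radius $\sqrt{cs/m}$ and centres at distance $\|x-\mu_\kappa\| = \kappa\|x\|/(1+\kappa)$) by $2\sqrt{cs/m}\cdot\kappa\|x\|/(1+\kappa)$, and is nearly balanced by the corresponding loss of negative contribution on the symmetric sliver $B_\kappa\setminus A^{(1)}$. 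A Taylor-type expansion of the $\mu$-integral around $\mu_\kappa$, combined with the explicit form of $q_{\kappa,\epsilon}(x,s)$ in Section~\ref{S4.2}, whose decay in $\|x\|$ accelerates with $\kappa$ at a rate sufficient to compensate the $K_{\kappa,\epsilon}$-growth, should supply a dominating function integrable uniformly for $\kappa$ in a neighbourhood of zero and thereby deliver $\tilde{N}^{(1)}_{\kappa,\epsilon}\to\tilde{N}^{(1)}_{0,\epsilon}$. Packaging these estimates with posterior moments of $\lambda$ (a truncated Gamma) and the $s$-moment bounds of Lemma~\ref{smoments} closes the argument.
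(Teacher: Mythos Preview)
Your reduction to proving $\tilde N^{(1)}_{\kappa,\epsilon}\to\tilde N^{(1)}_{0,\epsilon}$ is sound in principle, but the sketch you give for this full-space convergence does not work, and this is the heart of the matter. The ``near-cancellation'' you invoke is illusory: the inner integral runs over the fixed region $A^{(1)}(x,s)$, not over the moving ball $B_\kappa$, so the sliver $B_\kappa\setminus A^{(1)}$ is simply not in the domain of integration and cannot balance anything. What actually happens is that on $A^{(1)}\cap B_\kappa^c$ the kernel switches sign from negative to positive, while on $A^{(1)}\cap B_\kappa$ it remains negative but its magnitude also shifts; there is no symmetry between these effects, and moreover the factor $1-\phi_1$ multiplying the kernel is an arbitrary $[0,1]$-valued function of $\mu$, so you cannot exploit any geometric symmetry of the kernel alone. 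For $\|x\|$ of order $\kappa^{-1}$ the sliver $A^{(1)}\cap B_\kappa^c$ can swallow the whole disk $A^{(1)}$, and the $x$-decay of $q_{\kappa,\epsilon}(x,s)$ at such scales (governed by $\beta_\kappa \approx (s+\|x\|^2\kappa)/2$, hence contributing only $O(1)$ extra decay) does not by itself compensate the $K_{\kappa,\epsilon}\sim\kappa^{-1}$ blow-up. A uniform dominating function is therefore not available from the ingredients you list, and the Taylor-expansion remark does not supply one.

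The paper's argument sidesteps continuity entirely. It first extracts from the known smallness of $N^{(1)}_{0,\epsilon}$ on $C^c$ not merely that $N^{(1)}_{0,\epsilon}$ is near zero but a quantitative $L^2$-type bound on $\upsilon_1(x,s,\lambda)=\int_{A^{(1)}}(1-\phi_1)\,d\mu$: a convexity argument shows $N^{(1)}_{0,\epsilon}$ is bounded above by a negative multiple of $\int_{C^c}(\cdots)\,\upsilon_1^2$, so that integral is itself smaller than any prescribed $\delta$. It then bounds $N^{(1)}_{\kappa,\epsilon}$ above by an expression linear in $\upsilon_1$ and linear in a geometric factor of size $O(\kappa\|x\|\sqrt{s})$, and applies H\"older's inequality to split this into the square root of the $\upsilon_1^2$-integral (small, controlled by $\delta$) times the square root of a $\phi_1$-free integral, which after the substitution $y=x\sqrt{\kappa/(1+\kappa)}$ is bounded uniformly in $\kappa$ via Lemma~\ref{bigint}. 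The essential idea you are missing is that the $\kappa=0$ analysis delivers control of $\upsilon_1$ in $L^2(C^c)$, and this $L^2$ control---not continuity in $\kappa$---is what tames $N^{(1)}_{\kappa,\epsilon}$.
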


\begin{proof}
We first derive a convenient upper bound for $N_{0,\epsilon}^{(1)}$.
Noting that $\phi_0(x,s,\mu,\lambda) = 1$ whenever 
$\mu \in A^{(1)}(x,s)$, we
can write $N_{0,\epsilon}^{(1)}$ as the integral over  $C^c$ of
\begin{align*}
q_{0,\epsilon}(x,s)\; p_{0,\epsilon}(\lambda\|x,s)\
	\Bigg( & r_0(c s/m|\lambda) \upsilon_1(x,s,\lambda) 
	\\
 	&\;\;\;- 
	\int\limits_{A^{(1)}(x,s)}
		p_{0,\epsilon}(\mu\|x,s,\lambda) 
		(1 - \phi_1(x,s,\mu,\lambda)) \; d \mu \Bigg)
\end{align*}
with respect to $x$, $s$ and $\lambda$,
where $\upsilon_1(x,s,\lambda) = 
\int_{A^{(1)}(x,s)} (1-\phi_1(x,s,\mu,\lambda)) \; d \mu$.
For each $(x,s,\lambda)$, choose $c_1 = c_1(x,s,\lambda)$ so that
$\pi c s/m  - \pi c_1 s/m = \upsilon_1(x,s,\lambda)$, and note
that $0 \leq c_1 \leq c$. 
We obtain that
the expression in the preceding display is bounded from above by
\begin{align*}
q_{0,\epsilon}(x,s)\; p_{0,\epsilon}(\lambda\|x,s)\
	\Bigg( & r_0(c s/m|\lambda) \upsilon_1(x,s,\lambda) 
	\\
 	&\;\;\;- 
	\int\limits_{c_1 s/m \leq \|\mu-x\|^2 \leq c s/m}
		p_{0,\epsilon}(\mu\|x,s,\lambda) 
		 \; d \mu \Bigg)
\end{align*}
(by recalling  that $p_0(\mu\|x,s,\lambda)$ is radially
symmetric in $\mu$ around $x$ and decreasing in $\|\mu-x\|$,  by picturing
the set $A^{(1)}(x,s)$ as a subset of the plane,
and by a little reflection).
Using the results in Section~\ref{S4.2}, we see that
$$
q_{0,\epsilon}(x,s) \; p_{0,\epsilon}(\lambda\|x,s) \;
p_{0,\epsilon}(\mu\|x,s,\lambda) \quad=\quad
\frac{s^{\frac{m}{2}-1} \lambda^\frac{m}{2}}{\pi 2^{\frac{m}{2}+1}}
e^{-\frac{\lambda}{2}( s + \|\mu-x\|^2)} \{\lambda > \epsilon\}.
$$
Recalling that $r_0(c s/m|\lambda)$ equals
$p_{0,\epsilon}(\mu\|x,s,\lambda)$ evaluated at $\|\mu-x\|^2 = c s/m$,
we can write the upper bound in the second-to-last display  as
$$
\frac{s^{\frac{m}{2}-1}\lambda^\frac{m}{2}}{ \pi 2^{\frac{m}{2}+1}}
\int\limits_{ c_1 s/m \leq \|\mu-x\|^2\leq c s/m}
\left(
e^{-\frac{\lambda}{2}(s + c s/m) }
-
e^{-\frac{\lambda}{2}(s + \|\mu-x\|^2) }
\right) \; d \mu \;\{\lambda>\epsilon\}.
$$
Because the exponential function is convex, we obtain that
the upper bound in the preceding display is bounded from above by
\begin{align*}
\frac{s^{\frac{m}{2}-1} \lambda^{\frac{m}{2}+1}}{\pi 2^{\frac{m}{2}+2}}
e^{-\frac{\lambda}{2}( s + c s/m)}
\int\limits_{ c_1 s/m \leq \|\mu-x\|^2\leq c s/m}
\left( \|\mu-x\|^2 - c s/m \right) 
\; d \mu \;\{\lambda>\epsilon\}.
\end{align*}
Evaluating the integral in the preceding display  
using standard methods, we obtain that
$$
N_{0,\epsilon}^{(1)} \quad\leq \quad
-\iiint \limits_{C^c}
\frac{s^{\frac{m}{2}-1} \lambda^{\frac{m}{2}+1}}{ \pi^2 2^{\frac{m}{2}+3}}
\;
e^{-\frac{\lambda}{2}(s + c s/m)} \;
\upsilon_1^2(x,s,\lambda)
\;\{\lambda>\epsilon\}\;
d \lambda \; d x \; d s.
$$
Note that this upper bound is non-positive.
Recalling that $N_{0,\epsilon}^{(1)}$ can be made arbitrarily close to
zero by choosing $C$ sufficiently large, we obtain, for each
$\delta>0$, that
\begin{equation}\label{tmp5}
\iiint \limits_{C^c}
\frac{s^{\frac{m}{2}-1} \lambda^{\frac{m}{2}+1}}{ \pi^2 2^{\frac{m}{2}+3}}
\;
e^{-\frac{\lambda}{2}(s + c s/m)} \;
\upsilon_1^2(x,s,\lambda)
\;\{\lambda>\epsilon\}\;
d \lambda \; d x \; d s
\quad< \quad \delta
\end{equation}
provided only that  $C$ is sufficiently large.

In the next step, we derive an upper bound 
for $N_{\kappa,\epsilon}^{(1)}$ for
$\kappa>0$, by arguments similar to those used in the preceding paragraph.
Let $c_2=c_2(x,s,\lambda,\kappa)$ and $c_3 = c_3(x,s,\kappa)$ be such that
$\sqrt{c_3 s/m} = \sqrt{c s/m} + \|x\| \kappa/(1+\kappa)$ and such that
$\pi c_3 s/m - \pi c_2 s/m = \upsilon_1(x,s,\lambda)$. 
The quantity $N_{\kappa,\epsilon}^{(1)}$
is bounded from above by the integral over $C^c$ of
\begin{align*}
q_{\kappa,\epsilon}(x,s)\; p_{\kappa,\epsilon}(\lambda\|x,s)\
	\Bigg( & r_\kappa(c s/m|\lambda) 
	\upsilon_1(x,s,\lambda) 
	\\
 	&\;\;\;- 
	\int\limits_{c_2 s/m \leq \|\mu-\mu_\kappa\|^2 \leq c_3 s/m}
		p_{\kappa,\epsilon}(\mu\|x,s,\lambda) 
		 \; d \mu \Bigg),
\end{align*}
because $p_{\kappa,\epsilon}(\mu\|x,s)$ is radially symmetric in $\mu$
around $\mu_\kappa$ and decreasing in $\|\mu-\mu_\kappa\|$.
Again using the formulas in Section~\ref{S4.2},
we can write the expression in the 
preceding display as
\begin{align*}
&(1+\kappa)
\frac{s^{\frac{m}{2}-1} \lambda^{\frac{m}{2}}}{\pi 2^{\frac{m}{2}+1}}
\;\times
\\&\qquad
\int\limits_{ \frac{c_2 s}{m} \leq \|\mu-\mu_\kappa\|^2\leq\frac{c_3 s}{m}}
\left(
e^{-\frac{\lambda}{2}( 2 \beta_\kappa+(1+\kappa) \frac{c s}{m})}
-
e^{-\frac{\lambda}{2}( 2 \beta_\kappa+(1+\kappa) \|\mu-\mu_\kappa\|^2)}
\right)
\; d \mu \;\{\lambda>\epsilon\}.
\end{align*}
In the preceding display, the integrand 
is increasing in $\|\mu-\mu_\kappa\|^2$, so that the integral
is bounded from above by
\begin{align*}
&\int\limits_{\frac{c_2 s}{m} \leq \|\mu-\mu_\kappa\|^2\leq \frac{c_3 s}{m}}
\left(
e^{-\frac{\lambda}{2}( 2 \beta_\kappa+(1+\kappa) \frac{c s}{m})}
-
e^{-\frac{\lambda}{2}( 2 \beta_\kappa+(1+\kappa) \frac{c_3 s}{m})}
\right)\; d \mu
\\
&\leq\quad 
(1+\kappa)\frac{\lambda}{2}
e^{-\frac{\lambda}{2}( 2 \beta_\kappa+(1+\kappa) \frac{c s}{m})}
\left( \frac{c_3 s}{m} - \frac{ c s}{m}\right)
\;\upsilon_1(x,s,\lambda),
\end{align*}
where the inequality is obtained by using convexity of the exponential
function.
By the arguments presented so far, we see that
\begin{equation}\label{tmp6}
\begin{split}
N_{\kappa,\epsilon}^{(1)}\quad\leq \quad
(1+\kappa)^2 \iiint\limits_{C^c}
&
\frac{s^{\frac{m}{2}-1} \lambda^{\frac{m}{2}+1}}{\pi 2^{\frac{m}{2}+2}}
\;
e^{-\frac{\lambda}{2}(2 \beta_\kappa+(1+\kappa)\frac{c s}{m}) }\;
\\
&\;\;
\left( \frac{c_3 s}{m} - \frac{ c s}{m}\right) \;
\upsilon_1(x,s,\lambda) \; \{\lambda>\epsilon\}
\; d \lambda \; d x \; d s
\end{split}
\end{equation}
for each $\kappa>0$.

We now combine
the upper bound on $N_{\kappa,\epsilon}^{(1)}$
given in \eqref{tmp6}
with 
\eqref{tmp5}. To this end, we use H\"older's inequality
(where the reference measure 
has the density 
$s^{\frac{m}{2}-1} \lambda^{\frac{m}{2}+1} \exp( -\frac{\lambda}{2}
(2\beta_\kappa + (1+\kappa) \frac{c s}{m})$
on $C^c$)
and the fact that the exponential function is monotone
to conclude that $N_{\kappa,\epsilon}^{(1)}$ is bounded from above by
\begin{equation} \nonumber
\begin{split}
(1+\kappa)^2 
\left( \iiint \limits_{C^c}
\frac{s^{\frac{m}{2}-1}\lambda^{\frac{m}{2}+1}}{\pi 2^{\frac{m}{2}+2}}
e^{-\frac{\lambda}{2} (s + \frac{c s}{m})}
\upsilon_1^2(x,s,\lambda)
\;\{\lambda>\epsilon\}\; d \lambda \; d x \; d s
\right)^{1/2}
 \\ 
\quad 
\left( \iiint\limits_{C^c}
\frac{s^{\frac{m}{2}-1}\lambda^{\frac{m}{2}+1}}{\pi 2^{\frac{m}{2}+2}}
e^{- \lambda \beta_\kappa}
\left(\frac{c_3 s}{m} - \frac{ c s}{m}
\right)^2 \; \{\lambda>\epsilon\}\; d \lambda \; d x \; d s
 \right)^{1/2}
\end{split}
\end{equation}
for each $\kappa>0$.
This upper bound is the product of three factors. The first one
is smaller than, say, $2$ provided that $\kappa$ is sufficiently small.
In view of \eqref{tmp5} and
for fixed $\delta>0$, 
the second one is smaller than $(2 \pi \delta)^{1/2}$
provided that $C$ is sufficiently large.
To bound the third factor, we extend the integral over
the whole space $\R^2\times(0,\infty)^2$ and note that,
in the resulting upper bound, the innermost integral, i.e.,
$\int_{\epsilon}^\infty \lambda^{m/2+1}e^{-\lambda \beta_\kappa}\;d\lambda$,
equals $\Gamma(m/2+2, \epsilon\beta_\kappa)/\beta_\kappa^{m/2+2}$.
Moreover, setting $\rho = \kappa/(1+\kappa)$ and noting that
$0<\rho<1$, we have
$(c_3 s/m - c s/m)^2 = (2 \sqrt{ c s/m} \|x\|\rho + 
			\|x\|^2\rho^2)^2 
		\leq \rho ( 2 \sqrt{ c s/m } \|x\| \sqrt{\rho}
				+ \|x\|^2 \rho)^2$.
Using this inequality to further bound the resulting upper bound,
substituting $y$ for $x \sqrt{\rho}$,  and simplifying, we see that
the third factor in the preceding display is bounded from above by
the square root of
$$
\frac{1}{\pi}
\int\limits_{\R^2}
\int_0^\infty
	s^{\frac{m}{2}-1}
	\frac{\Gamma(\frac{m}{2}+2, \frac{\epsilon}{2} (s+\|y\|^2))}{
		(s+\|y\|^2)^{\frac{m}{2}+2}}
	\left(
		2 \sqrt{\frac{c s}{m}} \|y\| + \|y\|^2
	\right)^2
\; d s
\; d y.
$$
The expression in the preceding display is bounded by a finite constant
that we denote by $\Lambda^2$, in view of Lemma~\ref{bigint}.
[To apply the lemma, substitute $t$ for $s \epsilon/2$,
substitute $z$ for $y \sqrt{\epsilon/2}$, and expand the square
in the integrand. This results in a sum of three integrals,
where each is finite by Lemma~\ref{bigint}.]
Taken together, we see that  $N_{\kappa,\epsilon}^{(1)}$ is 
bounded from above by
$2 ( 2 \pi \delta)^{1/2}\Lambda$. The proof is completed by appropriate
choice of $\delta$.
\end{proof}

\begin{lemma}
\label{N2}
Under the assumptions of Proposition~\ref{PRD2} with $p=2$,
we have $N_{\kappa,\epsilon}^{(2)} < \Delta/6$ provided only that
$C\subseteq \R^2\times (0,\infty)^2$ is sufficiently large,  
and that $\kappa$ is sufficiently small and positive
(where $\Delta$ and $\epsilon$ are as in Proposition~\ref{PRDa2}).
\end{lemma}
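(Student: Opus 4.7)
The plan is to mirror the strategy of Lemma~\ref{N1}, adapted to the annulus $A^{(2)}(x,s)$. Since $\phi_0\equiv 0$ on $A^{(2)}$, the conditional integrand of $N^{(2)}_{\kappa,\epsilon}$ simplifies to $(p_{\kappa,\epsilon}(\mu\|x,s,\lambda)-r_\kappa(cs/m\|\lambda))\phi_1$, which at $\kappa=0$ is non-positive throughout $A^{(2)}$ (as $\mu_0=x$ and $\|\mu-x\|^2\geq cs/m$ there). Hence $N^{(2)}_{0,\epsilon}\leq 0$, and by Proposition~\ref{PRDa2} its magnitude is smaller than any prescribed $\delta>0$ for $C$ sufficiently large. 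Write $\upsilon_2(x,s,\lambda):=\int_{A^{(2)}(x,s)}\phi_1(x,s,\mu,\lambda)\,d\mu$; this plays the role $\upsilon_1$ did in Lemma~\ref{N1}, and note $\upsilon_2\leq\pi s/m$.

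For the analog of \eqref{tmp5}, rearrange at $\kappa=0$ by placing $\phi_1$-mass on the inner sub-annulus of $A^{(2)}$ (where $r_0(cs/m\|\lambda)-p_{0,\epsilon}$ is smallest); this upper-bounds $\int_{A^{(2)}}(p_{0,\epsilon}-r_0)\phi_1\,d\mu$ by $\int_{cs/m\leq\|\mu-x\|^2<c_2' s/m}(p_{0,\epsilon}-r_0(cs/m\|\lambda))\,d\mu$, with $c_2'$ determined by $\pi(c_2'-c)s/m=\upsilon_2$. Computing this integral exactly in polar coordinates, then using $y-1+e^{-y}\geq y^2 e^{-y}/2$ (valid for $y\geq 0$) with $y=\lambda\upsilon_2/(2\pi)$, and bounding $\upsilon_2\leq\pi s/m$ to absorb the $\upsilon_2$-dependent exponential into the $s$-exponent, produces a clean $\upsilon_2^2$-lower bound that yields
\begin{equation*}
\iiint_{C^c}\frac{s^{m/2-1}\lambda^{m/2+1}}{\pi^2\,2^{m/2+3}}\,e^{-\lambda s(m+c+1)/(2m)}\,\upsilon_2^2\,\{\lambda>\epsilon\}\,d\lambda\,dx\,ds\;<\;\delta.
\end{equation*}
For $\kappa>0$, decompose $A^{(2)}=A^{(2),+}_\kappa\cup A^{(2),-}_\kappa$ with the lune $A^{(2),-}_\kappa:=\{\mu\in A^{(2)}:\|\mu-\mu_\kappa\|^2<cs/m\}$; the contribution from $A^{(2),+}_\kappa$ is non-positive and is discarded. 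On the lune, combine the convexity bound $p_{\kappa,\epsilon}-r_\kappa\leq\tfrac{(1+\kappa)\lambda}{2}\,p_{\kappa,\epsilon}(cs/m-\|\mu-\mu_\kappa\|^2)$, the triangle-inequality estimate $cs/m-\|\mu-\mu_\kappa\|^2\leq 2d\sqrt{(c+1)s/m}$ with $d=\kappa\|x\|/(1+\kappa)$, the pointwise bound $p_{\kappa,\epsilon}(\mu\|x,s,\lambda)\leq\tfrac{(1+\kappa)\lambda}{2\pi}e^{-(1+\kappa)\lambda(\sqrt{cs/m}-d)^2/2}$ valid on the lune, and $\int_{A^{(2),-}_\kappa}\phi_1\,d\mu\leq\upsilon_2$, to obtain an inequality of the form $N^{(2)}_{\kappa,\epsilon}\leq\iiint_{C^c}W_\kappa(x,s,\lambda)\,d\sqrt{s}\,\upsilon_2\,\{\lambda>\epsilon\}\,d\lambda\,dx\,ds$ for an explicit weight $W_\kappa$, giving the analog of \eqref{tmp6}.

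Applying H\"older's inequality with reference weight equal to the one in the displayed inequality above, the first factor is bounded by $\sqrt{\delta}$; the second factor $\sqrt{\iiint_{C^c}W_\kappa^2\,d^2 s/w}$ has an $s$-exponent that telescopes to $-\lambda s(m-1+c+2\kappa c)/(2m)$, hence negative (and integrable) since $m+c>1$, and, after the substitution $y=x\sqrt{\rho}$ with $\rho=\kappa/(1+\kappa)$, acquires Gaussian decay in $y$ together with a $d^2 = \rho\|y\|^2$ prefactor that precisely cancels the Jacobian $\rho^{-1}dy$ to produce a finite integral of the type treated by Lemma~\ref{bigint}, uniformly bounded in $\kappa\in(0,1]$. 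Choosing $\delta$ small (hence $C$ large) and then $\kappa$ small yields $N^{(2)}_{\kappa,\epsilon}<\Delta/6$. The main technical obstacle will be carrying the $\kappa$- and $d$-dependent correction terms from the convexity and triangle-inequality estimates through the H\"older calculation without destroying the negativity of the $s$-exponent in the second factor, and in particular avoiding the naive replacement $r_0(c_2's/m\|\lambda)\geq r_0((c+1)s/m\|\lambda)$ in Step~2, which would instead force the restrictive condition $m>c+1$.
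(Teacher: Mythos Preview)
Your approach differs from the paper's in an interesting way, but it contains a genuine gap.

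\textbf{The gap.} Your pointwise bound $p_{\kappa,\epsilon}(\mu\|x,s,\lambda)\leq\frac{(1+\kappa)\lambda}{2\pi}e^{-(1+\kappa)\lambda(\sqrt{cs/m}-d)^2/2}$, claimed to be ``valid on the lune'', is false whenever $d=\kappa\|x\|/(1+\kappa)\geq\sqrt{cs/m}$. Indeed, if $\sqrt{cs/m}\leq d\leq\sqrt{(c+1)s/m}$ then $\mu_\kappa$ itself lies in the lune $A^{(2),-}_\kappa$, and at $\mu=\mu_\kappa$ the density equals $\frac{(1+\kappa)\lambda}{2\pi}$, strictly larger than your bound. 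More generally, on the lune one only has $\|\mu-\mu_\kappa\|\geq\max\{\sqrt{cs/m}-d,0\}$, so the bound holds only on $D_\kappa^c=\{(x,s,\lambda):d<\sqrt{cs/m}\}$. This matters: as you yourself observe, it is precisely this pointwise bound that secures the negativity of the $s$-exponent in the second H\"older factor; without it the argument collapses (and the naive replacement $p_{\kappa,\epsilon}\leq\frac{(1+\kappa)\lambda}{2\pi}$ leads back to the restriction $m>c+1$ you were trying to avoid). The paper excises exactly this region $D_\kappa$ and treats its contribution $N_{\kappa,\epsilon}^{(2,1)}$ separately by a crude area bound together with Lemma~\ref{lemmaD}; you need to do the same.

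\textbf{Comparison with the paper.} Modulo the $D_\kappa$ excision, your route is genuinely different and in one respect cleaner. The paper's analogue of your $\kappa=0$ bound, inequality \eqref{tmp7}, carries the \emph{variable} exponent $e^{-\lambda(s+c_1s/m)/2}$ with $c_1=c_1(x,s,\lambda)\in[c,c+1]$, whereas your use of $y-1+e^{-y}\geq\tfrac{1}{2}y^2e^{-y}$ together with $\upsilon_2\leq\pi s/m$ produces the \emph{fixed} exponent $e^{-\lambda s(m+c+1)/(2m)}$. This is what allows your single H\"older step to succeed on $D_\kappa^c$: writing $w$ for your fixed-exponent weight, the ratio $W_\kappa^2d^2s/w$ carries the exponent $-\lambda\bigl[s(m+c-1)/(2m)+\kappa(\|x\|-\sqrt{cs/m})^2\bigr]$, and after $y=x\sqrt{\rho}$ the second factor reduces to an integral of Lemma~\ref{bigint} type, finite uniformly in small $\kappa$. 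The paper instead applies H\"older with reference weight equal to that of \eqref{N2.2}; the variable $c_1$ in \eqref{tmp7} then obstructs control of the first factor unless $\lambda s$ is bounded, which forces the further split into $E_l=\{\lambda s\leq l\}$ and $E_l^c$ and a separate convexity argument for the latter. Your fixed-exponent trick eliminates that second split, at the cost of the slightly more delicate inequality at $\kappa=0$.
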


\begin{proof}
The proof relies on ideas similar to those used
earlier in the proof of Lemma~\ref{N1}, and on some
additional considerations to deal with issues that do not
occur in the preceding proof.
Again, we first obtain an upper bound for $N_{0,\epsilon}^{(2)}$:
Since $\phi_{0,\epsilon}(x,s,\mu,\lambda)=0$ 
for $\mu \in A^{(2)}(x,s)$, we can write
$N_{0,\epsilon}^{(2)}$ as the integral over $C^c$ of
\begin{align*}
q_{0,\epsilon}(x,s) \;p_{0,\epsilon}(\lambda\|x,s) \;\Bigg(
\int\limits_{A^{(2)}(x,s)} 
&
	p_{0,\epsilon}(\mu\|x,s,\lambda) \phi_1(x,s,\mu,\lambda)\; d \mu
\\
&
\quad
\;\;-\;\; 
r_0( c s/m|\lambda) \upsilon_2(x,s,\lambda)
\Bigg),
\end{align*}
where 
$\upsilon_2(x,s,\lambda) = 
	\int_{A^{(2)}(x,s)} \phi_1(x,s,\mu,\lambda) \; d \mu$.
Now choose $c_1 = c_1(x,s,\lambda)$ so that
$\pi c_1 s/m - \pi c s/m = \upsilon_2(x,s,\lambda)$, and note that
$c\leq c_1 \leq c+1$.  
With this, the expression in the preceding display is bounded by
the following sequence of expressions.
\begin{align*}
& 
	q_{0,\epsilon}(x,s) \;p_{0,\epsilon}(\lambda\|x,s)\;
	\Bigg(
	\int\limits_{ \frac{c s}{m} \leq \|\mu - x\|^2\leq\frac{c_1 s}{m}}
	p_{0,\epsilon}(\mu\|x,s,\lambda) \; d \mu
\\
&\qquad\qquad\qquad\qquad\qquad\qquad\qquad
	\;\;-\;\; r_0( c s/m|\lambda) \upsilon_2(x,s,\lambda)
	\Bigg)
\\
&= \quad 
	\frac{s^{\frac{m}{2}-1} \lambda^\frac{m}{2}}{
		\pi 2^{\frac{m}{2}+1}}
	\int\limits_{ \frac{c s}{m} \leq \|\mu - x\|^2 \leq\frac{c_1 s}{m}}
	\left(
	e^{-\frac{\lambda}{2}(s+\|\mu-x\|^2)}
	-
	e^{-\frac{\lambda}{2}(s+\frac{c s}{m})}
	\right)
	\; d \mu \;\{\lambda>\epsilon\}
\\
&\leq \quad
	\frac{s^{\frac{m}{2}-1} \lambda^{\frac{m}{2}+1}}{
		\pi 2^{\frac{m}{2}+2}}
	e^{-\frac{\lambda}{2}(s+\frac{c_1 s}{m})}
	\int\limits_{ \frac{c s}{m} \leq \|\mu - x\|^2 \leq\frac{c_1 s}{m}}
	\left(
	\frac{c s}{m} - \|\mu-x\|^2 
	\right)
	\; d \mu\;\{\lambda>\epsilon\}
\\
&= \quad
	- \frac{s^{\frac{m}{2}-1} \lambda^{\frac{m}{2}+1}}{
		\pi^2 2^{\frac{m}{2}+3}}
	e^{-\frac{\lambda}{2}(s+\frac{c_1 s}{m})}
	\upsilon_2^2(x,s,\lambda)\;\{\lambda>\epsilon\}
	.
\end{align*}
In this sequence of expressions, the
first one is an upper bound of the expression in the second-to-last display,
because $p_{0,\epsilon}(\mu\|x,s,\lambda)$ 
is radially symmetric around $x$ and monotone in $\|\mu-x\|$;
the first equality is derived by plugging-in the
formulas from Section~\ref{S4.2};
the first inequality is derived by first using the convexity
and then the monotonicity of the exponential function;
and the last equality is obtained by elementary integration.
For fixed $\delta>0$, recall that we have 
$-\delta < N_{0,\epsilon}^{(2)} \leq 0$
provided that $C$ is sufficiently large.
Because $N_{0,\epsilon}^{(2)}$ is bounded from 
above by the integral over $C^c$
of the expression at the far right-hand side of the preceding display,
it follows that
\begin{equation}\label{tmp7}
	\iiint\limits_{C^c}
	 \frac{s^{\frac{m}{2}-1} \lambda^{\frac{m}{2}+1}}{
		\pi^2 2^{\frac{m}{2}+3}}
	\;
	e^{-\frac{\lambda}{2}(s+\frac{c_1 s}{m})}
	\;
	\upsilon_2^2(x,s,\lambda)
	\;
	\{\lambda>\epsilon\}
	\; d \lambda \; d x \; d s
	\quad<\quad \delta,
\end{equation}
provided only that $C$ is sufficiently large.

To bound $N_{\kappa,\epsilon}^{(2)}$ from above,
let $c_2  = c_2(x,s,\kappa)$ and $c_3 = c_3(x,s,\lambda,\kappa)$ be so that
$\sqrt{c_2 s/m} = \max\{ \sqrt{c s/m} - \|x\|\kappa/(1+\kappa), 0\}$
and so that $\pi c_3 s/m - \pi c_2 s/m = \upsilon_2(x,s,\lambda)$.
With this, $N_{\kappa,\epsilon}^{(2)}$ is bounded from above 
by the integral
over $C^c$ of
\begin{align}
\nonumber
&q_{\kappa,\epsilon}(x,s) \; p_{\kappa,\epsilon}(\lambda\|x,s)\;\Bigg(
  \int\limits_{ \frac{c_2 s}{m} 
  \leq \|\mu-\mu_\kappa\|^2 \leq \frac{ c_3 s}{m}}
  p_{\kappa,\epsilon}(\mu\|x,s,\lambda) \; d \mu 
\\
\nonumber
&\qquad\qquad\qquad\qquad\qquad\qquad\qquad\qquad\qquad
- r_\kappa\left(\left.\frac{c s}{m}\right|\lambda\right)
  \upsilon_2(x,s,\lambda)
  \Bigg)
\\
\
\label{N2.3}
&=\quad
 \frac{1+\kappa
}{
	\pi 2^{\frac{m}{2}+1}}
s^{\frac{m}{2}-1 } \lambda^{\frac{m}{2}}
\int\limits_{ \frac{c_2 s}{m} \leq \|\mu-\mu_\kappa\|^2\leq\frac{c_3 s}{m}}
\Big(
e^{-\frac{\lambda}{2}( 2 \beta_\kappa+(1+\kappa)\|\mu-\mu_\kappa\|^2)}
\\
\nonumber
&\qquad\qquad\qquad\qquad\qquad\qquad\qquad\qquad\qquad
-
e^{-\frac{\lambda}{2}( 2 \beta_\kappa+(1+\kappa)\frac{c s}{m})}
\Big)
\; d \mu \; \{\lambda > \epsilon\}
\\
\nonumber
&\leq\quad
 \frac{1+\kappa
}{
	\pi 2^{\frac{m}{2}+1}}
s^{\frac{m}{2}-1 } \lambda^{\frac{m}{2}}
\left(e^{-\frac{\lambda}{2}( 2 \beta_\kappa+(1+\kappa)\frac{c_2 s}{m})}
-
e^{-\frac{\lambda}{2}( 2 \beta_\kappa+(1+\kappa)\frac{c s}{m})}
\right)
\upsilon_2(x,s,\lambda)
\; \{\lambda > \epsilon\}
\\
\label{N2.2}
&\leq\quad
 \frac{(1+\kappa)^2
}{
	\pi 2^{\frac{m}{2}+2}}
s^{\frac{m}{2}-1 } \lambda^{\frac{m}{2}+1}
e^{-\frac{\lambda}{2}( 2 \beta_\kappa+(1+\kappa)\frac{c_2 s}{m})}
\left(
\frac{c s}{m} - \frac{c_2 s}{m}
\right)
\upsilon_2(x,s,\lambda)
\; \{\lambda > \epsilon\}.
\end{align}
Here, the upper bound follows 
because $p_{\kappa,\epsilon}(\mu\|x,s,\lambda)$ is radially
symmetric around $\mu_\kappa$ and monotone in $\|\mu-\mu_\kappa\|$;
the equality is obtained by plugging-in the formulae
from Section~\ref{S4.2} and simplifying;
and the two inequalities are obtained by using the
monotonicity and the convexity of the exponential function,
respectively.

Up to this point
the proof has proceeded  similarly to the proof of
Lemma~\ref{N1}. But now we find that the inequality in \eqref{tmp7}
and the upper bound for $N_{\kappa,\epsilon}^{(2)}$ 
that can be obtained from the preceding paragraph can not
be combined as in the proof of Lemma~\ref{N1}.
A more detailed analysis appears to be in order.
To this end, we  decompose the range of $(x,s,\lambda)$, i.e., 
$\R^2 \times (0,\infty)^2$, as
$$
D_\kappa \;\;\cup \;\;
\left(D_\kappa^c\cap E_{l}\right) \;\;\cup \;\;
\left(D_\kappa^c \cap E_{l}^c \right) 
$$
for 
$D_\kappa = \{(x,s,\lambda): 
	\|x\| \kappa/(1+\kappa) \geq \sqrt{ c s/m}\}$, and
$E_{l} = \{(x,s,\lambda):  
	s \lambda  \leq l \}$, where $l$ will be chosen later.
This corresponds to the decomposition
$N_{\kappa,\epsilon}^{(2)} =  
N_{\kappa,\epsilon}^{(2,1)} + 
N_{\kappa,\epsilon}^{(2,2)} + 
N_{\kappa,\epsilon}^{(2,3)}$,
where $N_{\kappa,\epsilon}^{(2,i)}$ is defined as 
$N_{\kappa,\epsilon}^{(2)}$ but
with the set $C^c$ replaced by the
intersection of $C^c$ and the $i$-th set in the union in the
preceding display, $i=1,2,3$.

To bound $N_{\kappa,\epsilon}^{(2,1)}$ we first note that
$N_{\kappa,\epsilon}^{(2,1)}$ is bounded by the integral
of \eqref{N2.3} over $C^c \cap D_\kappa$ or over the larger
set $D_\kappa$.
Moreover, the integral in \eqref{N2.3} is bounded by
$\exp(-\lambda\beta_\kappa) \upsilon_2(x,s,\lambda)$,
because the exponential function is positive and
monotone. Since $\upsilon_2(x,s,\lambda)$ is bounded by the measure of
$A^{(2)}(x,s)$, i.e, $\upsilon_2(x,s,\lambda) \leq \pi s/m$, it is
straight-forward to see that
$N_{\kappa,\epsilon}^{(2,1)}$ is bounded by 
\begin{align*}
&
\frac{1+\kappa}{m 2^{\frac{m}{2}+1}}
\iint\limits_{ \|x\|\frac{\kappa}{1+\kappa}  \geq \
\sqrt{ \frac{c s}{m}}}
s^\frac{m}{2}
\int_\epsilon^\infty
\lambda^\frac{m}{2} e^{-\lambda \beta_\kappa}
\; d\lambda \; d x \; d s
\\
&=\quad
\frac{1+\kappa}{m}
\iint\limits_{ \|x\|\frac{\kappa}{1+\kappa}  \geq \
\sqrt{ \frac{c s}{m}}}
s^\frac{m}{2}
\frac{ \Gamma\left(\frac{m}{2}+1, \epsilon \beta_\kappa\right)}{
	(2 \beta_\kappa)^{\frac{m}{2}+1}}
\;d x \; d s
\\
&=\quad
\frac{ 2 (1+\kappa)^2}{m \epsilon\kappa}
\iint \limits_{\|z\|^2 \frac{m}{c} \frac{\kappa}{1+\kappa} > t}
t^\frac{m}{2} \frac{ \Gamma\left(\frac{m}{2}+1, t+\|z\|^2\right)}{
	(t+\|z\|^2)^{\frac{m}{2}+1}}
\; d z \; d t,
\end{align*}
where the last equality is obtained by substituting
$t$ for $s \epsilon/2$ and
by substituting $z$ for $x \sqrt{ \kappa\epsilon/( 2 (1+\kappa)}$.
Using Lemma~\ref{lemmaD}, it is now easy to see
that the upper bound in the 
preceding display converges to zero as $\kappa\to 0$.
In particular, we see for each $\delta>0$ that 
\begin{equation}\label{boundN21}
N_{\kappa,\epsilon}^{(2,1)}  \quad <\quad\delta
\qquad \qquad \text{if $\kappa < \kappa_1(\delta,\epsilon,m,c)$},
\end{equation}
where $\kappa_1(\delta,\epsilon,m,c)$ is an appropriate positive constant
depending only on the indicated quantities.

Next, $N_{\kappa,\epsilon}^{(2,2)}$
is bounded by the integral of \eqref{N2.2} over
$C^c \cap D_\kappa^c\cap E_{l}$. Using H{\"o}lder's inequality,
we see that
$N_{\kappa,\epsilon}^{(2,2)}$ is bounded by
\begin{align*}
\frac{(1+\kappa)^2}{\pi 2^{\frac{m}{2}+2}}
&
\left(\;\;
\iiint \limits_{C^c \cap D^c_\kappa \cap E_{l}}
s^{\frac{m}{2}-1} \lambda^{\frac{m}{2}+1}
e^{-\frac{\lambda}{2}( 2\beta_\kappa + (1+\kappa) \frac{c_2 s}{m})}
\upsilon_2^2(x,s,\lambda)
\;\{\lambda>\epsilon\}
\; d \lambda \; d x \; d s
\right)^\frac{1}{2}
\\
&
\left(\;\;
\iiint \limits_{C^c \cap D^c_\kappa \cap E_{l}}
s^{\frac{m}{2}-1} \lambda^{\frac{m}{2}+1}
e^{-\frac{\lambda}{2}( 2\beta_\kappa + (1+\kappa) \frac{c_2 s}{m})}
\left(\frac{c s}{m} - \frac{c_2 s}{m}\right)^2
\;\{\lambda>\epsilon\}
\; d \lambda \; d x \; d s
\right)^\frac{1}{2}
.
\end{align*}
The expression in the preceding display is the product of three factors.
The first factor is smaller than, say, $2$ if
$\kappa$ is sufficiently small, e.g., $\kappa < \kappa_2$,
for some $\kappa_2 = \kappa_2(m)$.
Concerning the  second factor, we first note that,
for $(x,s,\lambda) \in E_{l}$, the exponential function in 
the integrand satisfies
\begin{align*}
&e^{-\frac{\lambda}{2}(2 \beta_\kappa+(1+\kappa) \frac{c_2 s}{m})}
\quad\leq \quad e^{-\frac{\lambda}{2} s}
\quad\leq\quad e^{\frac{l}{2} \frac{c+1}{m} }
e^{-\frac{\lambda}{2} (s+\frac{c_1 s}{m})}
\end{align*}
because $l \geq \lambda s$ on $E_{l}$,
and because $c_1 \leq c+1$.
In view of this, \eqref{tmp7} entails that the second factor in the 
preceding display is bounded by
$(\delta 2^{m/2+3} \exp( l (c+1) / (2 m)))^{1/2}$, provided that
$C$ is sufficiently large.
In the integrand of the third factor, 
note that the exponential is bounded by $\exp(-\lambda \beta_\kappa)$, 
and that $(c s/m - c_2 s/m) = 2 \sqrt{ c s/m} \|x\|\kappa/(1+\kappa)
	- (\|x\| \kappa/(1+\kappa))^2$
because $(x,s,\lambda)\in D^c_\kappa$.
The arguments used to deal with the corresponding factor
at the end of the proof of Lemma~\ref{N1},
together with Lemma~\ref{bigint},
entail that the third factor in the preceding display
is bounded by a finite constant that we denote by $\Lambda$,
even if the integral is extended over the whole space
$\R^2 \times (0,\infty)^2$.
In summary, we see that 
\begin{equation}\label{boundN22}
N_{\kappa,\epsilon}^{(2,2)} \quad <\quad 
2 \; 
\left(
	\delta 
	2^{\frac{m}{2}+3}
	e^{l\frac{c+1}{2 m}}
	\right)^{\frac{1}{2}} \;
\Lambda
\end{equation}
if $\kappa < \kappa_2(m)$ and  if $C$ is sufficiently large.

Lastly, $N_{\kappa,\epsilon}^{(2,3)}$ is bounded by 
the integral of \eqref{N2.3} over $C^c \cap D_\kappa^c \cap E_{l}^c$.
Upon setting
$$
F(u) \quad=\quad -\frac{2 }{1+\kappa} \lambda^{-1}
	e^{-\frac{\lambda}{2}(2\beta_\kappa + (1+\kappa) \frac{s}{m} u)},
$$
it is elementary to verify that the expression in \eqref{N2.3}
can be written as
\begin{align}\label{N2.4}
&(1+\kappa) \frac{s^{\frac{m}{2}-1} \lambda^{\frac{m}{2}}}{
	2^{\frac{m}{2}+1}}
	\Big(
		F\left( c_3 \right)
		-
		F\left( c_2 \right)
		-
		F'\left( c \right)
		\left(
			c_3 -
			c_2 
		\right)
	\Big) \; \{\lambda>\epsilon\}.
\end{align}
In \eqref{N2.4}, the factor in parentheses involving the function $F$
can be written as
\begin{align*}
&
F\left(  c_3\right) - F\left( c \right)
	-F'\left( c \right)
	\left( c_3 - c \right)
\\
& + \quad
F\left( c \right) - F\left( c_2 \right)
	-F'\left( c \right)
	\left( c - c_2 \right).
\end{align*}
In the preceding display, the expression in the first line
is non-positive, because $F(\cdot)$ is concave. And, again using
the concavity of $F(\cdot)$ and simplifying, 
the expression in the second line
is bounded from above by
\begin{align*}
&
\left(
	F'\left( c_2 \right)
	-F'\left( c  \right)
\right)
	\left( c - c_2 \right)
\quad\leq\quad
- \left( c  - c_2 \right)^2
	F^{''}\left( c_2 \right),
\end{align*}
where the inequality follows from the convexity of $F'(\cdot)$.
Using this to bound \eqref{N2.4} from above, and 
plugging-in the explicit formula for $F^{''}(\cdot)$, we obtain that
$N_{\kappa,\epsilon}^{(2)}$ is bounded from above by
$$
\frac{(1+\kappa)^2}{2^{\frac{m}{2}+2}}
\iiint\limits_{C^c\cap D_\kappa^c \cap E_{l}^c}
s^{\frac{m}{2}-1} \lambda^{\frac{m}{2}+1}
e^{-\frac{\lambda}{2}(2 \beta_\kappa + (1+\kappa)\frac{c_2 s}{m})}
\left( \frac{c s}{m} - \frac{c_2 s}{m}\right)^2
\;\{\lambda > \epsilon\}
\; d \lambda\; d x \; d s.
$$
In the integrand of the third factor, 
note that the exponential is bounded by $\exp(-\lambda \beta_\kappa)$, 
and that $(c s/m - c_2 s/m) = 2 \sqrt{ c s/m} \|x\|\kappa/(1+\kappa)
	- (\|x\| \kappa/(1+\kappa))^2$
because $(x,s,\lambda)\in D^c_\kappa$.
The arguments used to deal with the corresponding factor
at the end of the proof of Lemma~\ref{N1},
Lemma~\ref{bigint}, and the monotone convergence theorem
entail that for each $\delta>0$ we have
\begin{equation} \label{boundN23}
N_{\kappa,\epsilon}^{(2,3)} \quad < \quad \delta
\end{equation}
irrespective of $\kappa$ and $C$, provided that $l$ is sufficiently large,
e.g., $l>l_0(\delta,\epsilon,m,c)$.

To put the pieces together, fix
$l$ sufficiently large 
so that $N_{\kappa,\epsilon}^{(2,3)} < \Delta/18$; cf. \eqref{boundN23}.
Then choose $C$ sufficiently large and $\kappa_2$  sufficiently small,
so that
$N_{\kappa,\epsilon}^{(2,2)} < \Delta/18$ whenever $\kappa < \kappa_2$; 
cf. \eqref{boundN22}.  Lastly, choose $\kappa_1$ sufficiently small, so that
$N_{\kappa,\epsilon}^{(2,1)} < \Delta/18$ whenever $\kappa < \kappa_1$;
cf. \eqref{boundN21}.
	It follows that $N_{\kappa,\epsilon}^{(2)} < \Delta/6$
	provided that $\kappa < \min\{\kappa_1,\kappa_2,\kappa_3\}$
and that $C$ is sufficiently large.
\end{proof}

\begin{lemma}
\label{N3}
Under the assumptions of Proposition~\ref{PRD2} with $p=2$,
we have $N_{\kappa,\epsilon}^{(3)} < \Delta/6$ provided only that
$\kappa$ is sufficiently small and positive
(where $\Delta$ is as in Proposition~\ref{PRDa2}).
\end{lemma}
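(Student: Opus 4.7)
My plan is to bound $N_{\kappa,\epsilon}^{(3)}$ directly, without any choice of the set $C$, by showing that this quantity vanishes as $\kappa\to 0$. The key observation is that $\phi_0\equiv 0$ on $A^{(3)}(x,s)$, so the innermost integrand in the definition of $N_{\kappa,\epsilon}^{(3)}$ reduces to $-(r_\kappa(cs/m\|\lambda)-p_{\kappa,\epsilon}(\mu\|x,s,\lambda))\phi_1(x,s,\mu,\lambda)$. Since $p_{\kappa,\epsilon}(\mu\|x,s,\lambda)=r_\kappa(\|\mu-\mu_\kappa\|^2\|\lambda)$ is radially decreasing in $\|\mu-\mu_\kappa\|$, this expression is non-positive outside the ball $B_\kappa:=\{\mu:\|\mu-\mu_\kappa\|^2<cs/m\}$. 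Bounding the remaining positive part by $p_{\kappa,\epsilon}(\mu\|x,s,\lambda)\phi_1\le p_{\kappa,\epsilon}(\mu\|x,s,\lambda)$ and enlarging the integration region from $C^c$ to $\R^2\times(0,\infty)^2$, I obtain the majorant
\[
N_{\kappa,\epsilon}^{(3)}\;\le\; \iiiint \mathbb{1}\{\mu\in A^{(3)}(x,s)\cap B_\kappa\}\;q_{\kappa,\epsilon}(x,s,\mu,\lambda)\;dx\,ds\,d\mu\,d\lambda.
\]

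To estimate this majorant, I factor the joint density as $q_{\kappa,\epsilon}(x,s,\mu,\lambda)=q_{\kappa,\epsilon}(\mu,\lambda)\,p(x\|\mu,\lambda)\,p(s\|\lambda)$ and integrate $(x,s)$ out for fixed $(\mu,\lambda)$. Substituting $W=\sqrt{\lambda}(x-\mu)\sim N(0,I_2)$ and $V=\lambda s\sim\chi^2_m$, the event $\{\mu\in A^{(3)}(x,s)\cap B_\kappa\}$ becomes $\{\|W\|^2\ge (c+1)V/m\}\cap\{\|W-M\|^2<cV(1+\kappa)^2/m\}$ with $M=\kappa\sqrt{\lambda}\,\mu$. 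By the triangle inequality, this intersection is empty unless $V\le V_0:=\|M\|^2 m/(\sqrt{c+1}-(1+\kappa)\sqrt{c})^2$. Bounding the conditional probability of the resulting lune by the volume of the smaller ball $B(M,(1+\kappa)\sqrt{cV/m})$ times the supremum of the $N(0,I_2)$ density over $\{\|W\|^2\ge (c+1)V/m\}$, and then integrating against the $\chi^2_m$ density on $(0,V_0)$, one arrives at
\[
P_{\mu,\lambda}\bigl(A^{(3)}\cap B_\kappa\bigr)\;\le\; C_{c,m}\,V_0^{m/2+1}\;=\;C'_{c,m}\,\bigl(\kappa^2\lambda\|\mu\|^2\bigr)^{m/2+1}
\]
for finite constants $C_{c,m}$, $C'_{c,m}$ depending only on $c$ and $m$.

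Integrating this bound against $q_{\kappa,\epsilon}(\mu,\lambda)=\Gamma(m/2)(1+\kappa)\lambda^{-1}e^{-\lambda\kappa\|\mu\|^2/2}\{\lambda>\epsilon\}$, passing to polar coordinates in $\mu$, and substituting $\tau=\lambda\kappa\|\mu\|^2/2$, a short computation shows that the Jacobian factor $\kappa^{-(m+4)/2}$ combines with the factor $\kappa^{m+2}$ from $V_0^{m/2+1}$ to leave exactly $\kappa^{m/2}$, while the elementary factorizing integrals $\int_0^\infty \tau^{(m+2)/2}e^{-\tau}\,d\tau=\Gamma(m/2+2)$ and $\int_\epsilon^\infty\lambda^{-2}\,d\lambda=1/\epsilon$ are finite. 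This yields $N_{\kappa,\epsilon}^{(3)}\le C''_{c,m}\,\kappa^{m/2}/\epsilon$, which tends to zero as $\kappa\to 0$. Since $\Delta>0$ by Proposition~\ref{PRDa2}, the conclusion $N_{\kappa,\epsilon}^{(3)}<\Delta/6$ follows for all sufficiently small $\kappa>0$.

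The main obstacle will be carrying out the lune probability estimate in the second step in rigorous quantitative form (handling the three geometric regimes where the two balls defining the intersection are nested, partially overlapping, or disjoint), in the spirit of the case-split used in the proof of Lemma~\ref{N2}. The key conceptual point is that the triangle-inequality constraint $V\le V_0$ forces $\|\mu\|^2$ to be of order $1/\kappa^2$, equivalently, that the substitution $\tau=\lambda\kappa\|\mu\|^2/2$ produces a factor $\tau^{(m+2)/2}e^{-\tau}$ with a uniformly integrable density in $\tau$; this is what supplies the saving $\kappa^{m/2}$ which overcomes the $\kappa^{-1}$-divergence of the total mass of $q_{\kappa,\epsilon}(\mu,\lambda)$ as $\kappa\to 0$.
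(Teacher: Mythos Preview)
Your approach is correct and reaches the same conclusion as the paper, but the computation is organised differently. Both arguments start from the observations that $\phi_0\equiv 0$ on $A^{(3)}(x,s)$ and that the integrand is non-positive outside the ball $\{\|\mu-\mu_\kappa\|^2<cs/m\}$, and both then use the triangle inequality to show that the remaining ``lune'' $A^{(3)}(x,s)\cap B_\kappa$ is empty unless a certain size constraint holds. The paper integrates out $\mu$ first, obtaining an integral over $(x,s,\lambda)$ restricted to $\{\|x\|\kappa/(1+\kappa)\ge h\sqrt{s}\}$, and then reduces this to the same substitution already used for $N_{\kappa,\epsilon}^{(2,1)}$, invoking Lemma~\ref{lemmaD}. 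You instead integrate out $(x,s)$ first (exploiting the Gaussian/$\chi^2$ conditional laws given $(\mu,\lambda)$), obtain the constraint $V\le V_0\propto\kappa^2\lambda\|\mu\|^2$, and finish by integrating against the explicit prior $q_{\kappa,\epsilon}(\mu,\lambda)$. Your route is self-contained (no appeal to Lemma~\ref{lemmaD}) and yields the sharper explicit rate $O(\kappa^{m/2})$, at the price of carrying a few more substitutions by hand.

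One remark: the ``main obstacle'' you flag in the last paragraph is not actually an obstacle. The lune estimate requires no case split over geometric regimes; the single inequality
\[
P\bigl(\|W\|^2\ge (c+1)V/m,\ \|W-M\|^2<cV(1+\kappa)^2/m\bigr)
\;\le\;
\pi\,\tfrac{c(1+\kappa)^2V}{m}\cdot \tfrac{1}{2\pi}e^{-(c+1)V/(2m)}
\]
(area of the smaller ball times the supremum of the $N(0,I_2)$ density on the exterior of the larger one) already suffices, together with the emptiness of the lune for $V>V_0$. You should also record that the constant in $V_0$ involves $(\sqrt{c+1}-(1+\kappa)\sqrt{c})^{-2}$, which is bounded only for $\kappa$ below a threshold depending on $c$; this is harmless for the statement but should be made explicit.
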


\begin{proof}
Because $\phi_0(x,s,\mu,\lambda)=0$ if $\mu \in A^{(3)}(x,s)$,
we can write $N_{\kappa,\epsilon}^{(3)}$ as
the integral over $C^c$ of
\begin{align*}
&\int\limits_{ A^{(3)}(x,s)}
\Big( p_{\kappa,\epsilon}(\mu\|x,s,\lambda) 
	- r_\kappa(c s/m|\lambda)\Big)
\phi_1(x,s,\mu,\lambda)\; d \mu
\\
&\quad\quad\times\quad
\; p_{\kappa,\epsilon}(\lambda\|x,s) 
\; q_{\kappa,\epsilon}(x,s).
\end{align*}
In the preceding display, the integrand 
is negative whenever $\|\mu-\mu_\kappa\| >  c s/m$. Therefore,
$N_\kappa^{(3)}$ is bounded from above by
the expression in the preceding display with $A^{(3)}(x,s)$ replaced
by
$B(x,s,\kappa) = \{ \mu:\;  \|\mu-x\|^2 \geq (c+1) s/m \text{ and }
	\|\mu-\mu_\kappa\|^2 \leq c s/m\}$.

Note that $B(x,s,\kappa)$ is empty if
$\|x\|\kappa/(1+\kappa) < h \sqrt{s}$ for some $h>0$.
To see this, suppose that $\mu$ satisfies $\|\mu-\mu_\kappa\|^2 \leq c s/m$.
Then
\begin{align*}
\sqrt{ \frac{c s}{m}} 
&\quad \geq \quad \|\mu - \mu_\kappa\| \quad=\quad \|(\mu-x)+(x-\mu_\kappa)\|\\
&\quad \geq \quad \left|
	\|\mu-x\| - \|x\|\frac{\kappa}{1+\kappa}
	\right|
	\quad \geq \quad 
	\|\mu-x\| - \|x\|\frac{\kappa}{1+\kappa},
\end{align*}
so that
\begin{align*}
\|\mu-x\| 
&\quad \leq \quad \sqrt{\frac{c s}{m}} + \|x\|\frac{\kappa}{1+\kappa} 
\\
&\quad=\quad \sqrt{\frac{(c+1) s}{m}} \;
	\sqrt{\frac{c}{c+1}}\;
	 \left( 1 + 
		\frac{\|x\|}{\sqrt{s}} \frac{\kappa}{1+\kappa} 
		\sqrt{\frac{m}{c}}\right).
\end{align*}
The expression on the far right-hand side of the preceding display
equals $\sqrt{ (c+1) s/m}$ multiplied by a factor that is
smaller than one provided that $( \|x\|/\sqrt{s}) \kappa/(1+\kappa) $
is sufficiently small, e.g., provided that
$\|x\|\kappa/(1+\kappa) < h \sqrt{s}$ for an appropriate positive
constant $h$ that depends only on $c$ and $m$.

In view of the considerations in the two preceding paragraphs, we see
that $N_{\kappa,\epsilon}^{(3)}$ is bounded from above by
\begin{align*}
& \iiint \limits_{ \|x\|\frac{\kappa}{1+\kappa} \geq h \sqrt{s}}
\;\;
\int\limits_{B(x,s,\kappa)}
\Big( p_{\kappa,\epsilon}(\mu\|x,s,\lambda) - 
	r_\kappa(c s/m|\lambda)\Big)
\phi_1(x,s,\mu,\lambda)\; d \mu
\\
&\qquad\qquad
p_{\kappa,\epsilon}(\lambda\|x,s)\;
q_{\kappa,\epsilon}(x,s)\; d\lambda \; d x \; d s
\\
&\quad\leq \quad 
 \iiint \limits_{ \|x\|\frac{\kappa}{1+\kappa} \geq h \sqrt{s}}
2 p_{\kappa,\epsilon}(\mu_\kappa\|x,s,\lambda) 
\;
\pi \frac{ c s}{m} 
\;
p_{\kappa,\epsilon}(\lambda\|x,s)\;
q_{\kappa,\epsilon}(x,s)  
\; d \lambda \; d x \; d s
\\
&\quad = \quad (1+\kappa)\frac{ c}{m} 2^{-\frac{m}{2}}
\iint \limits_{ \|x\|\frac{\kappa}{1+\kappa} \geq h \sqrt{s}}
 s^{\frac{m}{2}} 
\int\limits_\epsilon^\infty 
 \lambda^\frac{m}{2} e^{-\lambda \beta_\kappa}
 \; d\lambda
 \; d x \; d s,
\end{align*}
where the inequality follows upon noting that 
$p_{\kappa,\epsilon}(\mu\|x,s,\lambda)$ is maximized at $\mu=\mu_\kappa$
and hence also that
$r_\kappa(c s/m|\lambda) < 
	p_{\kappa,\epsilon}(\mu_\kappa\|x,s,\lambda)$,
that $\phi_1(x,s,\mu,\lambda)\leq 1$,
and that the area of $B(x,s,\kappa)$ is bounded by $\pi c s/m$,
and where the equality is obtained from the results in
Section~\ref{S4.2} and from elementary simplifications.
It now follows that the expression on the far right-hand side of
the preceding display is smaller that $\Delta/6$ provided only
that $\kappa$ is sufficiently small, by arguing as in the proof
of Lemma~\ref{N2}; cf.  the derivation of the upper bound for 
$N_{\kappa,\epsilon}^{(2,1)}$.
\end{proof}

\begin{proof}[Proof of Proposition~\ref{PRD2}]
Recall that we decomposed the quantity of interest as
$Q_{\kappa,\epsilon}(L_\kappa(\phi_1)) - Q_{\kappa,\epsilon}(L_\kappa(\phi_0)) =
M_{\kappa,\epsilon}-N_{\kappa,\epsilon}^{(1)} - 
	N_{\kappa,\epsilon}^{(2)} - N_{\kappa,\epsilon}^{(3)}$.
The result now follows immediately from the lemmata~\ref{M} 
through \ref{N3}.
\end{proof}

\section{Auxiliary results}
\label{AD}
\label{aux}

Consider a version of
spherical coordinates (cf., say, \citealp{Blu60a}) to re-parameterize
the $(p+1)$-vector
$(t,z')' \in (0,\infty)\times \R^p$ as
$(t,z')' = \varphi(r,\theta_1,\dots, \theta_p)$, where the
$j$-th coordinate the $\varphi(\cdots)$-function is given by
$$
\varphi_j(r,\theta_1,\dots, \theta_p) \quad=\quad
\begin{cases}
r^2 \cos^2 (\theta_1) 
	&\text{ for $j=1$, }\\
r \left(\prod_{i=1}^{j-1} \sin(\theta_i)\right)
	\cos(\theta_j) 
	&\text{ for $1 <  j \leq p$, and}\\
r   \prod_{i=1}^{p} \sin(\theta_i)
	&\text{ for $j=p+1$,}
\end{cases}
$$
and where the arguments of the $\varphi(\cdots)$-function
satisfy $0< r$,
$0\leq \theta_1 < \pi/2$,
$0\leq \theta_i < \pi$ whenever $1< i < p$, and
$0 \leq \theta_p < 2 \pi$.
The determinant of the Jacobian of $\varphi(\cdots)$ is given by
$2 r^{p+1} \cos(\theta_1) \prod_{j=1}^p\sin^{p-j}(\theta_j)$ (which can
be derived by a simple induction).

\begin{lemma} 
\label{bigint}
Fix an integer $p\geq 1$ and  real numbers $\alpha$, $\beta$ and $\gamma$.
If $\alpha + \beta - \gamma + 1 +p/2>0$ and $\alpha+\beta+1 +p/2>0$, then
$$
\int\limits_{\R^p} \int_0^\infty
t^\alpha \|z\|^{2 \beta} \frac{\Gamma(\gamma, t+\|z\|^2)}{
	(t+\|z\|^2)^\gamma} \; d t \; d z
\quad=\quad
\frac{ \pi^{\frac{p}{2}}}{\alpha+\beta-\gamma+1+\frac{p}{2}}
\frac{ \Gamma(\alpha+1) \Gamma(\beta+\frac{p}{2})}{\Gamma(\frac{p}{2})}.
$$
\end{lemma}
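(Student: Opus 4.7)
The plan is to apply the spherical coordinate change $(t,z') = \varphi(r,\theta_1,\ldots,\theta_p)$ described immediately before the lemma, which collapses $t+\|z\|^2$ to $r^2$ and separates the integral into a radial part and angular parts. The first step is to verify the identity $t+\|z\|^2 = r^2$: since $\varphi_1 = r^2\cos^2\theta_1 = t$ and since, by the standard telescoping $\cos^2\theta_2 + \sin^2\theta_2\cos^2\theta_3 + \cdots = 1$, the sum $\sum_{j=2}^{p+1}\varphi_j^2$ reduces to $r^2\sin^2\theta_1$, we get $t+\|z\|^2 = r^2(\cos^2\theta_1+\sin^2\theta_1) = r^2$ and in particular $\|z\|^{2\beta} = r^{2\beta}\sin^{2\beta}\theta_1$.

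Second, substitute into the integrand and use the stated Jacobian $2r^{p+1}\cos\theta_1\prod_{j=1}^p\sin^{p-j}\theta_j$ to factor the integral as the product of a radial integral
$$\int_0^\infty 2 r^{2\alpha+2\beta-2\gamma+p+1}\,\Gamma(\gamma,r^2)\,dr,$$
a $\theta_1$-integral $\int_0^{\pi/2}\cos^{2\alpha+1}\theta_1 \sin^{2\beta+p-1}\theta_1\, d\theta_1$, and the remaining angular integral $\int\cdots\int\prod_{j=2}^p\sin^{p-j}\theta_j\,d\theta_2\cdots d\theta_p$.

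Third, evaluate each factor in closed form. In the radial integral, substitute $u = r^2$ and integrate by parts against $\Gamma(\gamma,u)$; the boundary terms vanish because the hypotheses $\alpha+\beta-\gamma+1+p/2>0$ and $\alpha+\beta+1+p/2>0$ control the behaviour at $0$ and $\infty$, and the remaining integrand $u^{\alpha+\beta+p/2}e^{-u}$ yields $\Gamma(\alpha+\beta+1+p/2)/(\alpha+\beta-\gamma+1+p/2)$. The $\theta_1$-integral is a Beta integral equal to $\Gamma(\alpha+1)\Gamma(\beta+p/2)/(2\Gamma(\alpha+\beta+1+p/2))$. For the remaining angles, use $\int_0^\pi \sin^k\theta\,d\theta = \sqrt{\pi}\,\Gamma((k+1)/2)/\Gamma((k+2)/2)$ for $j=2,\ldots,p-1$ together with $\int_0^{2\pi}d\theta_p = 2\pi$; the product telescopes to $2\pi^{p/2}/\Gamma(p/2)$. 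Multiplying the three factors, the $\Gamma(\alpha+\beta+1+p/2)$'s cancel and one reads off the claimed formula.

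The only real obstacle is bookkeeping: keeping track of the exponents of $r$, $\cos\theta_1$ and $\sin\theta_1$ that arise when combining $t^\alpha$, $\|z\|^{2\beta}$ and $(t+\|z\|^2)^{-\gamma}$ with the Jacobian, and then verifying that the telescoping product of angular integrals indeed produces $2\pi^{p/2}/\Gamma(p/2)$ (with the boundary cases $p=1,2$, where the product over $j=2,\ldots,p-1$ is empty, handled as degenerate instances). The analytic content --- the vanishing of the boundary terms in the integration by parts, which is where the two positivity hypotheses enter --- is routine once those hypotheses are explicitly invoked.
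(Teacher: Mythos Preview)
Your proof is correct and follows essentially the same route as the paper's own proof: both apply the spherical change of variables $\varphi$, factor the integral into a radial part (evaluated by integration by parts after the substitution $u=r^2$), the $\theta_1$-integral (a Beta integral), and the product of the remaining angular integrals (telescoping to $2\pi^{p/2}/\Gamma(p/2)$). The only cosmetic difference is bookkeeping of constants---the paper absorbs the factor $2$ from the Jacobian and the $2\pi$ from $\theta_p$ into a single prefactor $4\pi$, whereas you keep them separate---and you spell out more explicitly why the two positivity hypotheses kill the boundary terms.
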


\begin{proof}
Using the re-parameterization  $(t,z')'=\varphi(r,\theta_1,\dots,\theta_p)$
introduced earlier, we can write the integral of interest
as
\begin{align*}
& 4 \pi \int_0^\infty 
	r^{2 \alpha+2\beta-2\gamma+p+1} 
	\Gamma\left(\gamma, r^2\right) \; d r
\quad\times\quad
\int_{0}^{\frac{\pi}{2}}
	\sin^{2\beta+p-1}(\theta_1) \cos^{2 \alpha+1}(\theta_1)
	\; d \theta_1 
\\
&\quad \times\quad
\prod_{j=2}^{p-1}\int_{0}^{\pi} \sin^{p-j}(\theta_j)\; d\theta_j.
\end{align*}
In the preceding display, the first integral (with respect to  $r$)
can be computed as 
$2 \Gamma(\alpha+\beta+1+p/2)/(\alpha+\beta-\gamma+1+p/2)$
using integration by parts.
For the remaining integrals, we repeatedly use the identity
$\int_{0}^{\pi/2} \sin^{a-1}(\theta) \cos^{b-1}(\theta) d \theta
= B(a/2, b/2)/2$, which holds provided that $a>0$ and $b>0$;
cf. \citet[Relation 3.621.5]{Gra80c}.
With this, the result follows after elementary simplifications.
\end{proof}

\begin{lemma}\label{lemmaD} 
Fix an integer $p\geq 1$ and a real number $\gamma$.
If $\gamma>(p-2)/2$, then
$$
\lim_{\delta\downarrow 0}
\delta^{\frac{p-2}{2}-\gamma}
\iint \limits_{\substack{ \R^p \; \;(0,\infty)\\ 
			\|z\|^2 \delta > t}}
			t^{\gamma-\frac{p}{2}} 
			\frac{\Gamma(\gamma,t+\|z\|^2)}{
			(t+\|z\|^2)^\gamma}
			\; d t \;d z
\quad=\quad
\frac{2\pi^\frac{p}{2}}{2\gamma+2-p}\frac{\Gamma(\gamma+1)}{\Gamma(p/2)}.
$$
\end{lemma}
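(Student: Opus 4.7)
The plan is to reduce the double integral to a one-dimensional integral in $\delta$ by a sequence of changes of variables, and then to read off the asymptotic behaviour as $\delta\downarrow 0$. Throughout, the hypothesis $\gamma>(p-2)/2$ will be precisely what makes the integral converge at the origin.

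First I would pass to polar coordinates in $z$, writing $r=\|z\|$. Since the integrand depends on $z$ only through $\|z\|^2$, the angular integration contributes the surface area $2\pi^{p/2}/\Gamma(p/2)$ of the unit sphere in $\R^p$, giving
$$
\frac{2\pi^{p/2}}{\Gamma(p/2)}
\int_0^\infty\!\int_0^{r^2\delta} t^{\gamma-p/2}\,
	\frac{\Gamma(\gamma,t+r^2)}{(t+r^2)^\gamma}\; dt\;r^{p-1}\,dr .
$$
Next, I would substitute $t=r^2\delta u$ with $u\in(0,1)$, which exposes the $\delta$-dependence and, after a short simplification, collapses the power of $r$ to $r^{1}$. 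A further substitution $v=r^2(1+\delta u)$ in the innermost integral turns the $r$-integral into $\int_0^\infty \Gamma(\gamma,v)\,dv/(2(1+\delta u))$.

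The integral $\int_0^\infty\Gamma(\gamma,v)\,dv$ equals $\Gamma(\gamma+1)$ by Fubini (swapping the order in $\int_0^\infty\!\int_v^\infty t^{\gamma-1}e^{-t}\,dt\,dv$). What remains, after the final substitution $w=\delta u$, is
$$
\frac{\pi^{p/2}\Gamma(\gamma+1)}{\Gamma(p/2)}
\int_0^\delta \frac{w^{\gamma-p/2}}{(1+w)^{\gamma+1}}\,dw ,
$$
and I would now multiply by $\delta^{(p-2)/2-\gamma}$ and let $\delta\downarrow 0$. Since $\gamma-p/2>-1$ by hypothesis, the integrand is integrable at $0$ and behaves like $w^{\gamma-p/2}$ there, so
$$
\int_0^\delta \frac{w^{\gamma-p/2}}{(1+w)^{\gamma+1}}\,dw
\;=\; \frac{\delta^{\gamma-p/2+1}}{\gamma-p/2+1}\;(1+o(1))
\quad\text{as }\delta\downarrow 0.
$$
Multiplying by $\delta^{(p-2)/2-\gamma}$ cancels the $\delta$-power exactly, leaving $2/(2\gamma+2-p)$, which combined with the prefactor gives the claimed limit.

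The only real obstacle is bookkeeping: keeping the various powers of $r$, $\delta$, and $(1+\delta u)$ straight through three substitutions, and verifying integrability at $w=0$, which is where the hypothesis $\gamma>(p-2)/2$ is used. An application of the dominated (or monotone) convergence theorem — dominating $w^{\gamma-p/2}/(1+w)^{\gamma+1}$ by $w^{\gamma-p/2}$ on any $[0,\delta_0]$ — justifies the asymptotic evaluation rigorously. No deep machinery is needed; everything is elementary once the substitutions are chosen correctly.
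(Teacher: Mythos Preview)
Your argument is correct; the sequence of substitutions checks out and the asymptotic at the end follows from a straightforward squeeze using $(1+w)^{-(\gamma+1)}\to 1$ (note $\gamma+1>0$ since $\gamma>(p-2)/2\geq -1/2$). The only implicit step is the free interchange of the $r$- and $u$-integrations, which is immediate by Tonelli since everything is nonnegative.

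Your route is genuinely different from the paper's. The paper introduces a bespoke spherical-type coordinate system on $(0,\infty)\times\R^p$ that treats $(t,z)$ jointly, so that $t+\|z\|^2$ becomes $r^2$ and the constraint $\|z\|^2\delta>t$ becomes a constraint on a single angular variable $\theta_1$; the integral then factors completely and the $\delta$-dependence is isolated in an angular integral $\int_{\arctan(\delta^{-1/2})}^{\pi/2}\sin^{p-1}\theta_1\cos^{2\gamma+1-p}\theta_1\,d\theta_1$, whose asymptotics are read off via Taylor bounds near $\pi/2$. You instead keep standard polar coordinates in $z$ alone and push the analysis through a chain of scalar substitutions, arriving at the closed form $\frac{\pi^{p/2}\Gamma(\gamma+1)}{\Gamma(p/2)}\int_0^\delta w^{\gamma-p/2}(1+w)^{-(\gamma+1)}\,dw$ for the full integral. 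The paper's coordinate system pays off because it is reused verbatim in the companion Lemma~\ref{bigint}, giving a uniform treatment of both auxiliary integrals; your method is more self-contained and arguably more transparent for this lemma in isolation, since it reduces everything to an explicit one-dimensional integral whose small-$\delta$ behaviour is immediate.
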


\begin{proof}
Re-parameterizing $(t,z')'$ as $\varphi(r,\theta_1,\dots, \theta_p)$,
and noting that the condition $\|z\|^2 \delta > t$
can be re-expressed as $\theta_1 > \arctan( \delta^{-1/2})$,
we can write the integral as
\begin{align*}
&
4 \pi \int_0^\infty r^{2\gamma+1} \frac{\Gamma(\gamma,r^2)}{r^{2\gamma}} 
	\; d r
\quad\times \quad \int_{\arctan(\delta^{-\frac{1}{2}})}^{\frac{\pi}{2}} 
	\sin^{p-1}(\theta_1)
	\cos^{2\gamma+1-p}(\theta_1) \; d \theta_1
\\
&\quad \times \quad \prod_{j=2}^{p-1} \int_0^\pi \sin^{p-j}(\theta_j)\; 
	d\theta_j.
\end{align*}
Arguing as in the proof of Lemma~\ref{bigint}, we see that,
in the preceding display, the first integral equals
$\Gamma(\gamma+1)/2$ and the integral corresponding to the $j$-th
term in the product equals $B((p-j+1)/2, 1/2)$. If we can show
that the second integral divided by $\delta^{\gamma-(p-2)/2}$ converges
to $1/(2\gamma+2-p)$, the result follows after elementary simplifications.

To deal with the second integral in the preceding display, note that
its integrand is bounded from above
by $(\pi/2 - \theta_1)^{2\gamma+1-p}$; and for each $\epsilon>0$
that integrand is bounded from below by
$(1-\epsilon) (\pi/2 - \theta_1)^{2\gamma+1-p}$ provided that
$\theta_1$ is sufficiently close to $\pi/2$, i.e.,  provided that
$\delta$ is sufficiently small.
In view of this, the result follows upon noting that
\begin{align*}
\lim_{\delta\downarrow 0} \delta^{\frac{p-2}{2}-\gamma} 
\int \limits_{\arctan(\delta^{-\frac{1}{2}})}^{\frac{\pi}{2}}
	\left(\frac{\pi}{2} - \theta_1\right)^{2\gamma+1-p} \; d \theta_1
\quad=\quad \frac{1}{2\gamma+2-p}.
\end{align*}
\end{proof}

\begin{lemma} \label{smoments} 
In the setting of Section~\ref{S3}, we have
$$
P_{\kappa,\epsilon}( s^{p/2}) \quad=\quad 
\frac{1}{2} \left(\frac{\epsilon}{2}\right)^{-\frac{p}{2}}
\frac{ \Gamma((m+p)/2)}{\Gamma(p/2)}
$$
for each $\kappa>0$.
\end{lemma}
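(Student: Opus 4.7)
The plan is to compute the expectation by conditioning on $\lambda$ and then integrating against the marginal prior density of $\lambda$, which under the $\NtG(p,0,\kappa,-p/2,0,\epsilon)$-prior turns out to be particularly simple and, notably, independent of $\kappa$.

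First, I would use the tower property to write
\begin{equation*}
P_{\kappa,\epsilon}(s^{p/2}) \quad=\quad P_{\kappa,\epsilon}\Big( P_{\kappa,\epsilon}(s^{p/2}\|\mu,\lambda)\Big).
\end{equation*}
Conditional on $(\mu,\lambda)$, the law of $\lambda s$ is $\chi_m^2$, so
\begin{equation*}
P_{\kappa,\epsilon}(s^{p/2}\|\mu,\lambda) \quad=\quad \lambda^{-p/2}\, \E\big[(\chi_m^2)^{p/2}\big]
\quad=\quad \lambda^{-p/2}\, 2^{p/2}\,\frac{\Gamma((m+p)/2)}{\Gamma(m/2)},
\end{equation*}
using the standard chi-square moment formula. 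This reduces the computation to evaluating $P_{\kappa,\epsilon}(\lambda^{-p/2})$.

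Next I would apply Proposition~\ref{propNtG1} with $(\mu_\circ,\kappa_\circ,\alpha_\circ,\beta_\circ,\epsilon_\circ)=(0,\kappa,-p/2,0,\epsilon)$ to obtain the marginal density of $\lambda$. Since $\alpha_\circ<0$ and $\beta_\circ=0$, the scaling constant equals $C_{p,-p/2,0,\epsilon}=(2\pi)^{-p/2}(p/2)\epsilon^{p/2}$, which yields
\begin{equation*}
p_{\kappa,\epsilon}(\lambda) \quad=\quad \frac{p}{2}\,\epsilon^{p/2}\,\lambda^{-p/2-1}\,\{\lambda>\epsilon\}.
\end{equation*}
Note this does not depend on $\kappa$, which is consistent with the claim. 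A direct integration then gives
\begin{equation*}
P_{\kappa,\epsilon}(\lambda^{-p/2}) \quad=\quad \frac{p}{2}\,\epsilon^{p/2} \int_\epsilon^\infty \lambda^{-p-1}\,d\lambda \quad=\quad \frac{1}{2}\,\epsilon^{-p/2}.
\end{equation*}

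Finally, combining the two pieces yields
\begin{equation*}
P_{\kappa,\epsilon}(s^{p/2}) \quad=\quad 2^{p/2}\,\frac{\Gamma((m+p)/2)}{\Gamma(m/2)}\,\cdot\,\frac{1}{2}\,\epsilon^{-p/2}
\quad=\quad \frac{1}{2}\left(\frac{\epsilon}{2}\right)^{-p/2}\frac{\Gamma((m+p)/2)}{\Gamma(m/2)},
\end{equation*}
as required. There is no real obstacle here; the only subtlety worth double-checking is the correct form of the scaling constant $C_{p,\alpha_\circ,\beta_\circ,\epsilon_\circ}$ in the degenerate case $\beta_\circ=0$, but this is supplied explicitly in Section~\ref{S4.1}.
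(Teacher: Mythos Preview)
Your proof is correct and follows exactly the same two-step approach as the paper: first compute $P_{\kappa,\epsilon}(s^{p/2}\|\lambda)$ via the chi-square moment formula, then integrate against the marginal prior of $\lambda$ from Proposition~\ref{propNtG1} to get $P_{\kappa,\epsilon}(\lambda^{-p/2})=\epsilon^{-p/2}/2$. Note that both your computation and the paper's own proof yield $\Gamma(m/2)$ in the denominator, not the $\Gamma(p/2)$ printed in the lemma statement --- this is evidently a typo in the paper, and your final line is the correct expression.
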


\begin{proof}
Since $s \| \lambda \sim \lambda^{-1} \chi^2_m$, it is easy to see
that $P_{\kappa,\epsilon}( s^{p/2} \| \lambda)$, i.e., 
the conditional mean of $s^{p/2}$
given $\lambda$ under $P_{\kappa,\epsilon}$, equals 
$\lambda^{-p/2} 2^{p/2} \Gamma((m+p)/2)/\Gamma(m/2)$.
And using the marginal density of $\lambda$ under $P_{\kappa,\epsilon}$,
cf. Proposition~\ref{propNtG1},
it is elementary to verify that $P_{\kappa,\epsilon}(\lambda^{-p/2})$
equals $\epsilon^{-p/2}/2$.
\end{proof}

\end{appendix}

{\small
\setlength{\parskip}{0.5 \medskipamount}
\bibliographystyle{plainnat}
\bibliography{lit}
}

\end{document}